\def\thm@space@setup{%
  \thm@preskip=0.5em\thm@postskip=\thm@preskip%
}
\newtheorem*{theorem}{Theorem}
\theoremstyle{plain}
\newtheorem{thm}{Theorem}[section]
\newtheorem{prop}[thm]{Proposition}
\newtheorem{lem}[thm]{Lemma}
\newtheorem{cor}[thm]{Corollary}
\newtheorem{prop/alg}[thm]{Proposition/Algorithm}
\theoremstyle{definition}
\newtheorem{defn}[thm]{Definition}
\newtheorem{eg}[thm]{Example}
\theoremstyle{remark}
\newtheorem{rmk}[thm]{Remark}
\def\A{\mathbb{A}}
\def\Z{\mathbb{Z}}
\def\R{\mathbb{R}}
\def\Q{\mathbb{Q}}
\def\G{\mathbb{G}}
\def\C{\mathbb{C}}
\def\F{\mathbb{F}}
\def\O{\mathcal{O}}
\def\cG{\mathcal{G}}
\def\P{\mathcal{P}}
\def\W{\mathcal{W}}
\def\D{\mathcal{D}}
\def\n{\mathfrak{n}}
\def\gl{\mathfrak{gl}}
\def\so{\mathfrak{so}}
\def\t{\mathfrak{t}}
\def\b{\mathfrak{b}}
\def\g{\mathfrak{g}}
\def\m{\mathfrak{m}}
\def\z{\mathfrak{z}}
\def\l{\mathfrak{l}}
\def\k{\mathfrak{k}}
\def\h{\mathfrak{h}}
\DeclareSymbolFont{cyrletters}{OT2}{wncyr}{m}{n}
\DeclareMathSymbol{\Sha}{\mathalpha}{cyrletters}{"58}
\def\ol{\overline}
\def\ssm{\smallsetminus}
\def\wh{\widehat}
\def\bpm{\begin{pmatrix}}
\def\epm{\end{pmatrix}}
\def\lb{\llbracket}
\def\rb{\rrbracket}
\DeclareMathOperator{\Ram}{Ram}
\DeclareMathOperator{\unr}{unr}
\DeclareMathOperator{\GL}{GL}
\DeclareMathOperator{\Gal}{Gal}
\DeclareMathOperator{\ab}{ab}
\DeclareMathOperator{\Spec}{Spec}
\DeclareMathOperator{\HT}{HT}
\DeclareMathOperator{\Lift}{Lift}
\DeclareMathOperator{\Def}{Def}
\DeclareMathOperator{\Hom}{Hom}
\DeclareMathOperator{\image}{image}
\DeclareMathOperator{\fr}{fr}
\DeclareMathOperator{\Ad}{Ad}
\DeclareMathOperator{\simpc}{sc}
\DeclareMathOperator{\res}{res}
\DeclareMathOperator{\ord}{ord}
\DeclareMathOperator{\Sym}{Sym}
\DeclareMathOperator{\SL}{SL}
\DeclareMathOperator{\SO}{SO}
\DeclareMathOperator{\Spf}{Spf}
\DeclareMathOperator{\pr}{par}
\DeclareMathOperator{\rank}{rank}
\DeclareMathOperator{\rig}{rig}
\DeclareMathOperator{\Rep}{Rep}
\DeclareMathOperator{\Vc}{Vec}
\DeclareMathOperator{\Ext}{Ext}
\DeclareMathOperator{\coker}{coker}
\DeclareMathOperator{\der}{der}
\title[A sparsity of automorphic points]{Galois deformation spaces with a sparsity of automorphic points}
\author{Kevin Childers}
\begin{document}
\maketitle
\begin{abstract}
    Let $k/\F_p$ denote a finite field.  For any split connected reductive group $G/W(k)$ and certain CM number fields $F$, we deform certain Galois representations $\ol\rho:\Gal(\ol F/F) \to G(k)$ to continuous families $X_{\ol\rho}$ of Galois representations $\Gamma_F \to G(\ol\Q_p)$ lifting $\ol\rho$ such that the space of points of $X_{\ol\rho}$ which are geometric (in the sense of the Fontaine-Mazur conjecture) with parallel Hodge-Tate weights has positive codimension in $X_{\ol\rho}$.
    Thus the set of points in $X_{\ol\rho}$ which could (conjecturally) be associated to automorphic forms is sparse.
    This generalizes a result of Calegari and Mazur for $F/\Q$ quadratic imaginary and $G = \GL_2$.
    The sparsity of automorphic points for $F$ a CM field contrasts with the situation when $F$ is a totally real field, where automorphic points are often provably dense.
\end{abstract}
\section{Introduction}\label{intro}

\subsection{Modular Galois representations} Let $\Gamma_\Q$ denote the absolute Galois group of the rational numbers, and $k$ a finite field of characteristic $p$.
Let $c\in\Gamma_\Q$ denote a choice of complex conjugation.
Serre's modularity conjecture \cite{serre}, now a theorem of Khare and Wintenberger proved in \cite{kw1} and \cite{kw2}, states that an absolutely irreducible continuous representation
\[ \ol\rho:\Gamma_\Q \to \GL_2(k) \]
has characteristic 0 lifts which arise from modular forms, provided that $\det\ol\rho(c) = -1$.
We say that a $\ol\rho$ satisfying $\det\ol\rho(c) = -1$ is \textbf{odd}.

In fact, if $\ol\rho$ as above is odd, then \textit{many} characteristic 0 lifts of $\ol\rho$ are modular.
For example, \cite{gm} and \cite{bockle} prove that modular points are dense in the rigid generic fiber of the universal deformation ring.

The situation is quite different if we replace $\Q$ with an imaginary quadratic extension $F/\Q$, where ``oddness'' doesn't even make sense.
On the automorphic side, there are fewer cohomological cuspidal automorphic forms.
One example of this is the vanishing of cuspidal cohomology in non-parallel weight (see e.g.~\cite{harder}, or more generally \cite{bw}).
For another example, compare the results of \cite[Thm.~1.1]{ce} with the dimension growth of classical spaces of cusp forms as the level increases

For an example on the Galois side, see Theorem 1.4 of \cite{even1}.
As another example (more relevant to this paper), \cite[\S7]{cm} produces families of characteristic 0 deformations of representations $\ol\rho:\Gamma_F \to \GL_2(k)$ where the Hodge-Tate weights are allowed to vary, and for which very few points have parallel Hodge-Tate weights.
The object of this paper is to conduct a similar study as \cite[\S7]{cm} for representations into more general reductive groups $G$, and for more general CM number fields $F$.

\begin{rmk}
    The results of \cite{BLGGT}, etc.~produce geometric (potentially automorphic) lifts over CM fields, and in fact \cite{allen} even proves some density results for automorphic points in universal polarized deformation rings.
    However, recall that giving a polarized representation over $F$ a CM field is equivalent to giving an odd representation over the maximal totally real subfield of $F^+$, valued in the group $\cG_n$ defined by Clozel, Harris, and Taylor in \cite{CHT}.
\end{rmk}

\subsection{Oddness in general} 

For producing automorphic (or geometric) lifts in characteristic zero, the ``oddness'' hypothesis appears to be essential.\footnote{\cite{cg} provides non-odd modularity lifting theorems, but does so considering torsion classes.}
Oddness takes the following form for higher rank groups.

Suppose that $G$ is a split connected reductive $W(k)$-group scheme, $T$ is a split maximal torus, $B$ is a fixed Borel containing $T$, and $N$ is the unipotent radical of $B$.
Corresponding Lie algebras are denoted by corresponding lower case Fraktur letters.
Let $G^{\der}$ denote the derived group, $\g^0$ the Lie algebra of $G^{\der}$, and $\h^0 = \h \cap \g^0$ for any Lie subalgebra $\h$ of $\g$.
Suppose $F$ is a number field, and we have a continuous homomorphism
\[ \ol\rho:\Gamma_F \to G(k). \]
As explained in the introduction of \cite{CHT} (also see \S 4.5 of \cite{pat}), the classical Taylor-Wiles method requires
\[ [F:\Q]\dim \n = \sum_{v\mid\infty}h^0(\Gamma_{F_v},\g^0). \]
Here, $h^0$ means the dimension of the corresponding $H^0$, and the set of $k$-points of $\g^0$ is given a $\Gamma_F$-module structure via $\ol\rho$ and the adjoint representation.
If $v\mid\infty$ is a complex place, then 
\[ h^0(\Gamma_{F_v},\g^0) = \dim\g^0  = 2\dim\n + \dim\t^0. \]
If $v\mid\infty$ is a real place, then 
\[ h^0(\Gamma_{F_v},\g^0) \geq \dim\n. \]
So we have
\[ \sum_{v\mid\infty}h^0(\Gamma_{F_v},\g^0) \geq [F:\Q]\dim\n + C\dim\t^0, \]
where $C$ is the number of complex places of $F$.
We see that equality can only be achieved if
\begin{enumerate}
    \item $C = 0$, i.e. $F$ is totally real, and
    \item for each $v\mid\infty$, $h^0(\Gamma_{F_v},\g^0) = \dim\n$, i.e. $(\g^0)^{c_v} = \dim\n$.
    In this case, we say that $\ol\rho(c_v)$ is an odd involution, and that $\ol\rho$ is odd.
    Notice that when $G = \GL_2$, this is equivalent to requiring $\det\ol\rho(c_v) = -1$.
\end{enumerate}

\subsection{Non-odd cases}

In this paper we will focus our attention on Galois representations over CM fields, but there are certainly other ways to get non-odd Galois representations, which are worth mentioning.
For example, $\ol\rho:\Gamma_{\Q} \to \GL_2(k)$ could be \textit{even}, in the sense that $\det\ol\rho(c)) = 1$.
Theorem 1.4 of \cite{even2} shows that such a $\ol\rho$ can have no geometric (and thus no modular) characteristic zero deformations.

Another interesting example can be found in \cite{fkp2}, where $\ol\rho:\Gamma_\Q \to \SO_{2n}(k)$.
Then $\ol\rho$ can \textit{never} be odd, but if $\ol\rho$ is ``as odd as possible'' in the sense that $\so_{2n}^{\Ad\ol\rho(c)}$ is as small as possible, then $\ol\rho\oplus 1:\Gamma_\Q \to \SO_{2n+1}(k)$ can be odd.
In theory, one could then imagine constructing a deformation space $X_{\ol\rho}$ for $\ol\rho$ which is a subvariety of a deformation space $X_{\ol\rho\oplus 1}$ for $\ol\rho\oplus 1$, such that the automorphic points of $X_{\ol\rho}$ are sparse, but the automorphic points of $X_{\ol\rho\oplus 1}$ are dense.

\subsection{The method of Ramakrishna} 

In the case $F = \Q$ and $G = \GL_2$, the numerology has been exploited by the methods of Ramakrishna in \cite{r-lift} and \cite{r-def} to produce characteristic 0 lifts of $\ol\rho$ which are not necessarily automorphic, but which are \textit{geometric}, in the sense of the Fontaine-Mazur conjecture (i.e.~unramified at all but finitely many places, and de Rham at places dividing $p$).
An ``axiomatized'' version of Ramakrishna's method can be found in \cite{taylor-icos}, and this axiomatized version was extended in \cite{pat} and \cite{pat2} to produce geometric lifts for any totally real field $F$ and reductive group $G$.
Even more recently, the forthcoming paper \cite{fkp} has vastly improved Ramakrishna's method in the odd case, where essentially the only hypotheses are oddness, irreducibility, and the existence of local lifts.

In the odd case, the more classical Ramakrishna method produces a space of geometric deformations which is represented by a universal deformation ring which is equal to a power series ring over $\O = W(k)$ (the Witt vectors of $k$) in a number of variables equal to
\[ [F:\Q]\dim\n - \sum_{v\mid\infty}h^0(\Gamma_{F_v},\g^0), \]
which comes out to be 0.
In particular, the universal deformation is a geometric lift of $\ol\rho$.

\subsection{Numerology for CM fields} Suppose that $F$ is a CM field (by which I will mean \textit{nonreal} CM field).
Then we have
\[ \sum_{v\mid\infty}h^0(\Gamma_{F_v},\g^0) = [F:\Q]\dim\n + \frac{[F:\Q]}2\dim\t^0. \]
The numerology breaks down, since we now heuristically expect a universal deformation ring which is a power series ring over $\O$ in
\[ [F:\Q]\dim\n - \sum_{v\mid\infty}h^0(\Gamma_{F_v},\g^0) = -\frac{[F:\Q]}2\dim\t^0 \]
variables.

By relinquishing some control over the Hodge-Tate weights, we will show that we can salvage the method to produce a universal deformation ring $R$ which is a power series ring over $\O$ in
\[ [F:\Q]\dim\b^0 - \sum_{v\mid\infty}h^0(\Gamma_{F_v},\g^0) = +\frac{[F:\Q]}2\dim\t^0 \]
variables.
Then, one can investigate which $\ol\Q_p$-points of the universal deformation ring $R$ could be geometric, automorphic, etc.
Due to the vanishing of cuspidal cohomology in non-parallel weights (which will be explained more in \Cref{symmetry}), automorphic points are expected to have parallel Hodge-Tate weights (which will be defined in \Cref{weights}), and to be geometric in the sense of the Fontaine-Mazur conjecture (see \cite{fm}).
It will be shown that the universal deformation space can be constructed such that the set of $\ol\Q_p$-points which correspond to geometric representations with parallel weights lives in a subvariety of positive codimension (at least in the rigid analytic topology), thus $R$ has (conjecturally) a sparsity of automorphic points.
For examples of conjectures relating Galois representations to automorphic forms, see \cite{bg}.
A sample version of our main theorem is:

\begin{thm}\label{mainthm}
    Let $p$ be a prime number, $k$ a finite field of characteristic $p$, $G/W(k)$ a split connected reductive group, $F$ a Galois CM number field satisfying $[F(\mu_p):F] = p-1$ with maximal totally real subextension $F^+$ for which every prime $v\mid p$ of $F^+$ splits in $F$, and $\Sigma$ a finite set of primes of $F$ containing the primes above $p$.
    Assume that $p \gg 0$ with respect to $G$.
    Assume $\ol\rho:\Gamma_{F,\Sigma}\to G(k)$ is a continuous representation, satisfying the following conditions.
    \begin{enumerate}
        \item $\ol\rho(\Gamma_{F,\Sigma}) \supseteq G^{\der}(k)$.
        \item For all places $v \in \Sigma$ not dividing $p\cdot\infty$, $\ol\rho|_{\Gamma_{F_v}}$ satisfies a liftable local deformation condition $\P_v$ with tangent space of dimension $h^0(\Gamma_{F_v},\g^0)$.
        \item For all places $v\mid p$, $\ol\rho|_{\Gamma_{F_v}}$ is nearly ordinary and non-split (see \Cref{d-non-split}), such that $\alpha\circ\ol\rho$ is not trivial or isomorphic to the cyclotomic character for each simple root $\alpha$ of $G$.
    \end{enumerate}
    Then, letting $r = \frac{[F:\Q]}2\dim\t^0$, there exists a continuous representation
    \[ \rho : \Gamma_F \to G(W(k)\lb X_1,\ldots,X_r\rb) \] lifting $\ol\rho$, and such that $\rho$ is almost everywhere unramified, and nearly ordinary at all $v\mid p$.
    Further, the set of $\ol\Q_p$-points of $\rho$ which are geometric and have parallel Hodge-Tate weights has positive codimension in $\D^r$, the rigid open unit $n$-ball.
\end{thm}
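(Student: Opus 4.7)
The plan is to adapt Ramakrishna's method—in the axiomatized form of \cite{taylor-icos} and its extension to general $G$ and totally real $F$ in \cite{pat}—to the CM setting by substituting a \emph{nearly ordinary varying-weight} local condition at primes above $p$ for the usual fixed-weight crystalline condition. This substitution is what produces the $r$ extra formal parameters: it enlarges the local tangent space at each $v\mid p$ from $[F_v:\Q_p]\dim\n + h^0(\Gamma_{F_v},\g^0)$ to $[F_v:\Q_p]\dim\b^0 + h^0(\Gamma_{F_v},\g^0)$, and the aggregate excess $[F:\Q]\dim\t^0$ at $p$ partially offsets the archimedean deficit of $\tfrac{[F:\Q]}{2}\dim\t^0$ coming from the complex places of $F$, leaving a net $r = \tfrac{[F:\Q]}{2}\dim\t^0$ free parameters.

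Concretely, we would fix local deformation conditions as follows: at finite $v\in\Sigma$ not above $p$, use the liftable $\P_v$ of hypothesis~(2); at $v\mid p$, use the functor parameterizing nearly ordinary lifts with freely varying torus component. Hypothesis~(3)—the non-split hypothesis together with the non-triviality and non-cyclotomy of $\alpha\circ\ol\rho$ for each simple root $\alpha$—ensures this local problem is liftable, with tangent space of the claimed dimension. The big-image hypothesis, combined with $[F(\mu_p):F]=p-1$ and $p\gg 0$, guarantees $H^0(\Gamma_F,\ol\g) = 0 = H^0(\Gamma_F,\ol\g(1))$, so the Wiles--Greenberg formula together with the CM numerology of the introduction gives
\[ \dim H^1_{\L}(\Gamma_{F,\Sigma},\ol\g) - \dim H^1_{\L^\perp}(\Gamma_{F,\Sigma},\ol\g(1)) \;=\; r. \]
Standard Chebotarev-based arguments then produce Ramakrishna primes at which local conditions can be added to kill classes in $H^1_{\L^\perp}$ without altering this difference; enlarging $\Sigma$ to some $\Sigma'$ by finitely many such primes forces $H^1_{\L^\perp}=0$, and hence $\dim H^1_{\L}=r$.

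With dual Selmer vanishing, the usual obstruction argument—lifting step by step from $W_n(k)$ to $W_{n+1}(k)$ against an $H^2_{\L}$ obstruction that vanishes by Poitou--Tate duality—produces the desired $\rho:\Gamma_F\to G(W(k)\lb X_1,\ldots,X_r\rb)$, unramified outside $\Sigma'$ and nearly ordinary at $v\mid p$; the universal deformation ring is formally smooth over $W(k)$ of relative dimension $r$. The $r$ coordinates on $\D^r$ correspond to the freedom in the torus characters at primes above $p$, and the parallel Hodge--Tate weight condition forces the resulting Hodge--Tate cocharacters at different primes to coincide under the natural identifications between embeddings, imposing linear equations that cut out a proper rigid-analytic subvariety. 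As the locus of geometric parallel-weight points is contained in this subvariety, it has positive codimension in $\D^r$.

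The principal obstacle is the Selmer-theoretic step: in the CM case one cannot invoke oddness to absorb the archimedean contribution, so the existence of Ramakrishna primes killing \emph{every} class in the dual Selmer group—while respecting the new varying-weight condition at $p$—must be verified directly from the big-image and non-degeneracy hypotheses. A secondary technical point is liftability and smoothness of the nearly ordinary condition at $v\mid p$: the exclusion of trivial and cyclotomic characters along the simple roots is essential here for the vanishing of the relevant $H^2$ of unipotent deformations.
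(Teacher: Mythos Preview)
Your sketch of the construction of $\rho$ is essentially correct and matches the paper: the nearly ordinary condition with varying torus component at $v\mid p$, the Wiles--Greenberg numerology, and the Ramakrishna-prime argument to kill dual Selmer all proceed as you describe.

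The genuine gap is in the sparsity argument. You assert that the $r$ coordinates on $\D^r$ ``correspond to the freedom in the torus characters at primes above $p$'' and that the parallel-weight condition therefore ``imposes linear equations that cut out a proper rigid-analytic subvariety.'' Neither claim is justified, and the second can fail outright. The $r$ parameters are Selmer coordinates, not torus-character coordinates; the relation between them is mediated by a nontrivial \emph{weight map} $\gamma:\D^r\to\W$ to a weight space $\W$ built from $\O_{F,p}^\times$. Nothing in the construction so far prevents $\gamma$ from landing entirely inside the locally-parallel locus $\W^{\pr}$, in which case your ``proper subvariety'' would be all of $\D^r$.

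The paper handles this in two further steps which you are missing. First, it reduces the question to an \emph{infinitesimal} one: either the set of geometric parallel-weight specializations already has positive codimension, or the image of the tangent-space map
\[
H^1_{\P}(\Gamma_{F,\Sigma},\g^0)\;\longrightarrow\;\prod_{w\mid p} H^1(\Gamma_{F_w},\b^0)\;\longrightarrow\;\prod_{w\mid p} k^{fd}
\]
lies in the ``parallel'' hyperplane $\{\beta_w = -w_0\beta_{\ol w}\}$. (The non-split hypothesis and a Bloch--Kato $H^1_g$ computation are used here to pin down the Weyl-conjugacy ambiguity in the Hodge--Tate cocharacters; without this one would have to avoid $\#W_G$ hyperplanes, which generically span everything.) Second, if the infinitesimal image \emph{does} lie in the parallel hyperplane, the paper adds one more Ramakrishna prime $y$---chosen via a surjectivity statement for the restriction map $H^1(\Gamma_{F,\Sigma\cup T},\g^0)\to\bigoplus_{v\in\Sigma}H^1(\Gamma_{F_v},\g^0)$---to produce a new deformation problem $\P'$ with the same dual-Selmer vanishing and the same dimension $r$, but whose infinitesimal weight image is \emph{not} contained in the parallel hyperplane. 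Only then does the positive-codimension conclusion follow. Your proposal needs both of these ingredients.
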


See \Cref{final-theorem} for slight improvements and for the proof.
This generalizes the results of \S7 of \cite{cm}, which provides a similar result for $F$ a quadratic imaginary extension of $\Q$ and $G = \GL_2$.
The claim of Theorem 7.1 in \cite{cm} is that \textit{finitely} many specializations of $\rho$ have parallel weight, but as explained by David Loeffler in \cite{loeffler}, \cite{cm} doesn't actually prove this.
Finiteness would follow if $\rho$'s parallel weight specializations lived in positive codimension in the \textit{Zariski topology}.  
However, the result does hold in the \textit{rigid analytic topology}, and our proof that the geometric parallel weight specializations live in a positive codimension subvariety will also only be valid in the rigid analytic topology.
We will mention some modifications to the allowable points which do give results in the Zariski topology.

Theorem 8.9 of \cite{cm} makes a similar finiteness claim (on the automorphic side), which Loeffler \cite{loeffler} explains has a similar problem.
However, Serban has proven finiteness results for Bianchi modular forms in \cite{vlad} which complete the proof of \cite[Thm.~8.9]{cm}, and has some generalizations to higher rank.

Due to \cite{scholze}, \cite{hltt}, and many others, we can associate Galois representations to regular algebraic automorphic forms on $\GL_n(\A_F)$ over CM fields $F$.
In cases where local-global compatibility at $\ell=p$ is known, we even know that such Galois representations are geometric with parallel weight (see e.g.~the condition on the Hodge-Tate weights in \cite[Thm.~2.1.1]{BLGGT}).
On the geometric side of things, all pure homological motives $X$ over $F$ satisfy Hodge-symmetry.
The comparison isomorphisms of $p$-adic Hodge theory then imply that the $p$-adic \'etale cohomology of $X$ is geometric of parallel weight.
Call such a Galois representation \textit{motivic}.

\begin{cor}Let $\rho$ satisfy the conclusions of \Cref{mainthm}.
Then the following hold.
    \begin{itemize}
        \item The subset of $\D^r$ whose points correspond to motivic Galois representations has positive codimension.
        \item If $G = \GL_n$, then the subset of $\D^r$ whose points correspond to Galois representations which are known to be associated to automorphic forms has positive codimension.
    \end{itemize}
\end{cor}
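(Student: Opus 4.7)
The plan is to observe that each of the two sets named in the corollary is contained in the locus
\[ \{x \in \D^r(\ol\Q_p) : \rho_x \text{ is geometric with parallel Hodge-Tate weights}\}, \]
which \Cref{mainthm} has already shown has positive codimension in $\D^r$ in the rigid analytic topology. The corollary then follows from these containments with no further codimension estimate.

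For the motivic bullet, given a pure homological motive $X/F$ and an integer $i$, I would note first that the $p$-adic étale cohomology $\rho = H^i_{\mathrm{\'et}}(X_{\ol F}, \ol\Q_p)$ is unramified at all but finitely many places and de Rham at each $v \mid p$, hence geometric in the Fontaine-Mazur sense. Hodge symmetry $h^{p,q}(H^i(X)) = h^{q,p}(H^i(X))$ at every archimedean embedding $F \hookrightarrow \C$, combined with the $p$-adic de Rham comparison isomorphism and the hypothesis that every $v \mid p$ of $F^+$ splits in $F$, translates Hodge numbers at complex-conjugate archimedean embeddings into matching Hodge-Tate weights at the corresponding pair of $p$-adic places. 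The net effect is that the collection of Hodge-Tate weights of $\rho$ is parallel in the sense of \Cref{weights}, so motivic points lie inside the geometric parallel-weight locus of \Cref{mainthm}.

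For the automorphic bullet, when $G = \GL_n$ the constructions of Scholze \cite{scholze}, Harris-Lan-Taylor-Thorne \cite{hltt}, and others attach to any regular algebraic cuspidal automorphic representation $\pi$ of $\GL_n(\A_F)$ a continuous semisimple $\rho_\pi : \Gamma_F \to \GL_n(\ol\Q_p)$ unramified almost everywhere. In the cases where local-global compatibility at $\ell = p$ is known, $\rho_\pi$ is moreover de Rham at each $v \mid p$, with Hodge-Tate weights read off from the infinitesimal character of $\pi_\infty$; in particular $\rho_\pi$ is geometric. Over a CM field $F$, the vanishing of cuspidal cohomology in non-parallel weights (discussed around \Cref{symmetry} and in \cite{harder}, \cite{bw}) forces $\pi$---and therefore, under local-global compatibility, $\rho_\pi$---to have parallel weights, placing such automorphic points inside the same locus.

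The only real obstacle is notational bookkeeping: one must pin down the precise definition of ``parallel Hodge-Tate weights'' from \Cref{weights} and verify that it is exactly what is produced on the one hand by Hodge symmetry of a motive under the splitting hypothesis $v \mid p$ splits in $F/F^+$, and on the other hand by the parallel archimedean weight of a regular algebraic cuspidal $\pi$. Once that dictionary is fixed, both inclusions are tautological and the positive-codimension conclusion is immediate from \Cref{mainthm}.
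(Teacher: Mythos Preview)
Your proposal is correct and takes essentially the same approach as the paper: the corollary is stated without a separate proof environment, since the paragraph immediately preceding it already explains that motivic Galois representations (via Hodge symmetry and the $p$-adic comparison isomorphisms) and automorphic Galois representations for $\GL_n$ (via \cite{scholze}, \cite{hltt}, and local-global compatibility at $\ell=p$ as in \cite[Thm.~2.1.1]{BLGGT}) are geometric with parallel Hodge-Tate weights, whence both sets lie in the positive-codimension locus of \Cref{mainthm}. Your write-up simply spells out this reasoning a bit more explicitly.
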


\Cref{s-example} provides examples where the hypotheses of \Cref{mainthm} are satisfied.
The following is proven as \Cref{c-example}.

\begin{thm}\label{t-example}
    For any split connected reductive group $G/W(k)$ for which $k$ is a finite field of charicteristic $p \gg 0$, there exists a Galois CM field $F$, a set of primes $\Sigma$, and a Galois representation $\ol\rho:\Gamma_{F,\Sigma} \to G(k)$ satisfying the conditions of \Cref{mainthm}.
\end{thm}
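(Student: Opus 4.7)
The plan is to build the triple $(F, \Sigma, \ol\rho)$ in stages. For the CM field, I would choose an imaginary quadratic field $K/\Q$ in which $p$ splits, together with a totally real Galois extension $F^+/\Q$ linearly disjoint from $K(\mu_p)$ over $\Q$, and set $F = KF^+$. Then $F$ is a Galois CM field with maximal totally real subfield $F^+$; since $p$ splits in $K$, every prime of $F^+$ above $p$ splits in $F$; and the linear disjointness gives $F \cap \Q(\mu_p) = \Q$, hence $[F(\mu_p):F] = p-1$. The set $\Sigma$ will be taken to consist of the places above $p\cdot\infty$ together with whatever finite set of auxiliary primes is needed to carry ramification of $\ol\rho$ and of the local twists below.

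To produce a representation with large image, invoke the inverse Galois problem for finite groups of Lie type: for $p \gg 0$, $G^{\der}(k)$ is realizable as the Galois group of an extension $L/F$ unramified outside a finite set $\Sigma_0$ containing the primes above $p\cdot\infty$.  Composing the corresponding surjection $\Gamma_{F,\Sigma_0} \twoheadrightarrow \Gal(L/F)$ with a fixed embedding $\Gal(L/F) \hookrightarrow G(k)$ yields a provisional $\ol\rho_0$ whose image contains $G^{\der}(k)$.  (Alternatively, one could build $\ol\rho_0$ from the mod-$p$ representation attached to a suitable automorphic form or motive and push it into $G$ by functoriality.)  Next, twist $\ol\rho_0$ by a character $\ol\chi:\Gamma_F \to T(k)$, produced from class field theory using the splitting of $p$ in $F$, chosen so that for each simple root $\alpha$ the character $\alpha \circ (\ol\chi\cdot\ol\rho_0)$ is neither trivial nor cyclotomic at each $v\mid p$, and so that the restriction to each decomposition group at $v\mid p$ lies in (a conjugate of) $B(k)$, i.e.\ is nearly ordinary. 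The non-split condition at $v\mid p$ can be secured either by arguing that the decomposition group image already inherits a non-trivial unipotent class from the large global image of $\ol\rho_0$, or by adding auxiliary ramification at a Chebotarev prime in the spirit of \cite{r-lift}.  At each remaining prime in $\Sigma \ssm \{v\mid p\}$, I would impose the unramified or minimally ramified condition, which is liftable and has tangent space of dimension $h^0(\Gamma_{F_v},\g^0)$.

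The chief obstacle is the simultaneous fulfilment of the three conditions at $v\mid p$---near ordinariness, non-split structure, and the avoidance of $1$ and the cyclotomic character by each simple-root character---without shrinking the global image below $G^{\der}(k)$.  Chebotarev density, together with the freedom to twist $\ol\chi$ independently at each $v\mid p$ when $p \gg 0$, should make it possible to thread this needle, since the group $G^{\der}(k)$ is large enough to accommodate many local behaviors at the finitely many primes above $p$.  Nevertheless, the verification that all these local choices can be made compatibly with preservation of the large image is the technical heart of the argument, and is the step I expect to require the most care.
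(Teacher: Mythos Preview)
Your outline has a real gap at the step where you try to arrange the local conditions at $v\mid p$ by twisting.  Multiplying $\ol\rho_0$ by a character $\ol\chi:\Gamma_F\to T(k)$ only gives a homomorphism when $\ol\chi$ is valued in the center $Z(G)(k)$; and twisting by a central character cannot move the image of a decomposition group into a Borel if it was not already there.  So neither near-ordinariness nor the non-split condition at $v\mid p$ can be manufactured after the fact in this way.  Likewise, adding ramification at an auxiliary Chebotarev prime (which lies away from $p$) does nothing to the shape of $\ol\rho|_{\Gamma_{F_v}}$ for $v\mid p$.  The paragraph in which you acknowledge this as ``the chief obstacle'' is exactly where the argument breaks: there is no mechanism in your outline that forces the decomposition-group images at $p$ to have the required upper-triangular, non-split form.

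The paper avoids this problem by building the local representation at $p$ \emph{first} and then globalizing.  Concretely, it uses the principal $\SL_2$ embedding $r:\SL_2\to G$ and a non-split extension $\varrho:\Gamma_{\Q_p}\to\SL_2(k)$ of $\F_p$ by $\F_p(r)$ with $r\neq 0,1$; the composite $\phi_p=r\circ\varrho$ is then automatically nearly ordinary, non-split (its image contains a regular unipotent), and has $\alpha\circ\phi_p=\ol\kappa^{\,r}$ for every simple root $\alpha$.  The globalization is done via Moret-Bailly's theorem (the potential inverse Galois problem \emph{with prescribed local conditions}), which produces a totally real $F^+$ in which $p$ splits completely and an extension $L/F^+$ with $\Gal(L/F^+)\cong G(k)$ whose completions at primes above $p$ are exactly $\phi_p$.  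Base-changing to $F=E\cdot F^+$ for a suitable imaginary quadratic $E$ gives the desired CM field and representation.  The key point you are missing is that one must invoke a result allowing \emph{simultaneous} control of the global Galois group and the local decomposition groups at $p$; an unadorned solution of the inverse Galois problem does not suffice.
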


\subsection{Organization} Section \ref{notation} will set up notation used throughout the remainder of the paper.
Section \ref{deftheory} briefly reviews the necessary theory of Galois deformations, largely following \S\S3-4 of \cite{pat}.
We will also introduce the ``nearly ordinary'' local condition that allows us to adjust the numerology above.
Section \ref{rammethod} describes the necessary alterations to the Ramakrishna method for constructing a universal deformation ring of the desired size.
In \Cref{gl2-weights} we prove the main result of Section 7 in \cite{cm}, addressing some subtleties that do not appear to be addressed there.
\Cref{symmetry} will briefly review the motivic and automorphic picture, to motivate the necessity of ``parallel Hodge-Tate weights.''
\Cref{weights} will generalize the arguments of \Cref{gl2-weights} to more general fields $F$ and groups $G$ to reduce the proof of \Cref{mainthm} to a calculation in Galois cohomology.
Section \ref{sparse} will then provide the necessary Galois cohomology computations to complete the proof of \Cref{mainthm}.
Finally, in Section we prove \Cref{t-example}.

\subsection{Acknowledgements} I am deeply grateful for the encouragement and support of Stefan Patrikis, and the many helpful conversations we had with regards to this paper.
I also thank Frank Calegari and Shiang Tang for their feedback and suggestions.
Finally, I thank my wife Tessa and my children Peter and Amelia for their love, patience, support, and excitement.
\section{Basic notation}\label{notation}

Most of the notation will match that of \cite{pat}.
One notable exception is that this paper will swap the roles of $p$ and $\ell$, which better matches \cite{cm}.

Throughout, $p$ is a fixed odd prime number and $k$ a fixed finite field of characteristic $p$.
Denote the ring of Witt vectors of $k$ by $\O$, and the field of fractions of $\O$ by $E$.

Let $G$ be a split connected reductive $\O$-group scheme.
We will denote the Lie algebra of $G$ by $\g$, and similarly denote Lie algebras of other algebraic groups by using the lower case Fraktur form of the letter.
We write $G^{\der}$ for the derived group of $G$, $\g^0 = [\g,\g]$, and $\h^0 = \h \cap \g^0$ for any Lie subalgebra $\h$ of $\g$.

We will assume that $p$ is a ``very good prime'' for $G$.
For a full definition of this concept see \S1.14 of \cite{carter}. 
The relevant consequences are summarized in \S3 of \cite{pat}.
Most importantly, $\g = \z(\g)\oplus\g^0$, where $\z(\g)$ is the center of $\g$, $\g^0$ is an irreducible representation, and there is a non-degenerate $G$-invariant form $\g^0\times\g^0\to k$, which allows us to fix an identification of $\g^0$ with its linear dual.

$F$ will usually denote a number field unless otherwise noted.
In any case, $\Gamma_F = \Gal(\ol F/F)$ for some fixed algebraic closure $\ol F$ of $F$.
When $F$ is a number field and $\Sigma$ is any set of places of $\Sigma$, $F_\Sigma$ denotes the maximal extension of $F$ inside $\ol F$ which is unramified outside of $\Sigma$, and $\Gamma_{F,\Sigma} = \Gal(F_\Sigma/F)$.
We also fix for each place $v$ of $F$ embeddings $\ol F \to \ol F_v$, and thus embeddings $\Gamma_{F_v} \to \Gamma_F$.

All representations will be assumed continuous.
For a residual representation
\[ \ol\rho : \Gamma_F \to G(k), \]
When we write $\g$ (or any subalgebra) as a $\Gamma_F$-modlue, we mean the $k$-points of $\g$, with $\Gamma_F$-action given by the composition of $\ol\rho$ and the adjoint representation.

Let $\kappa$ denote the $p$-adic cyclotomic character ( we normalize Hodge-Tate weights so that all weights of $\kappa$ are $+1$).
We write $h^i(\cdot)$ to denote the dimensions of cohomology groups $H^i(\cdot)$.
\section{Deformation theory}\label{deftheory}

\subsection{Introduction} Following \cite{pat}, we recall some facts about deformation theory for a Galois representation valued in a general reductive group, then explain the local conditions used in this paper.
Whenever more details or references are desired, see \S\S3-4 of \cite{pat}.

\subsection{Lifts and deformations}
Suppose for now that $\Pi$ is a profinite group satisfying Mazur's $p$-finiteness condition (see \cite{mazur}), and $\ol r:\Pi \to G(k)$.
Let $\mathcal C_\O$ denote the category whose objects are complete local Noetherian $\O$-algebras with residue field $k$, and whose morphisms are local $\O$-algebra maps.
The functor 
\[ \Lift_{\ol r}:\mathcal C_\O \to \text{\textbf{Sets}} \]
defined by
\[ \Lift_{\ol r}(R) = \{\text{representations $\Gamma_F\to G(R)$ lifting $\ol r$} \} \]
is representable by a ring which we write $R^\Box_{\ol r}$.
We say that $r,r' \in \Lift_{\ol r}(R)$ are \textbf{strictly equivalent} if they are conjugate by an element of the kernel of the reduction map
\[\wh G(R) := \ker(G(R) \to G(k)). \]
Note that $\wh G$ is represented by a smooth group scheme over $\O$.
Define a functor
\[ \Def_{\ol r}:\mathcal C_\O \to \text{\textbf{Sets}} \]
by
\[ \Def_{\ol r}(R) = \Lift_{\ol r}(R)/\sim, \]
where $\sim$ denotes strict equivalence.

\subsection{Deformation conditions}
A \textbf{deformation condition} is a representable subfunctor of $\Lift_{\ol r}$ that is closed under strict equivalence.
The representing ring is always a quotient of $R^\Box_{\ol r}$ by an ideal invariant under $\wh G(R^\Box_{\ol r})$.
If $\Lift_{\ol r}^\P$ is a deformation condition, define $\Def_{\ol r}^\P:\mathcal C_\O \to $\textbf{Sets} by
\[ \Def_{\ol r}^\P(R) = \Lift_{\ol r}^\P(R)/\sim. \]
$\Def_{\ol r}^\P$ is not necessarily representable, but will be in the case that $\g^{\ol r(\Pi)} = \z(\g)$.
When $\Def_{\ol r}^\P$ is representable, write $R_{\ol r}^\P$ for the representing ring.

\begin{eg}
    Define $\ab : G \twoheadrightarrow G/G^{\der} =: S$ so be the abelianization map, and fix a lift $\nu:\Pi \to S(\O)$ of $\ab\circ \ol r$.
    Then 
    \[ \Lift_{\ol r}^\nu(R) := \{r \in \Lift_{\ol r}(R) \mid \ab\circ r = \nu\} \]
    defines a deformation condition, analogous to ``fixing the determinant'' in the $G = \GL_n$ case.
\end{eg}

\subsection{The tangent space} Let $\varpi$ denote a uniformizer of $\mathcal O$, $\m_R$ the maximal ideal of $R_{\ol r}^\Box$, and write $k[\epsilon]$ for ring $k[\epsilon]/(\epsilon^2)$.
There are canonical isomorphisms identifying the tangent space at the closed point of $R^\Box_{\ol r}$
\[ \Hom_k(\m_R/(\m_R^2,\varpi),k) \cong \Lift_{\ol r}(k[\epsilon]) \cong Z^1(\Pi,\g), \]
inducing $\Def_{\ol r}(k[\epsilon]) \cong H^1(\Pi,\g)$.

For a deformation condition $\Lift_{\ol r}^\P$ of $\Lift_{\ol r}$, we can identify subspaces $L^{\Box,\P} \subset Z^1(\Pi,\g)$ and $L^\P \subset H^1(\Pi,\g)$ satisfying $\Lift_{\ol r}^\P(k[\epsilon]) \cong L^{\Box,\P}$, $\Def_{\ol r}^\P(k[\epsilon]) \cong L^\P$, and fitting into an exact sequence
\[ 0 \to H^0(\Pi,\g) \to \g \to L^{\Box,\P} \to L^\P \to 0. \]

\subsection{Liftable deformation problems} A surjection $R \to R/I$ is \textbf{small} if $\m_R I = 0$.
A deformation condition $\Lift_{\ol r}^\P$ is \textbf{liftable} if for any small surjection $R \to R/I$, $\Lift_{\ol r}^\P(R) \to \Lift_{\ol r}^\P(R/I)$ is also surjective.
This condition is equivalent to $R_{\ol r}^{\Box,\P}$ being isomorphic to a power series ring in $\dim_k L^{\Box,\P}$ variables.

\subsection{Local deformation conditions} Suppose that $\Sigma$ is some set, and for each $v \in \Sigma$, $\Pi_v$ is a pro-finite group also satisfying Mazur's $p$-finiteness condition, together with a map $\iota_v:\Pi_v \to \Pi$.
For each $v\in \Sigma$, also fix a deformation condition $\Lift_{\ol r_v}^{\P_v}$ for the representation $\ol r_v = \ol r \circ \iota_v$.
Then we can form a deformation condition for $\ol r$, which we write $\Lift_{\ol r}^\P$ consisting of lifts $r \in \Lift_{\ol r}(R)$ such that $r_v = r\circ \iota_v \in \Lift_{\ol r_v}^{\P_v}$ for all $v\in \Sigma$.
The tangent space $\Def_{\ol r}^\P(k[\epsilon])$ can be identified with a Selmer group
\[ H^1_\P(\Pi,\g) = \ker\left(H^1(\Pi,\g) \to \bigoplus_{v\in \Sigma}H^1(\Pi_v,\g)/L_v\right), \]
where $L_v = L^{\P_v}$ are the local tangent spaces.
One can also (as in \S2.2 of \cite{CHT} or \S3.2 of \cite{pat}) define a group $H^2_\P(\Pi,\g)$ measuring obstructions to liftability, as made precise in the following proposition.
For details, see \S3.2 of \cite{pat}.

\begin{prop}\label{univsize}
    Assume that $\g^{\ol r(\Pi)} = \z(\g)$, and let $\Lift_{\ol r_v}^{\P_v}$ be a collection of liftable local deformation conditions.
    Then the functor $\Def_{\ol r}^\P$ is representable by a universal deformation ring $R_{\ol r}^\P$ which is isomorphic to a power series ring over $\O$ in $h^1_\P(\Pi,\g)$ variables, modulo an ideal generated by at most $h^2_\P(\Pi,\g)$ elements.
\end{prop}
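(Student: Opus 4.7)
The plan is to establish representability first, and then to prove the presentation by separately identifying generators from the tangent space and relations from an obstruction theory valued in $H^2_\P$.

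For representability, I would invoke Schlessinger's criterion (in the Galois-theoretic formulation of Mazur). The hypothesis $\g^{\ol r(\Pi)} = \z(\g)$ says that the automorphisms of the trivial deformation that act trivially modulo the center are only those coming from the center itself; this rigidity is exactly what is needed to promote the groupoid $\Lift_{\ol r}^\P / \sim$ to a representable set-valued functor. Since $\Lift_{\ol r}^\P$ is already representable by hypothesis and is cut out inside $\Lift_{\ol r}$ by a $\wh G$-invariant ideal, the quotient $\Def_{\ol r}^\P$ inherits representability from $\Def_{\ol r}$, yielding the universal ring $R_{\ol r}^\P$.

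For the generators, the tangent space computation was already assembled in the excerpt: $\Def_{\ol r}^\P(k[\epsilon]) \cong L^\P = H^1_\P(\Pi,\g)$. Standard complete-local-Noetherian algebra (choose lifts of a $k$-basis of $\m/(\m^2,\varpi)$) then gives a surjection $\O\lb X_1,\ldots, X_{h^1_\P}\rb \twoheadrightarrow R_{\ol r}^\P$. So the real content is bounding the kernel by $h^2_\P(\Pi,\g)$ elements, and this is where I expect the main technical work to lie.

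For the obstructions, I would follow the deformation-theoretic recipe adapted to local conditions (as laid out in \S3.2 of \cite{pat} or \S2.2 of \cite{CHT}). Given a small surjection $R \twoheadrightarrow R/I$ and a $\P$-deformation $r_{R/I}$, I set-theoretically choose any lift $\tilde r$ to $G(R)$; the failure of $\tilde r$ to be a homomorphism produces a 2-cocycle $c \in Z^2(\Pi, \g \otimes_k I)$ whose class is the global obstruction. At each $v \in \Sigma$ the liftability of $\P_v$ guarantees that we may choose a local $\P_v$-lift $\tilde r_v$, and the difference $\tilde r|_{\Pi_v} - \tilde r_v$ produces a 1-cochain whose coboundary records the local component of $c$ modulo $L_v$. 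Patching these local modifications shows that the obstruction is in fact in the Selmer version $H^2_\P(\Pi,\g\otimes I)$. A standard Nakayama/small-extension induction then produces a presentation of $R_{\ol r}^\P$ with at most $h^2_\P(\Pi,\g)$ relations, completing the proof.

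The main obstacle is the second step: one must be careful that the local cochains used to kill the local obstructions patch correctly into a global class in the Selmer group, and this is where the liftability hypothesis on each $\P_v$ is genuinely needed. Everything else is essentially formal once representability and the identification of the tangent space are in place.
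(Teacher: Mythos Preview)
Your proposal is correct and follows exactly the standard approach the paper has in mind. In fact the paper does not give its own proof of this proposition at all: immediately before the statement it says ``For details, see \S3.2 of \cite{pat}'' (and just above that points to \S2.2 of \cite{CHT}), which are precisely the references you invoke for the obstruction-theory step and the construction of $H^2_\P$.
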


\begin{rmk}
    Requiring a fixed similitude character $\nu:\Pi \to S(\O)$ amounts to replacing $\g$ with $\g^0$ in the group cohomology.
    See Remark 3.8 in \cite{pat}.
\end{rmk}

\subsection{Deformations of Galois representations}We will now take $\ol r = \ol \rho:\Gamma_F \to G(k)$.
In particular, we can take $\Pi = \Gamma_{F,\Sigma}$ for some finite set of primes $\Sigma$ of $F$ (including all primes at which $\ol r$ is ramified, as well as primes above $p$ and $\infty$), and $\Pi_v = \Gamma_{F_v}$ for each $v\in \Sigma$.
All of these pro-finite groups satisfy the $p$-finiteness condition, see \cite{mazur}.
We make the following assumptions:
\begin{enumerate}
    \item All lifts of $\ol\rho$ have fixed similitude character $\nu:\Gamma_{F,\Sigma}\to S(\O)$.
    \item For each $v\in \Sigma$, $\Lift_{\ol\rho_v}^{\P_v}$ is a fixed liftable local deformation condition, $\P = \{\P_v\}_{v\in\Sigma}$, and $\Lift_{\ol\rho}^\P$ is the associated global deformation condition.
    \item $\g^{\ol\rho(\Gamma_{F,\Sigma})} = \z(\g)$, so that \Cref{univsize} applies.
\end{enumerate}

\subsection{Dual Selmer} Write $L_v$ for the tangent space $\Def_{\ol\rho_v}^{\P_v}(k[\epsilon])$, and write $L_v^\perp \subset H^1(\Gamma_{F_v},\g^0(1))$ for the orthogonal compliment of $L_v$ under the local duality pairing, where here we use our fixed identification $(\g^0)^* \cong \g^0$.
Then just as we formed the Selmer group $H^1_\P(\Gamma_{F,\Sigma},\g^0)$, we can form the corresponding dual Selmer group 
\begin{align*}
    H^1_{\P^\perp}&(\Gamma_{F,\Sigma},\g^0(1)) \\
    &= \ker\left(H^1(\Gamma_{F,\Sigma},\g^0(1)) \to \bigoplus_{v\in\Sigma}H^1(\Gamma_{F_v},\g^0(1))/L_v^\perp\right).
\end{align*}
The group $H^1_{\P^\perp}(\Gamma_{F,\Sigma},\g^0(1))$ has the same $k$-dimension as $H^2_\P(\Gamma_{F,\Sigma},\g^0)$ (see \S3.3 of \cite{pat}).
Wiles's formula gives the following.

\begin{prop}\label{wilescriterion}
    With the above hypotheses,
    \begin{align*}
        &h^1_\P(\Gamma_{F,\Sigma},\g^0) - h^1_{\P^\perp}(\Gamma_{F,\Sigma},\g^0(1)) \\
        &\quad= h^0(\Gamma_F,\g^0) - h^0(\Gamma_F,\g^0(1)) + \sum_{v\in\Sigma}(\dim L_v - h^0(\Gamma_{F_v},\g^0)).
    \end{align*}
\end{prop}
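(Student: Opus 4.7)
The statement is the classical Greenberg-Wiles formula (in the form used by Wiles in his original paper on modular forms and ellipic curves, and found e.g.\ in Neukirch-Schmidt-Wingberg, Milne's \emph{Arithmetic Duality Theorems}, or \S8.7 of \cite{NSW}). My plan would be to deduce it from the Poitou-Tate nine-term exact sequence together with local Tate duality, treating the Selmer and dual Selmer groups in parallel.

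First I would splice the defining short exact sequence
\[ 0 \to H^1_\P(\Gamma_{F,\Sigma},\g^0) \to H^1(\Gamma_{F,\Sigma},\g^0) \to \bigoplus_{v\in\Sigma} H^1(\Gamma_{F_v},\g^0)/L_v \]
into the Poitou-Tate sequence for the finite $\Gamma_{F,\Sigma}$-module $\g^0$ (with its Tate dual $\g^0(1)$, using the $G$-invariant form on $\g^0$ to identify $(\g^0)^\ast$ with $\g^0$). The key input is local Tate duality at each $v\in\Sigma$: the pairing $H^1(\Gamma_{F_v},\g^0)\times H^1(\Gamma_{F_v},\g^0(1)) \to k$ is perfect, so $L_v^\perp$ is the exact annihilator of $L_v$, and consequently $H^1(\Gamma_{F_v},\g^0)/L_v \cong (L_v^\perp)^\ast$. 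Dualizing and splicing produces a single long exact sequence
\[ 0 \to H^0(\Gamma_{F,\Sigma},\g^0) \to \bigoplus_v H^0(\Gamma_{F_v},\g^0) \to (H^1_{\P^\perp})^\ast \to H^1_\P \to \bigoplus_v L_v \to \cdots \]
continuing on to involve $H^0(\Gamma_{F,\Sigma},\g^0(1))$ and the local $H^2$'s at the other end.

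Taking the alternating sum of dimensions along this exact sequence (which vanishes) then gives the formula after two simplifications: (i) local terms contribute $\sum_v (\dim L_v - h^0(\Gamma_{F_v},\g^0))$, and (ii) the remaining global terms involving $H^2(\Gamma_{F,\Sigma},\g^0)$ and the local $H^2$'s are eliminated using the global Euler characteristic formula for $\g^0$ as an $\F_p[\Gamma_{F,\Sigma}]$-module. The resulting identity is exactly the one claimed.

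The statement is standard and the only real work is bookkeeping. The main subtlety to attend to is the treatment of archimedean places: since we are working with a module killed by $p$ (an odd prime), the Tate-modified cohomology must be used at real places so that the local Euler characteristic formula and Poitou-Tate both hold as stated. Once one uses $\hat H^i$ at $v\mid\infty$ — which in our setting is absorbed into the convention that $h^i(\Gamma_{F_v},\g^0)$ denotes Tate cohomology — the bookkeeping goes through without further difficulty, and indeed the most efficient route is simply to cite \cite[Thm.~8.7.9]{NSW} or the analogous formulation in \S3.3 of \cite{pat}.
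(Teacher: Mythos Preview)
Your proposal is correct and aligns with the paper's treatment: the paper does not prove this proposition at all but simply states it as ``Wiles's formula'' with an implicit reference to \S3.3 of \cite{pat}. Your sketch via the Poitou--Tate nine-term sequence and local duality is the standard argument behind the cited result, and your closing remark that one should simply cite \cite{NSW} or \cite{pat} is exactly what the paper does.
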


\subsection{Numerology} We will put ourselves in a situation where both $h^0(\Gamma_F,\g^0)$ and $h^0(\Gamma_F,\g^0(1))$ vanish, simplifying Wiles's formula to
\begin{equation}\label{wiles1}
    h^1_\P(\Gamma_{F,\Sigma},\g^0) - h^1_{\P^\perp}(\Gamma_{F,\Sigma},\g^0(1)) = \sum_{v\in\Sigma}(\dim L_v - h^0(\Gamma_{F_v},\g^0)).
\end{equation}
A first thing to notice is that adding a prime $v$ to $\Sigma$ and a liftable local condition $\P_v$ at $v$ which satisfies
\begin{equation}\label{basiccondition}
    \dim L_v = h^0(\Gamma_{F_v},\g^0) 
\end{equation}
will have no effect on \cref{wiles1}.
One important example of this are the primes of \textit{Ramakrishna type,} described in \Cref{rammethod}.
Ramakrishna's method uses the addition of primes of Ramakrishna type to annihilate the dual Selmer group on the left hand side of \cref{wiles1}, while leaving the right hand side unchanged, simplifying \cref{wiles1} to 
\begin{equation}\label{wiles2}
    h^1_\P(\Gamma_{F,\Sigma},\g^0) = \sum_{v\in\Sigma}(\dim L_v - h^0(\Gamma_{F_v},\g^0)).
\end{equation}
Then the universal deformation ring $R_{\ol\rho}^\P$ will be isomorphic to a power series ring over $\O$ in this many variables.
Assume for all $v\in\Sigma$, $v\nmid p\infty$ that we have fixed liftable local conditions $\P_v$ satisfying \cref{basiccondition}.
For each $v\mid\infty$, we are forced to take $L_v = 0$ (assuming $p>2$), and \cref{wiles2} becomes
\begin{equation}\label{wiles3}
    h^1_\P(\Gamma_{F,\Sigma},\g^0) = \sum_{v\mid p}(\dim L_v - h^0(\Gamma_{F_v},\g^0)) - \sum_{v\mid \infty}h^0(\Gamma_{F_v},\g^0).
\end{equation}
Of course, the goal is for this number to be $\geq 0$ if we want to produce characteristic 0 lifts.
Fix a Borel subgroup $B$ containing a fixed maximal torus $T$, and let $N$ denote the unipotent radical of $B$.
In \cite{pat} an ``ordinary with fixed Hodge-Tate weights'' local condition is imposed at $v\mid p$, which is liftable and satisfies
\[ \dim L_v = h^0(\Gamma_{F_v},\g^0) + [F_v:\Q_p]\dim\n. \]
This condition allows control over Hodge-Tate weights, producing lifts which are geometric (in the sense of the Fontaine-Mazur conjecture), and \cref{wiles3} becomes
\begin{equation}\label{wiles4}
h^1_\P(\Gamma_{F,\Sigma},\g^0) = [F:\Q]\dim\n - \sum_{v\mid \infty}h^0(\Gamma_{F_v},\g^0).
\end{equation}
Attempting to force \cref{wiles4} to be nonnegative brings us back to the conclusion of the oddness discussion in the introduction: $F$ must be totally real, and $\ol\rho$ must map complex conjugation to an odd involution.
Under these assumptions, \cref{wiles4} equals 0, and  $R_{\ol\rho}^\P \cong \O$.

However, recall that we are taking $F$ to be a CM field.
With ordinary local conditions at $v\mid p$, the right hand side of \cref{wiles4} equals
\[ [F:\Q]\dim\n - \frac{[F:\Q]}2\dim \g^0 = -\frac{[F:\Q]}2\dim\t^0, \]
so this strategy fails.

\subsection{Nearly ordinary deformations}\label{ss-nearly-ord-defs} In \S7 of \cite{cm}, this problem is investigated with $G = \GL_2$ and $F/\Q$ quadratic imaginary with $p$ split.
At each prime $v\mid p$, they replace the ``ordinary with fixed Hodge-Tate weights'' condition with a ``nearly ordinary'' condition, where the weights are allowed to vary.
As we will explain below, this has the effect of replacing $\n$ in \cref{wiles4} with $\b$.
This allows us to once again produce characteristic 0 deformations, as we have
\[ [F:\Q]\dim\b^0 - \frac{[F:\Q]}2\dim \g^0 = +\frac{[F:\Q]}2\dim\t^0. \]
The trade off is that this no longer guarantees that the deformations produced are geometric (in fact, we will produce examples where the geometric points conjecturally related to automorphic representations are \textit{sparse} in the deformation space).

We will now explain the nearly ordinary condition.
Compare to \S4.1 of \cite{pat}, which describes the ordinary condition with fixed Hodge-Tate weights in the same manner and generality, and \S2 of \cite{cm} which describes their nearly ordinary condition for $\GL_2$.

\begin{defn}[Nearly ordinary deformations]
    Fix $v\mid p$, and suppose that $\ol\rho_v:\Gamma_{F_v}\to G(k)$ takes values in $B(k)$ for a fixed Borel subgroup $B$ of $G$.
    Define $\Lift^{\ord}_{\ol\rho_v}(R)$ to be the subset of $\Lift_{\ol\rho_v}(R)$ of lifts $\rho$ such that there exists $g \in \wh G(R)$ such that $\,^g\rho(\Gamma_F) \subset B(R)$.
    We call such lifts \textbf{nearly ordinary} and we call $\Lift_{\ol\rho_v}^{\ord}$ the \textbf{nearly ordinary deformation condition}.
\end{defn}
Now we will impose the following condition on $\ol\rho_v$:
\[ (\text{REG})\qquad H^0(\Gamma_{F_v},\g/\b) = 0. \]
\begin{lem}
    Suppose $\ol\rho_v$ satisfies (REG).
    Then $\Lift_{\ol\rho_v}^{\ord}$ is a local deformation condition.
\end{lem}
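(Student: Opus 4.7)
The plan is to verify two things: closure of $\Lift^{\ord}_{\ol\rho_v}$ under strict equivalence, and representability as a subfunctor of $\Lift_{\ol\rho_v}$. Closure is formal: if $\,^h\sigma = \rho$ for some $h \in \wh G(R)$ realizes a strict equivalence, and $g \in \wh G(R)$ witnesses $\,^g\sigma \in B(R)$, then $\,^{gh^{-1}}\rho = \,^g\sigma \in B(R)$, so $\rho$ is nearly ordinary as well. The substance of the lemma is representability, and that is where (REG) enters.

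First I would introduce the auxiliary subfunctor $\Lift^{B}_{\ol\rho_v} \subset \Lift_{\ol\rho_v}$ of lifts taking values literally in $B(R)$. Choosing topological generators $\gamma_1, \ldots, \gamma_s$ of $\Gamma_{F_v}$ (available by the $p$-finiteness condition), this subfunctor is cut out by the closed conditions $\rho(\gamma_i) \in B(R)$, so it is a closed (hence representable) subfunctor of $\Lift_{\ol\rho_v}$. I would then exhibit $\Lift^{\ord}_{\ol\rho_v}$ as the contracted product $\wh G \times^{\wh B} \Lift^{B}_{\ol\rho_v}$, where $\wh B$ acts on $\wh G \times \Lift^{B}_{\ol\rho_v}$ by $(g, \sigma) \cdot b = (gb, \,^{b^{-1}}\sigma)$. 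The multiplication map $(g,\sigma) \mapsto \,^g\sigma$ descends to this quotient and surjects onto $\Lift^{\ord}_{\ol\rho_v}$ by the very definition of nearly ordinary; what remains is injectivity.

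Injectivity reduces to the following rigidity claim, which carries the real content of the proof: if $\sigma_1, \sigma_2 \in \Lift^{B}_{\ol\rho_v}(R)$ and $g \in \wh G(R)$ satisfies $\,^g\sigma_1 = \sigma_2$, then $g \in \wh B(R)$. I would prove this by induction along a chain of small surjections starting from $R = k$, where the claim is trivial since $\wh G(k) = \wh B(k) = \{1\}$. For a small surjection $R \twoheadrightarrow R/I$, by induction the image of $g$ already lies in $\wh B(R/I)$; lifting it to some $b \in \wh B(R)$ using smoothness of $\wh B$ and replacing $g$ by $b^{-1}g$, I may assume $g = 1 + X$ with $X \in \g \otimes_k I$. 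Since $\m_R I = 0$, a direct expansion in $\wh G(R)$ gives
\[ \,^g\sigma_1(\gamma) \cdot \sigma_1(\gamma)^{-1} \;=\; 1 + X - \Ad\ol\rho_v(\gamma)(X), \]
and requiring this to lie in $\wh B(R)$ for every $\gamma$ is equivalent to $X - \Ad\ol\rho_v(\gamma)(X) \in \b \otimes_k I$. Projecting to $\g/\b$ places the image $\bar X$ in $H^0(\Gamma_{F_v},\g/\b) \otimes_k I$, which vanishes by (REG). Hence $X \in \b \otimes_k I$ and $g \in \wh B(R)$, closing the induction.

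Once the rigidity claim is in hand, $\wh G \times^{\wh B} \Lift^{B}_{\ol\rho_v}$ injects into $\Lift_{\ol\rho_v}$ with image exactly $\Lift^{\ord}_{\ol\rho_v}$, and this contracted product is representable because $\wh B \subset \wh G$ is a smooth closed subgroup with quotient represented by the formal completion of $G/B$ at the identity coset. The main obstacle is the inductive rigidity argument: it is the one step where (REG) is essential, and once (REG) has been cashed in at the level of cocycles mod $\b$, everything else is formal manipulation of representable subfunctors.
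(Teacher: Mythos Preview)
Your proposal is correct and follows essentially the same line as the paper. The paper's proof reduces everything to the rigidity statement that if $\rho$ and $\,^g\rho$ are both $B(R)$-valued then $g \in \wh B(R)$, citing Lemma~4.2 of \cite{pat} for details; you prove exactly this claim via induction along small surjections using (REG), and then package representability through the contracted product $\wh G \times^{\wh B} \Lift^B_{\ol\rho_v}$, which is the natural way to flesh out the sketch.
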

\begin{proof}
    We need to show that this definition is well defined, and that $\Lift_{\ol\rho_v}^{\ord}$ is a representatble subfuctor.
    Both follow from noting that under assumption (REG), if $\rho(\Gamma_{F_v})$ and $\,^g\rho(\Gamma_{F_v}) \subset B(R)$, then $g \in \wh B(R)$.
    For more details and references, see Lemma 4.2 and its proof in \cite{pat}.
\end{proof}
The tangent space to $\Lift_{\ol\rho}^{\ord}$ is described by the following Lemma.
\begin{lem}
    Assume $\ol\rho$ satisfies (REG) so that the preceding lemma applies.
    Then 
    \[ L^{\ord} = \image\left(H^1(\Gamma_{F_v},\b^0) \to H^1(\Gamma_{F_v},\g^0)\right), \]
    which is an injective map.
    Thus we can identify $L^{\ord} = H^1(\Gamma_{F_v},\b^0)$.
\end{lem}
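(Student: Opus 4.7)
The plan is to verify the two inclusions that identify $L^{\ord}$ with the image of $H^1(\Gamma_{F_v},\b^0) \to H^1(\Gamma_{F_v},\g^0)$, and then to deduce injectivity of this map from hypothesis (REG) via the long exact sequence in cohomology.

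First, recall that under the identification $\Def_{\ol\rho_v}(k[\epsilon]) \cong H^1(\Gamma_{F_v},\g^0)$ (working with fixed similitude character, so we use $\g^0$ rather than $\g$), a cohomology class $[c]$ corresponds to the lift $\rho_c(\sigma) = (1+\epsilon\, c(\sigma))\ol\rho_v(\sigma)$. To show $\image(H^1(\Gamma_{F_v},\b^0)\to H^1(\Gamma_{F_v},\g^0)) \subseteq L^{\ord}$, I take a class in the image and represent it by a cocycle $c$ valued in $\b^0$; then $\rho_c$ is manifestly valued in $B(k[\epsilon])$ (since $\ol\rho_v$ takes values in $B(k)$ and $1+\epsilon X \in \wh B(k[\epsilon])$ for $X \in \b^0$), so it defines a class in $L^{\ord}$. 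Conversely, given a tangent vector in $L^{\ord}$, by definition some $\wh G(k[\epsilon])$-conjugate of the corresponding lift is valued in $B(k[\epsilon])$. Changing the representative cocycle by the coboundary coming from this conjugation, I may assume the lift $\rho_c$ is itself valued in $B(k[\epsilon])$; then the cocycle $c$ must take values in $\b^0$, so $[c]$ lies in the image of $H^1(\b^0)$.

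For injectivity, I apply the long exact sequence of Galois cohomology to the short exact sequence of $\Gamma_{F_v}$-modules
\[ 0 \to \b^0 \to \g^0 \to \g^0/\b^0 \to 0, \]
which gives the exact piece
\[ H^0(\Gamma_{F_v},\g^0/\b^0) \to H^1(\Gamma_{F_v},\b^0) \to H^1(\Gamma_{F_v},\g^0). \]
Using the decomposition $\g = \z(\g)\oplus\g^0$ (valid because $p$ is a very good prime for $G$) and $\b = \z(\g)\oplus\b^0$, there is a $\Gamma_{F_v}$-equivariant isomorphism $\g/\b \cong \g^0/\b^0$. Hypothesis (REG) therefore says $H^0(\Gamma_{F_v},\g^0/\b^0)=0$, which forces the map $H^1(\Gamma_{F_v},\b^0)\to H^1(\Gamma_{F_v},\g^0)$ to be injective.

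The main substantive point is really the forward inclusion $L^{\ord} \subseteq \image(H^1(\b^0)\to H^1(\g^0))$: one needs the observation (which is exactly the content of the proof of the previous lemma) that under (REG), any $g \in \wh G(k[\epsilon])$ conjugating a $\b^0$-valued deformation back to a $B(k[\epsilon])$-valued one must itself lie in $\wh B(k[\epsilon])$, so that the coboundary adjustment stays inside $\b^0$ and the cocycle genuinely lifts to $Z^1(\Gamma_{F_v},\b^0)$. Everything else is a standard Lie-algebra-cohomology bookkeeping exercise.
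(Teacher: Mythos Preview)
Your proof is correct and follows essentially the same route as the paper: adjust a representing cocycle by the coboundary coming from the conjugating element so that it lands in $\b^0$, and deduce injectivity of $H^1(\Gamma_{F_v},\b^0)\to H^1(\Gamma_{F_v},\g^0)$ from the vanishing of $H^0(\Gamma_{F_v},\g/\b)$ via the long exact sequence. One small correction to your closing commentary: the observation that under (REG) any $g\in\wh G$ sending a $B$-valued lift to a $B$-valued lift lies in $\wh B$ is what makes the deformation condition well defined (the \emph{previous} lemma), but it is not needed for the inclusion $L^{\ord}\subseteq\image$ here --- you only need \emph{some} coboundary adjustment landing in $\b^0$, which is automatic once the conjugated lift is $B$-valued.
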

\begin{proof}
    This can be extracted from the proof of Lemma 4.3 of \cite{pat}, but we will repeat the proof here, since it is short and our lemma looks slightly different.
    Assumption (REG) assures that we have a deformation condition, and that the map in the lemma is injective.
    If $\phi \in Z^1(\Gamma_{F_v},\g^0)$ represents a class in $L^{\ord}$, then there is some $X \in \g$ such that
    \[ \exp(\epsilon X)\exp(\epsilon\phi(g))\ol\rho(g)\exp(-\epsilon X) \in B(k[\epsilon]). \]
    Observe that the cycle
    \[ \phi'(g) = \phi(g) + X - \Ad(\ol\rho(g))X \]
    is cohomologous to $\phi$, and $\phi' \in H^1(\Gamma_{F_v},\b^0)$.
\end{proof}
To continue, we need a couple additional assumptions, one of which is the following:
\[ \text{(REG*)}\qquad H^0(\Gamma_{F_v},\g/\b(1)) = 0. \]
\begin{prop}
    Assume that $\ol\rho$ satisfies (REG) and (REG*), and that $\zeta_p \not\in F_v$.
    Then $\Lift_{\ol\rho_v}^{\ord}$ is liftable, and
    \[ \dim L^{\ord} = [F:\Q_p]\dim\b^0 + h^0(\Gamma_{F_v},\g^0). \]
\end{prop}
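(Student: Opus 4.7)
The plan is to use the tangent space identification $L^{\ord} \cong H^1(\Gamma_{F_v}, \b^0)$ from the preceding lemma, compute its dimension via the local Euler characteristic formula (using (REG) to handle $h^0$ and (REG*) together with $\zeta_p \notin F_v$ to force $h^2 = 0$), and then deduce liftability from the same vanishing $H^2(\Gamma_{F_v}, \b^0) = 0$ by a standard obstruction-theoretic argument.

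For $h^0$: apply the long exact cohomology sequence to $0 \to \b^0 \to \g^0 \to \g^0/\b^0 \to 0$. Since $p$ is very good, $\z(\g) \subset \b$ gives $\g^0/\b^0 \cong \g/\b$, so (REG) kills $H^0$ of the quotient and $h^0(\Gamma_{F_v}, \b^0) = h^0(\Gamma_{F_v}, \g^0)$.

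For $h^2$: by local Tate duality, $h^2(\Gamma_{F_v}, \b^0) = h^0(\Gamma_{F_v}, (\b^0)^*(1))$. Under the fixed $G$-invariant pairing on $\g^0$, the formula $V^* \cong \g^0/V^\perp$ combined with the root space decomposition (which shows $(\n^0)^\perp = \b^0$) identifies $(\b^0)^* \cong \g^0/\n^0$ and $(\n^0)^* \cong \g^0/\b^0$ as $B$-modules. Dualizing $0 \to \n^0 \to \b^0 \to \t^0 \to 0$ thus yields an exact sequence $0 \to \t^0 \to \g^0/\n^0 \to \g^0/\b^0 \to 0$ (using also that the pairing restricts nondegenerately to $\t^0$, so $(\t^0)^* \cong \t^0$ with trivial $B$-action). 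Taking $H^0$ of its twist by $k(1)$, the right-hand term vanishes by (REG*), while the left-hand term vanishes because $T$ acts trivially on $\t^0$, so $\t^0(1)$ is a sum of copies of $k(1)$, and $\zeta_p \notin F_v$ gives $H^0(\Gamma_{F_v}, k(1)) = 0$.

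Combining, the local Euler characteristic formula $h^1 - h^0 - h^2 = [F_v:\Q_p]\dim_k \b^0$ gives exactly the stated dimension. Finally, liftability follows from the vanishing of the obstruction group: given a small surjection $R \twoheadrightarrow R/I$ and $\rho \in \Lift^{\ord}_{\ol\rho_v}(R/I)$, after $\wh G(R/I)$-conjugation we may assume $\rho$ is $B(R/I)$-valued; the obstruction to lifting it to $B(R)$ (with fixed similitude character) lies in $H^2(\Gamma_{F_v}, \b^0) = 0$, and smoothness of $\wh G$ over $\O$ lets us lift the conjugating element back up. The main subtlety is tracking the $B$-module structure on the duals correctly: the orthogonal complement of $\b^0$ in $\g^0$ is $\n^0$ (not $\bar\n^0$), so the identification $(\b^0)^* \cong \g^0/\n^0$ is what brings (REG*) into play, and one must check the analogue for $\t^0$ so that the cyclotomic twist argument closes.
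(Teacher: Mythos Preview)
Your proposal is correct and takes essentially the same approach as the paper: the $h^0$ computation via the long exact sequence of $0 \to \b^0 \to \g^0 \to \g/\b \to 0$ is identical, and your $h^2$ computation (dualize $\b^0$ first to $\g^0/\n$, then take $H^0$ of the twist of $0 \to \t^0 \to \g^0/\n \to \g/\b \to 0$) is exactly the Tate dual of the paper's (take $H^2$ of $0 \to \n \to \b^0 \to \b^0/\n \to 0$, then dualize each piece), invoking (REG*) and $\zeta_p \notin F_v$ at the same points. The liftability argument via vanishing of the $H^2(\Gamma_{F_v},\b^0)$ obstruction is likewise the same.
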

\begin{proof}
    See Prop. 4.4 in \cite{pat}.
    Again, we repeat the proof since our deformation condition looks slightly different.
    
    The Killing form induces a nondegenarate pairing $\n\times\g/\b\to k$, and thus a $\Gamma_{F_v}$-equivariant isomorphism $\n^* \cong \g/\b$.
    By local duality and (REG*),
    \[ H^2(\Gamma_{F_v},\n) \cong H^0(\Gamma_{F_v},\g/\b(1))^* = 0. \]
    From the long exact sequence associated to $0 \to \b^0 \to \g^0 \to \g/\b \to 0$ and (REG), we see that
    \[ H^0(\Gamma_{F_v},\b^0) \cong H^0(\Gamma_{F_v},\g^0). \]
    From the long exact sequence associated to $0 \to \n \to \b^0 \to \b^0/\n \to 0$ and the fact that $H^2(\Gamma_{F_v},\n) = 0$ by (REG*) we have
    \[ H^2(\Gamma_{F_v},\b^0) \cong H^2(\Gamma_{F_v},\b^0/\n). \]
    Notice that $\b^0/\n$ is a trivial $\Gamma_{F_v}$-module, so by local duality,
    \[ H^2(\Gamma_{F_v},\b^0/\n) \cong H^0(\Gamma_{F_v},\b^0/\n(1)) = 0. \]
    Here we are using our assumption that $\zeta_p \not\in F$.
    Putting everything together using the local Euler characteristic formula we have
    \begin{align*}
        h^1(\Gamma_{F_v},\b^0) 
        &= h^0(\Gamma_{F_v},\b^0) + h^2(\Gamma_{F_v},\b^0) + [F:\Q_p]\dim \b^0 \\
        &= h^0(\Gamma_{F_v},\g^0) + 0 + [F:\Q_p]\dim\b^0,
    \end{align*}
    as desired.
    
    To complete the proof, we desire liftablilty.
    An obstruction to lifting over a small surjection with kernel $I$ lies in $H^2(\Gamma_{F_v},\b^0)\otimes_k I$, and we have already shown that, under our hypotheses, $H^2(\Gamma_{F_v},\b^0) = 0$.
\end{proof}

\begin{rmk}
    The ordinary condition with fixed Hodge-Tate weights of \cite{pat} is defined in the same way, except by requiring of lifts $\rho$ that
    \[ I_{F_v} \xrightarrow{(^g\rho)|_{I_{F_v}}} B(R) \twoheadrightarrow T(R) \]
    is given by a fixed lift $\chi:I_{F_v} \to T(\O)$ of 
    \[ I_{F_v} \xrightarrow{\ol\rho|_{I_{F_v}}} B(k) \twoheadrightarrow T(k). \]
    Choosing $\chi$ allows us to produce characteristic 0 lifts of $\ol\rho$ which are de Rham at $v\mid p$, and therefore geometric.
    See Lemma 4.8 in \cite{pat} 
\end{rmk}
\section{The method of Ramakrishna}\label{rammethod}

\subsection{Introduction} This section begins by describing the local condition at certain auxiliary primes which allow one to annihilate the dual Selmer group.
Then, following \S5 of \cite{pat}, an axiomatized version of Ramakrishna's method is stated which is suitable for our application.
We will refer to \cite{pat} for most of the proofs, indicating the necessary alterations we need.

\subsection{Ramakrishna primes} Let $\ell \neq p$ be a prime, and $v\mid \ell$ a place of $F$ at which $\ol\rho$ is unramified.
Let $\fr_v$ denote a choice of geometric Frobenius for $F_v$.
Suppose that $\ol\rho_v(\fr_v)$ is a regular semi-simple element lying in $T(k)$ for some maximal torus $T$.
\begin{defn}
    We say the representation $\ol\rho_v$ as above is of \textbf{Ramakrishna type} if there exists a \textit{unique} root $\alpha \in \Phi(G,T)$ such that
    \[ \alpha(\ol\rho_v(\fr_v)) = \ol\kappa(\fr_v) = q_{F_v}^{-1}. \]
    Notice that $\ell\not\equiv 1 \pmod p$ automatically.
\end{defn}
\begin{rmk}
    In \cite{pat} it is not required that $\alpha$ be unique, but with this additional constraint, $H^0(\Gamma_{F_v},\g^0(1))$ is 1-dimensional, which will be necessary for the calculations of \Cref{sparse}.
\end{rmk}
\begin{defn}[Ramakrishna deformations]
    Let $H_\alpha = T\cdot U_\alpha$.
    Define $\Lift^{\Ram}_{\ol\rho_v}(R)$ to be the set of lifts $\rho \in \Lift_{\ol\rho_v}(R)$ which are $\wh G(R)$-conjugate to some $\rho':\Gamma_{F_v} \to H_\alpha(R)$ such that the composition
    \[ \Gamma_{F_v} \xrightarrow{\rho'} H_\alpha(R) \xrightarrow{\Ad}\GL_R(\g_\alpha\otimes R) = R^\times \]
    is equal to $\kappa$.
\end{defn}
The following is Lemma 4.10 in \cite{pat}.
\begin{lem}
    For $\ol\rho_v$ of Ramakrishna type, $\Lift^{\Ram}_{\ol\rho_v}$ is well-defined and yields a liftable deformation condition.
\end{lem}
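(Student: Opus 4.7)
My plan is to adapt the proof of Lemma~4.10 of \cite{pat}, mildly simplified by the requirement that $\alpha$ be the unique root with $\alpha(\ol\rho_v(\fr_v)) = q_{F_v}^{-1}$. Three things must be verified: that the property defining $\Lift^{\Ram}_{\ol\rho_v}$ is well defined (giving a subfunctor of $\Lift_{\ol\rho_v}$), that this subfunctor is representable, and that it is liftable.

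For well-definedness, suppose $\rho \in \Lift_{\ol\rho_v}(R)$ is $\wh G(R)$-conjugate to two lifts $\rho', \rho''$ both factoring through $H_\alpha(R)$ with the prescribed $\Ad$-character on $\g_\alpha$. Then $\rho'$ and $\rho''$ are themselves $\wh G(R)$-conjugate, by some $g$. Regular semisimplicity of $\ol\rho_v(\fr_v)$ pins its centralizer down to $T$, and the uniqueness of $\alpha$ rules out Weyl elements that would map $U_\alpha$ to another root subgroup, so a Hensel-type rigidity argument places $g \in \wh H_\alpha(R)$. Any such $g$ preserves both $H_\alpha$ and the condition on $\g_\alpha$, so the defining property is intrinsic to $\rho$. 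Representability of the resulting subfunctor then follows from the standard argument of \S4.2 of \cite{pat}, since the condition is carved out of $R^\Box_{\ol\rho_v}$ by $\wh G$-invariant closed conditions.

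The main work is liftability. For a small surjection $R \twoheadrightarrow R/I$ and a Ramakrishna lift $\rho_{R/I}$, I would first $\wh G(R)$-conjugate (using smoothness of $\wh G$ to lift the conjugator from $R/I$) so that $\rho_{R/I}$ factors through $H_\alpha(R/I) = T(R/I) \ltimes U_\alpha(R/I)$, writing $\rho_{R/I} = \tau_{R/I} \cdot u_{R/I}$ with $\alpha\tau_{R/I} \equiv \kappa \pmod I$. The lifting problem then splits into lifting the $T$-character $\tau_{R/I}$ subject to $\alpha\tau_R = \kappa$, and lifting the $U_\alpha$-cocycle $u_{R/I}$. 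The character lift is unobstructed after dividing out $\alpha^\vee(\G_m)$: the quotient torus has trivial Galois action, and the relevant $H^2$ vanishes by local duality, using $\ell \not\equiv 1 \pmod p$ (forced by $q_{F_v}^{-1} \ne 1$).

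Lifting the $U_\alpha$-cocycle is the main obstacle: the naive obstruction lies in $H^2(\Gamma_{F_v}, \g_\alpha \otimes I)$, which under the Galois action $\alpha\tau = \kappa$ is isomorphic to $H^2(\Gamma_{F_v}, k(1)) \otimes I$, one-dimensional by local duality and hence not automatically zero. The resolution, modeled on the rank-one argument of Ramakrishna and executed in the proof of Lemma~4.10 of \cite{pat}, is a careful matching of this obstruction class against the residual degrees of freedom in the $T$-component of $\tau_R$ and in the $\wh G(R)$-conjugator lifting from $R/I$; the key input is the explicit structure of the $\SL_2$-triple attached to $\alpha$. Once this matching is confirmed, $\rho_R$ exists, and a dimension count as in \cite{pat} gives $\dim L^{\Ram} = h^0(\Gamma_{F_v}, \g^0)$, matching the expected tangent space dimension.
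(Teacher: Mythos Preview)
Your approach is the same as the paper's: both defer to Lemma~4.10 of \cite{pat}, and the paper in fact gives no argument beyond that citation. Your treatment of well-definedness and representability is accurate.

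Your sketch of liftability, however, mischaracterizes the argument. The proof in \cite{pat} does not ``match the one-dimensional obstruction in $H^2(\Gamma_{F_v},\g_\alpha)$ against residual degrees of freedom in the $T$-component and the conjugator.'' Instead it exploits the explicit tame presentation of $\Gamma_{F_v}$: since $v\nmid p$ and $\ol\rho_v$ is unramified, any lift factors through the tame quotient, topologically generated by (arithmetic) Frobenius $\phi$ and a tame generator $\tau$ with $\phi\tau\phi^{-1}=\tau^{q}$. After conjugating so that $\rho(\phi)=t\in T(R)$, the Ramakrishna condition gives $\alpha(t)=q$, and one checks directly that $\rho(\tau)$ must take the form $u_\alpha(b)$ for some $b\in\m_R$ (the $\wh T$-component is killed because $q-1$ is a unit). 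The tame relation then reads
\[
t\,u_\alpha(b)\,t^{-1}=u_\alpha(\alpha(t)b)=u_\alpha(qb)=u_\alpha(b)^{q},
\]
which holds automatically. Ramakrishna lifts are thus parametrized explicitly by pairs $(t,b)$, and liftability follows at once by lifting $t$ and $b$ separately. The point is that the condition $\alpha(t)=q$ is exactly what makes the tame relation tautological, so no obstruction class ever materializes; there is nothing to cancel. (Your closing dimension count $\dim L^{\Ram}=h^0(\Gamma_{F_v},\g^0)$ is the content of the \emph{next} lemma in the paper, not this one.)
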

Let $T_\alpha = \ker(\alpha)^\circ$, and $\t_\alpha$ be the Lie algebra of $T_\alpha$.
If $\l_\alpha$ is the Lie algebra of the 1-dimensional torus generated by $\alpha^\vee$, then $\t_\alpha\oplus\l_\alpha \cong \t^0$.
The tangent space $L^{\Ram}$ is described by the following, which is Lemma 4.11 in \cite{pat}.
\begin{lem}
    Let $W = \t_\alpha\oplus\g_\alpha$.
    Then
    \begin{enumerate}
        \item $L^{\Ram} = \image\left(H^1(\Gamma_{F_v},\ol\rho(W)) \to H^1(\Gamma_{F_v},\g^0)\right)$
        \item $\dim L^{\Ram} = h^0(\Gamma_{F_v},\g^0)$
        \item $L^{\Ram,\perp} = \image\left(H^1(\Gamma_{F_v},\ol\rho(W^\perp)(1)) \to H^1(\Gamma_{F_v},\g^0(1))\right)$
        \item All cocycles in $L^{\Ram,\Box}$ have vanishing $\l_\alpha$-component under the canonical decomposition 
        \[ \g^0 = \t_\alpha \oplus \l_\alpha \bigoplus_\gamma \g_\gamma. \]
        All cocycles in $L^{\Ram,\perp,\Box}$ have vanishing $\g_{-\alpha}$-component.
    \end{enumerate}
\end{lem}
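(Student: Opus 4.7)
Since this is essentially Lemma 4.11 of \cite{pat} (the uniqueness of $\alpha$ being used only later, to force $h^0(\Gamma_{F_v},\g^0(1)) = 1$), I would import the strategy from there and handle the four parts in sequence. For (1) and the first half of (4), represent a tangent vector by a cocycle $\phi:\Gamma_{F_v} \to \g^0$ with lift $\rho = \exp(\epsilon\phi)\cdot\ol\rho$ and unpack the Ramakrishna condition: the requirement that there exist $g \in \wh G(k[\epsilon])$ with ${}^g\rho$ factoring through $H_\alpha(k[\epsilon])$ and $\Ad\circ{}^g\rho|_{\g_\alpha} = \kappa$ forces, after a coboundary adjustment of the form $\phi \mapsto \phi + X - \Ad(\ol\rho)X$ for some $X \in \g^0$, the cocycle $\phi$ to take values in $W = \t_\alpha \oplus \g_\alpha$; the $\l_\alpha$-direction in $\t^0$ is killed precisely because it is the direction in which a perturbation would shift $\alpha\circ\rho$ away from $\kappa$ to first order. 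Conversely, every $W$-valued cocycle produces a valid Ramakrishna lift, which gives (1), and the first half of (4) is then automatic.

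For (2), $\t_\alpha$ is $\Gamma_{F_v}$-trivial and $\g_\alpha \cong k(1)$ since $\alpha\circ\ol\rho(\fr_v) = q_{F_v}^{-1} = \ol\kappa(\fr_v)$. Because $v \nmid p$, the local Euler characteristic of $W$ vanishes. The condition $\ell \not\equiv 1 \pmod p$ gives $H^0(\g_\alpha) = 0$, local duality together with $\g_\alpha^*(1) \cong k$ gives $h^2(W) = 1$, and therefore $h^1(W) = \dim\t_\alpha + 1 = \dim\t^0 = h^0(\Gamma_{F_v},\g^0)$, the last equality by regular semisimplicity of $\ol\rho_v(\fr_v)$. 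It remains to verify that $H^1(W) \to H^1(\g^0)$ is injective; from the long exact sequence of $0 \to W \to \g^0 \to \g^0/W \to 0$ its kernel is $\coker(H^0(\g^0) \to H^0(\g^0/W))$, and regular semisimplicity kills $H^0$ of every root space $\g_\gamma$, so $H^0(\g^0/W) = \l_\alpha$, which is hit surjectively by the projection $\t^0 \twoheadrightarrow \l_\alpha$ out of $H^0(\g^0) = \t^0$.

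For (3) and the $\g_{-\alpha}$-vanishing in (4), use the Killing-form self-duality $\g^0 \cong (\g^0)^*$, under which $W^\perp \subset \g^0$ corresponds to $(\g^0/W)^*$, so the inclusion $W \hookrightarrow \g^0$ is Tate-dual to the quotient $\g^0 \twoheadrightarrow \g^0/W^\perp$. Local duality then identifies the orthogonal of $L^{\Ram}$ in $H^1(\g^0(1))$ with the kernel of $H^1(\g^0(1)) \to H^1((\g^0/W^\perp)(1))$, which by the long exact sequence attached to $0 \to W^\perp(1) \to \g^0(1) \to (\g^0/W^\perp)(1) \to 0$ equals $\image(H^1(W^\perp(1)) \to H^1(\g^0(1)))$. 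Under the root decomposition, $W^\perp = \l_\alpha \oplus \bigoplus_{\gamma \neq -\alpha}\g_\gamma$ does not contain $\g_{-\alpha}$, giving the second half of (4). I expect the only delicate step to be the coboundary adjustment in (1): one must track the conjugation by $\exp(\epsilon X) \in \wh G(k[\epsilon])$ carefully to verify that it really does eliminate the $\l_\alpha$-component; everything else is formal local-duality and Euler-characteristic bookkeeping.
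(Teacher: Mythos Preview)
Your proposal is correct and follows precisely the approach the paper takes: the paper simply cites this as Lemma 4.11 of \cite{pat}, and your sketch faithfully reproduces that argument (the coboundary adjustment into $W$ for (1), the Euler-characteristic count and injectivity check for (2), the local-duality/long-exact-sequence identification for (3), and the root-space bookkeeping for (4)). The only points worth making fully explicit are that coboundaries in $\g^0$ have zero $\l_\alpha$-component (trivial action on $\t^0$) and coboundaries in $\g^0(1)$ have zero $\g_{-\alpha}$-component (since $\g_{-\alpha}(1)$ is the trivial module), which is what upgrades your cohomology-class statements to the cocycle-level claims in (4).
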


\subsection{Axiomatized Ramakrishna method}
We now use primes of Ramakrishna type to construct a universal deformation ring isomorphic to $\O\lb X_1,\ldots,X_{r}\rb$ with $r = \frac{[F:\Q]}2\dim_k\t^0$.
This section is an adjusted version of \S5 of \cite{pat}.
Suppose that $\ol\rho:\Gamma_{F,\Sigma}\to G(k)$ for a finite set of places $\Sigma$ has infinitesimal centralizer $\z(\g)$.
Fix a similitude character $\nu$ lifting $\ab\circ\ol\rho$.
We enforce this similitude character upon all our deformations, which has the effect of replacing $\g$ with $\g^0$ in the Galois cohomology calculations, and similarly with other subquotients of $\g$.
Let $K = F(\g^0,\mu_p)$.
We now enforce the following assumptions.
\begin{enumerate}
    \item $h^0(\Gamma_F,\g^0) = h^0(\Gamma_F,\g^0(1)) = 0$.
    \item There is a global deformation condition $\P = \{\P_v\}_{v\in\Sigma}$ consisting of liftable deformation conditions for each place $v\in\Sigma$ (all with fixed multiplier character);
    the dimensions of their tangent spaces are
    \[ \dim L_v = \begin{cases} h^0(\Gamma_{F_v},\g^0) & \text{if }v\nmid p\cdot\infty \\ h^0(\Gamma_{F_v},\g^0) + [F_v:\Q_p]\dim\b^0 & \text{if }v\mid p. \end{cases} \]
    \item $F$ is a CM field.
    \item $H^1(\Gal(K/F),\g^0) = H^1(\Gal(K/F),\g^0(1)) = 0$.
    \item Assume that (4) holds.
    For any pair of non-zero Selmer classes $\phi \in H^1_{\P^\perp}(\Gamma_{F,\Sigma},\g^0(1))$ and $\psi \in H^1_{\P}(\Gamma_{F,\Sigma},\g^0)$, we can restrict $\phi$ and $\psi$ to $\Gamma_K$, where they become homomorphisms which are nonzero by (4).
    Letting $K_\phi/K$ and $K_\psi/K$ be their respective fixed fields, we assume that $K_\phi$ and $K_\psi$ are linearly disjoint over $K$.
    \item Consider any $\phi$ and $\psi$ as in the hypothesis of item (5).
    Then there is an element $\sigma \in \Gamma_F$ such that $\ol\rho(\sigma)$ is a regular semi-simple element of $G$, the connected component of whose centralizer we denote $T$, and such that there exists a unique root $\alpha \in \Phi(G,T)$ satisfying
    \begin{enumerate}
        \item $\ol\kappa(\sigma) = \alpha\circ\ol\rho(\sigma)$;
        \item $k[\psi(\Gamma_K)]$ has an element with non-zero $\l_\alpha$-component; and
        \item $k[\phi(\Gamma_K)]$ has an element with non-zero $\g_{-\alpha}$-component.
    \end{enumerate}
\end{enumerate}

\begin{rmk}
    Compare these to the 6 assumptions in \S5 of \cite{pat}.
    The only differences are in (2), (3), and the uniqueness assumption on $\alpha$ in (6).
\end{rmk}

Under the above assumptions, we can now state our analogue of Prop.~5.2 in \cite{pat}.

\begin{prop}\label{ax-ram}
    Under assumptions (1)-(6) above, there exists a finite set of primes $Q$ disjoint from $\Sigma$, such that the universal deformation ring $R_{\ol\rho}^{\{\P_v\}_{v\in\Sigma\cup Q}}$ with $\P_v$ as in (2) for $v\in \Sigma$ and $\P_v$ the Ramakrishna condition for $v \in Q$ is isomorphic to a power series ring over $\O$ in  $\frac{[F:\Q]}2\dim\t^0$ many variables.
\end{prop}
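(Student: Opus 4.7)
The plan is to mimic the iterative Chebotarev argument of \S5 of \cite{pat}, with minor adaptations for our CM and nearly ordinary setting. First I would evaluate the right-hand side of \Cref{wilescriterion} under assumptions (1)--(3). Every infinite place of the CM field $F$ is complex, so $h^0(\Gamma_{F_v},\g^0)=\dim\g^0=2\dim\n+\dim\t^0$ there and $L_v=0$; the other contributions are $0$ for $v\in\Sigma$ with $v\nmid p\infty$ and $[F_v:\Q_p]\dim\b^0$ for $v\mid p$ by (2). Using $\dim\b^0=\dim\n+\dim\t^0$ and $\sum_{v\mid p}[F_v:\Q_p]=[F:\Q]$, this sums to
\[
h^1_\P(\Gamma_{F,\Sigma},\g^0) - h^1_{\P^\perp}(\Gamma_{F,\Sigma},\g^0(1)) = \tfrac{[F:\Q]}{2}\dim\t^0.
\]
Since a prime of Ramakrishna type contributes $0$ to the same sum, producing a finite $Q$ of such primes with $h^1_{\P^\perp}(\Gamma_{F,\Sigma\cup Q},\g^0(1))=0$ reduces the proposition to \Cref{univsize}: liftability of each $\P_v$ together with $h^2_\P=h^1_{\P^\perp}=0$ then yields a power series ring in $h^1_\P=\tfrac{[F:\Q]}{2}\dim\t^0$ variables.

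The inductive step is to shrink dual Selmer by one at a time. Given a non-zero $\phi\in H^1_{\P^\perp}$ and (if non-zero) $\psi\in H^1_\P$, assumption (4) says both restrict to non-zero homomorphisms on $\Gamma_K$, assumption (5) says their fixed fields $K_\phi,K_\psi$ are linearly disjoint over $K$, and (6) provides $\sigma\in\Gamma_F$ with $\ol\rho(\sigma)$ regular semi-simple, a \emph{unique} root $\alpha$ with $\ol\kappa(\sigma)=\alpha(\ol\rho(\sigma))$, and elements of $k[\psi(\Gamma_K)]$ and $k[\phi(\Gamma_K)]$ with non-zero $\l_\alpha$- and $\g_{-\alpha}$-components respectively. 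Disjointness (5) lets me modify $\sigma$ by $\tau\in\Gamma_K$, independently adjusting $\phi(\sigma)$ and $\psi(\sigma)$ via the image of $\tau$ in $\Gal(K_\phi/K)\times\Gal(K_\psi/K)$; a suitable choice yields $\sigma'$ with $\phi(\sigma')$ having non-zero $\g_{-\alpha}$-component and $\psi(\sigma')$ having non-zero $\l_\alpha$-component, while preserving the image of $\sigma$ in $\Gal(F_\Sigma(\g^0,\mu_p)/F)$.

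Chebotarev applied to $F_\Sigma(K_\phi,K_\psi)/F$ then produces a prime $v\not\in\Sigma$ whose Frobenius is conjugate to $\sigma'$, so $\ol\rho_v$ is of Ramakrishna type for this unique $\alpha$. By the earlier structure lemma for $L^{\Ram}$ (cocycles in $L^{\Ram,\perp,\Box}$ vanish in the $\g_{-\alpha}$-component, cocycles in $L^{\Ram,\Box}$ vanish in the $\l_\alpha$-component), evaluating on Frobenius shows $\res_v\phi\notin L_v^{\Ram,\perp}$ and $\res_v\psi\in L_v^{\Ram}$. A Poitou-Tate comparison of the Selmer groups for $\Sigma$ and $\Sigma\cup\{v\}$ then gives $h^1_{\P^\perp}(\Gamma_{F,\Sigma\cup\{v\}})=h^1_{\P^\perp}(\Gamma_{F,\Sigma})-1$, and \Cref{wilescriterion} forces $h^1_\P$ to drop by one in tandem. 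Iterating kills dual Selmer after finitely many steps, and \Cref{univsize} concludes. The main obstacle is the cocycle-juggling and restriction-at-$v$ verification in the inductive step; it is precisely where the uniqueness of $\alpha$ inserted in (6) (the only honest deviation from \cite{pat}) is essential, since otherwise the $\l_\alpha/\g_{-\alpha}$ decomposition would not be canonically attached to $v$ and the single-prime cohomology accounting could break.
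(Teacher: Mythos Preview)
Your approach is the paper's approach, but there are two slips in the inductive step that you should fix.

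First, your own setup (non-zero $\l_\alpha$-component of $\psi(\sigma')$, together with ``cocycles in $L^{\Ram,\Box}$ vanish in the $\l_\alpha$-component'') forces $\res_v\psi\notin L_v^{\Ram}$, not $\in$. Since $\psi$ is automatically unramified at $v$, this is exactly the condition $\psi|_{\Gamma_{F_v}}\notin L_v^{\unr}\cap L_v^{\Ram}$ that the paper extracts from \Cref{ceb-lemma}.

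Second, and more substantively, the line ``a Poitou--Tate comparison then gives $h^1_{\P^\perp}(\Gamma_{F,\Sigma\cup\{v\}})=h^1_{\P^\perp}(\Gamma_{F,\Sigma})-1$'' hides the only real work. Knowing $\res_v\phi\notin L_v^{\Ram,\perp}$ tells you $\phi$ drops out, but it does \emph{not} by itself rule out new dual Selmer classes ramified at $v$. The paper handles this by introducing the auxiliary condition $L_v:=L_v^{\unr}\cap L_v^{\Ram}$ (so $L_v^\perp=L_v^{\unr,\perp}+L_v^{\Ram,\perp}$) and showing that the inflation map
\[
H^1_{\P^\perp}(\Gamma_{F,\Sigma},\g^0(1))\ \longrightarrow\ H^1_{\P^\perp\cup L_v^{\perp}}(\Gamma_{F,\Sigma\cup v},\g^0(1))
\]
is an \emph{isomorphism}; this is precisely where $\res_v\psi\notin L_v$ is used (via Wiles's formula for the pair $(\P\cup L_v,\P^\perp\cup L_v^\perp)$). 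Only then does the exact sequence
\[
0\to H^1_{\P^\perp\cup L_v^{\Ram,\perp}}(\Gamma_{F,\Sigma\cup v},\g^0(1))\to H^1_{\P^\perp}(\Gamma_{F,\Sigma},\g^0(1))\to H^1(\Gamma_{F_v},\g^0(1))/L_v^{\Ram,\perp}
\]
give a strict drop. (The drop need not be exactly one, since the right-hand term has dimension $h^0(\Gamma_{F_v},\g^0)=\dim\t^0$; but any strict drop suffices for the induction.)

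One minor correction to your closing remark: the uniqueness of $\alpha$ in axiom (6) is \emph{not} what makes this proposition go through---\cite{pat} proves the analogous statement without it. Uniqueness is inserted here so that $h^0(\Gamma_{F_v},\g^0(1))=1$ at Ramakrishna primes, which is used only later in \Cref{sparse}.
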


\begin{proof}
    The proof is nearly the same as the proof of Prop.~5.2 in \cite{pat}, but we will repeat most of the proof, since we will need to refer back to it.
    Wiles's formula gives us
    \begin{align*}
        &h^1_\P(\Gamma_{F,\Sigma},\g^0) - h^1_{\P^\perp}(\Gamma_{F,\Sigma},\g^0(1)) \\
        &\quad= h^0(\Gamma_F,\g^0) - h^0(\Gamma_F,\g^0(1)) + \sum_{v\in\Sigma}(\dim L_v - h^0(\Gamma_{F_v},\g^0)),
    \end{align*}
    which under assumptions (1), (2), and (3) reduces to
    \begin{align*}
    h^1_\P(\Gamma_{F,\Sigma},\g^0) -& h^1_{\P^\perp}(\Gamma_{F,\Sigma},\g^0(1)) \\
    &= \sum_{v\mid p}[F_v:\Q_p]\dim\b^0 - \sum_{v\mid\infty}h^0(\Gamma_{F_v},\g^0) \\
    &= [F:\Q]\dim\b^0 - \frac{[F:\Q]}2\dim\g^0 \\
    &= \frac{[F:\Q]}2\dim\t^0.
    \end{align*}
    Notice that we would be done if $H^1_{\P^\perp}(\Gamma_{F,\Sigma},\g^0(1)) = 0$.
    Supposing it isn't, choose nonzero $\phi \in H^1_{\P^\perp}(\Gamma_{F,\Sigma},\g^0(1))$ and nonzero $\psi \in H^1_\P(\Gamma_{F,\Sigma},\g^0)$ (which exists by the dimension calculation above).
    By \Cref{ceb-lemma} below (which is a generalization of Lemma 5.3 in \cite{pat}) there exists a prime $w\not\in\Sigma$ such that $\ol\rho|_{\Gamma_{F_w}}$ is of Ramakrishna type, $\phi|_{\Gamma_{F_w}} \not\in L_w^{\Ram,\perp}$, and $\psi|_{\Gamma_{F_w}} \not\in L_w := L^{\unr}_w \cap L_w^{\Ram}$.
    Then $L_w^\perp = L_w^{\unr,\perp}+L_w^{\Ram,\perp}$, and there are inclusions
    \begin{align} 
        H^1_{\P^\perp\cup L_w^{\Ram,\perp}}(\Gamma_{F,\Sigma\cup w},\g^0(1)) 
        &\to H^1_{\P^\perp\cup L_w^{\perp}}(\Gamma_{F,\Sigma\cup w},\g^0(1)) \\
        H^1_{\P^\perp}(\Gamma_{F,\Sigma},\g^0(1)) 
        &\to H^1_{\P^\perp\cup L_w^{\perp}}(\Gamma_{F,\Sigma\cup w},\g^0(1)),
    \end{align}
    the second of which is an isomorphism, only using the assumption that $\psi|_{\Gamma_{F_w}} \not\in L_w$ (see the proof of Prop.~5.2 in \cite{pat}).
    Then we have an exact sequence (all cohomology is applied to $\g^0(1)$):
    \begin{equation}\label{ram-seq}
        0 \to H^1_{\P^\perp\cup L_w^{\Ram,\perp}}(\Gamma_{F,\Sigma\cup w}) \to H^1_{\P^\perp}(\Gamma_{F,\Sigma}) \to H^1(\Gamma_{F_w})/L_w^{\Ram,\perp}
    \end{equation}
    By the assumption $\phi|_{\Gamma_{F_w}} \not\in L_w^{\Ram,\perp}$, 
    \[ h^1_{\P^\perp\cup L_w^{\Ram,\perp}}(\Gamma_{F,\Sigma\cup w},\g^0(1)) < h^1_{\P^\perp}(\Gamma_{F,\Sigma},\g^0(1)).  \]
    After a finite number of applications of \Cref{ceb-lemma}, the dual Selmer group vanishes.
\end{proof}

\begin{rmk}
    Assume that we have added sufficiently many primes to $\Sigma$ such that \[ H^1_{\P^\perp}(\Gamma_{F,\Sigma},\g^0(1)) = 0. \]
    For any nonzero $\psi \in H^1_\P(\Gamma_{F,\Sigma},\g^0)$ we could apply \Cref{ceb-lemma} with $\phi = 0$, choose a prime $w$ satisfying the conclusion, then add a local condition at $w$ to $\P$ as in the proof of \Cref{ax-ram}.
    The proof goes through all the way through \cref{ram-seq}.
    Then we conclude that \[ H^1_{\P^\perp\cup L_w^{\Ram,\perp}}(\Gamma_{F,\Sigma\cup w},\g^0(1)) = 0, \] and $\psi \not\in H^1_{\P\cup L_w}(\Gamma_{F,\Sigma\cup w},\g^0)$.
    In \Cref{sparse}, we will add additional primes of ramification in this way to exclude certain cocycles from the tangent space of certain deformation problems.
\end{rmk}

\begin{lem}\label{ceb-lemma}
    Assume that $\psi \in H^1(\Gamma_{F,\Sigma},\g^0)$ is nonzero and $\phi \in H^1(\Gamma_{F,\Sigma},\g^0(1))$.
\begin{enumerate}
    \item If $\phi\neq 0$, then assume $\psi$ and $\phi$ satisfy axioms (5) and (6) listed above.
    Then there are infinitely many primes $w \not\in \Sigma$ such that $\ol\rho|_{\Gamma_{F_w}}$ is of Ramakrishna type and satisfying
    \[ \phi|_{\Gamma_{F_w}} \not\in L_w^{\Ram,\perp}\quad\text{and}\quad \psi|_{\Gamma_{F_w}} \not\in L^{\unr}_w \cap L_w^{\Ram}. \]
    \item If $\phi = 0$, then assume that $\psi$ satisfies axiom (6) (without any conditions on $\phi$).
    Then there exists infinitely many primes $w\not\in \Sigma$ such that $\ol\rho|_{\Gamma_{F_w}}$ is of Ramakrishna type and $\psi|_{\Gamma_{F_w}} \not\in L^{\unr}_w \cap L_w^{\Ram}.$
\end{enumerate}
\end{lem}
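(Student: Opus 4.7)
The plan is a standard Chebotarev-density argument in the style of Ramakrishna: we construct an element $\sigma\tau \in \Gamma_F$ whose image under $\ol\rho$ is (conjugate to) the regular semisimple element produced by axiom (6), and whose values under $\psi$ and $\phi$ have nonzero $\l_\alpha$- and $\g_{-\alpha}$-components respectively. Any prime $w$ whose Frobenius is conjugate to $\sigma\tau$ in a suitable finite Galois extension of $F$ then satisfies all the required conclusions.

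More precisely, in case (1), first use axiom (6) to produce $\sigma \in \Gamma_F$, a torus $T$, and a unique root $\alpha$ with $\alpha\circ\ol\rho(\sigma) = \ol\kappa(\sigma)$, along with the existence of elements $x_\psi \in \psi(\Gamma_K)$ with nonzero $\l_\alpha$-component and $x_\phi \in \phi(\Gamma_K)$ with nonzero $\g_{-\alpha}$-component. Now, axiom (5) gives linear disjointness of $K_\psi$ and $K_\phi$ over $K$, so $\psi|_{\Gamma_K}$ and $\phi|_{\Gamma_K}$ (which are genuine homomorphisms because $\Gamma_K$ acts trivially on both $\g^0$ and $\mu_p$) can be prescribed independently on $\Gamma_K$; choose $\tau \in \Gamma_K$ with $\psi(\tau) = x_\psi$ and $\phi(\tau) = x_\phi$. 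Since $\tau$ acts trivially on $\g^0$ and $\mu_p$, we have $\psi(\tau^n) = n\,\psi(\tau)$ and $\phi(\tau^n) = n\,\phi(\tau)$. The cocycle identities give
\begin{align*}
\psi(\sigma\tau^n) &= \psi(\sigma) + n\,\Ad(\ol\rho(\sigma))\,\psi(\tau), \\
\phi(\sigma\tau^n) &= \phi(\sigma) + n\,\ol\kappa(\sigma)\,\Ad(\ol\rho(\sigma))\,\phi(\tau).
\end{align*}
Since $\Ad(\ol\rho(\sigma))$ fixes $\l_\alpha$ pointwise (as $\l_\alpha \subset \t$) and acts on $\g_{-\alpha}$ by $\alpha(\ol\rho(\sigma))^{-1} = \ol\kappa(\sigma)^{-1}$, the $\l_\alpha$-component of $\psi(\sigma\tau^n)$ is an affine function of $n$ with nonzero slope (equal to the $\l_\alpha$-component of $x_\psi$), and similarly for the $\g_{-\alpha}$-component of $\phi(\sigma\tau^n)$. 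At most two residues of $n \pmod p$ are therefore forbidden, so for $p \gg 0$ (in particular $p \geq 5$) we can choose $n$ so that both components are nonzero.

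Let $M/F$ be a finite Galois extension containing $K_\psi$ and $K_\phi$ and through which $\ol\rho$, $\psi$, and $\phi$ factor. By Chebotarev there are infinitely many primes $w \notin \Sigma$ whose Frobenius in $\Gal(M/F)$ is conjugate to $\sigma\tau^n$; for any such $w$, the representation $\ol\rho|_{\Gamma_{F_w}}$ is unramified with Frobenius conjugate to $\ol\rho(\sigma)$, and axiom (6)(a) plus the uniqueness of $\alpha$ shows that $\ol\rho|_{\Gamma_{F_w}}$ is of Ramakrishna type. The $\l_\alpha$-component of the Frobenius value of an unramified cocycle $\psi|_{\Gamma_{F_w}}$ is well-defined modulo coboundaries, because coboundaries contribute $(\Ad(\ol\rho(\fr_w))-1)v$ which annihilates $\t$; similarly the $\g_{-\alpha}$-component of $\phi(\fr_w)$ is well-defined modulo coboundaries, since $(\ol\kappa(\fr_w)\Ad(\ol\rho(\fr_w))-1)v$ annihilates $\g_{-\alpha}$ precisely because $\alpha(\ol\rho(\fr_w)) = \ol\kappa(\fr_w)$. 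The structural description of $L_w^{\Ram,\Box}$ and $L_w^{\Ram,\perp,\Box}$ then forces $\psi|_{\Gamma_{F_w}} \notin L_w^{\Ram} \supseteq L_w^{\unr}\cap L_w^{\Ram}$ and $\phi|_{\Gamma_{F_w}} \notin L_w^{\Ram,\perp}$, as required.

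Case (2), where $\phi = 0$, follows from the same argument with $\tau$ chosen only to arrange the $\psi$-condition, and no appeal to axiom (5) is needed. The main obstacle is the cohomological bookkeeping in the last step: verifying that the $\l_\alpha$- and $\g_{-\alpha}$-components at Frobenius are genuine invariants of the cohomology classes and not just of the representing cocycles. Once this is pinned down, everything else is the standard interplay between the cocycle identity, the weight decomposition under $\Ad(t)$, and Chebotarev.
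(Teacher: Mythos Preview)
Your argument is correct and is precisely the standard Chebotarev argument of Lemma~5.3 in \cite{pat}, which is exactly what the paper invokes for this lemma. One minor point to tighten: cocycles do not literally ``factor through'' a finite quotient, so at the Chebotarev step you should package $(\psi,\phi,\ol\rho,\ol\kappa)$ into a single homomorphism $\Gamma_F \to (\g^0 \oplus \g^0(1)) \rtimes (G(k)\times k^\times)$ and note that the $\l_\alpha$- and $\g_{-\alpha}$-components (taken with respect to the conjugated torus) are invariants of \emph{conjugacy classes} in that semidirect product, not merely of cohomology classes---this is what makes the passage from $\sigma\tau^n$ to $\fr_w$ legitimate.
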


\begin{proof}
    The proof of (1) is identical to the proof of Lemma 5.3 in \cite{pat}, even though we are making slightly more general assumptions.
    The proof of (2) is similar but easier; just ignore all mention of $\phi$.
\end{proof}

\subsection{Representations with large image}
As in \S6 of \cite{pat}, we can satisfy the hypotheses of \Cref{ax-ram} by taking a generous image assumption (up to the existence of some local conditions).
Precisely, we have the following theorem.
Compare to Theorem 6.4 in \cite{pat}.

\begin{prop}\label{large-image}
    Let $F$ be imaginary CM with $[F(\mu_p):F] = p-1$, and let $\ol\rho:\Gamma_{F,\Sigma} \to G(k)$ satisfy
    \begin{enumerate}
        \item There is a subfield $k' \subset k$ such that 
        \[ \ol{(G^{\der})}^{\simpc}(k') \subset \ol\rho(\Gamma_F) \subset Z_G(k)\cdot G(k'). \]
        \item $p-1$ is greater than the maximum of $8\# Z_{(G^{\der})^{\simpc}}$ and
        \[
        \begin{cases}
            (h-1)\#Z_{(G^{\der})^{\simpc}} & \text{if $\#Z_{(G^{\der})^{\simpc}}$ is even; or} \\
            (2h-2)\#Z_{(G^{\der})^{\simpc}} & \text{if $\#Z_{(G^{\der})^{\simpc}}$ is odd.}
        \end{cases}
        \]
        \item For all places $v \in \Sigma$ not dividing $p\cdot\infty$, $\ol\rho_v$ satisfies a liftable local deformation condition $\P_v$ with tangent space of dimension $h^0(\Gamma_{F_v},\g^0)$.
        \item For all places $v\mid p$, $\ol\rho_v$ is nearly ordinary, satisfying the conditions (REG) and (REG*).
    \end{enumerate}
    Then there exists a finite set of primes $Q$ disjoint from $\Sigma$, such that the universal deformation ring $R_{\ol\rho}^{\{\P_v\}_{v\in\Sigma\cup Q}}$ with $\P_v$ as above for $v\in \Sigma$ and $\P_v$ the Ramakrishna condition for $v \in Q$ is isomorphic to $\O\lb X_1,\ldots,X_{\frac{[F:\Q]}2\dim t}\rb$.
\end{prop}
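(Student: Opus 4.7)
The plan is to verify hypotheses (1)–(6) of \Cref{ax-ram} under the stated assumptions and then invoke that proposition directly. Hypotheses (2) and (3) are essentially built into the statement: (3) is immediate since $F$ is imaginary CM, while for (2) the liftable local conditions at finite $v\nmid p\cdot\infty$ are given and have tangent space of dimension $h^0(\Gamma_{F_v},\g^0)$ by assumption; the nearly ordinary condition at $v\mid p$ supplied by assumption (4), which is liftable and satisfies $\dim L_v = h^0(\Gamma_{F_v},\g^0) + [F_v:\Q_p]\dim\b^0$ by the computations of \Cref{ss-nearly-ord-defs}, covers primes above $p$; and at archimedean places we take $L_v = 0$, valid since $p$ is odd.

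Hypotheses (1) and (4) are cohomology computations that leverage the large image assumption and the explicit bound on $p$. The very-good-prime assumption makes $\g^0$ absolutely irreducible as a representation of $(G^{\der})^{\simpc}$, so the containment $\ol\rho(\Gamma_F) \supseteq \ol{(G^{\der})}^{\simpc}(k')$ yields $h^0(\Gamma_F,\g^0)=0$, and combining with $[F(\mu_p):F]=p-1 > 1$ gives $h^0(\Gamma_F,\g^0(1))=0$ as well. The vanishing of $H^1(\Gal(K/F),\g^0)$ and of $H^1(\Gal(K/F),\g^0(1))$ is precisely what the bound on $p$ in (2) is tailored for; I would transcribe or invoke the argument of \S6 of \cite{pat}, which carries over verbatim since the image hypothesis has not been weakened.

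For hypothesis (5), any nontrivial intersection $K_\phi \cap K_\psi \supsetneq K$ would produce a common $\ol\rho(\Gamma_F)$-equivariant subquotient of $\g^0$ and $\g^0(1)$; both are irreducible by the preceding, and they are non-isomorphic because the cyclotomic twist is nontrivial on $\Gamma_F$ (from $[F(\mu_p):F]=p-1$), so no such common subquotient can exist, forcing the required linear disjointness.

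The principal obstacle, and the only meaningful departure from the proof of Theorem 6.4 of \cite{pat}, is hypothesis (6), specifically the \emph{uniqueness} of the root $\alpha$ satisfying $\alpha\circ\ol\rho(\sigma) = \ol\kappa(\sigma)$. Following \cite{pat}, an application of Chebotarev to $\Gal(K_\phi K_\psi/F)$, using linear disjointness from (5), produces $\sigma$ with $\ol\rho(\sigma)$ regular semisimple satisfying some root-eigenvalue condition together with the component constraints on $k[\psi(\Gamma_K)]$ (nonzero $\l_\alpha$-component) and $k[\phi(\Gamma_K)]$ (nonzero $\g_{-\alpha}$-component). To further force uniqueness of $\alpha$, I would restrict $\ol\rho(\sigma)$ to lie in the Zariski-dense complement of the finite union $\bigcup_{\beta\neq\alpha}\{t\in T : (\alpha-\beta)(t)=1\}$; for $p$ sufficiently large with respect to the root system, the large image hypothesis ensures abundantly many elements in this complement, and a Chebotarev argument coupling the uniqueness cut-out with the existing component constraints produces the required $\sigma$. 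The delicate step is performing this bookkeeping without disturbing the component constraints on $\psi$ and $\phi$, but this is a root-system genericity argument that should proceed routinely for $p$ large.
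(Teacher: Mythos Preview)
Your approach matches the paper's: verify axioms (1)--(6) of \Cref{ax-ram}, deferring (1)--(5) to \S6 of \cite{pat} and isolating the uniqueness requirement in (6) as the only new wrinkle. The one substantive difference is how you secure that uniqueness. You propose a soft genericity argument---avoid the finite union of sub-tori $\{(\alpha-\beta)(t)=1\}$ and argue that for $p$ large the complement meets the image of $\ol\rho$---whereas the paper makes an explicit choice: in the proof of Lemma~6.7 of \cite{pat}, replace the element $x = 2\rho^\vee(t)$ (for which \emph{every} simple root satisfies $\alpha(x)=t^2$) by $x = (4\rho^\vee-\alpha)(t)$, which singles out the chosen simple root $\alpha$ uniquely among all roots. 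The explicit construction is cleaner and visibly works under the stated bound on $p$ in hypothesis (2), with no appeal to an unspecified ``$p$ sufficiently large''; your avoidance argument is plausible but would require a separate check that the good locus meets the image of $\ol\rho$ under that same bound, and you yourself flag the coupling with the $\phi,\psi$ component constraints as ``delicate.''
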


\begin{proof}
    The hypotheses clearly imply axioms (1)-(3) of \Cref{ax-ram}.
    The proofs that axioms (4)-(5) of \Cref{ax-ram} are satisfied is exactly the same as in \S6 of \cite{pat}.
    The proof that axiom (6) holds is the same as the proof of Lemma 6.7 in \cite{pat}, except that we use $x = (4\rho^\vee-\alpha)(t)$ instead of $x = 2\rho^\vee(t)$.
    This ensures that there is a unique root (namely $\alpha$) for which $\alpha\circ\ol\rho(\sigma) = \ol\kappa(\sigma)$.
    If one uses $2\rho^\vee(t)$, then every simple root $\alpha$ satisfies this identity.
\end{proof}

\begin{rmk}
    In fact, one can check that under the large image hypothesis, properties (5) and (6) hold for any cocycles, not only for Selmer classes.
    We will use this in \Cref{sparse}.
\end{rmk}
\section{Weights in the case $G = \GL_2$}\label{gl2-weights}

\subsection{Introduction}
\Cref{large-image} produces universal deformation rings of the desired size and shape, but so far we haven't said anything about the Hodge-Tate weights of its $\ol \Q_p$-points.
We start with the example $G = \GL_2$ and $F/\Q$ quadratic imaginary for concreteness, and to address some subtleties that aren't addressed in \cite{cm}, before considering more general groups $G$ and number fields $F$.

\subsection{Hodge-Tate cocharacter}
Suppose that $\rho:\Gamma_{\Q_p} \to \GL_2(\ol\Q_p)$ is a Hodge-Tate representation of $\Gamma_{\Q_p}$ acting on a 2-dimensional $\ol\Q_p$-vector space $V$, with Hodge-Tate weights $a,b$.
One way to record the weights which generalizes well to more other algebraic groups is to use a Hodeg-Tate cocharacter, unique up to swapping $a,b$ (i.e.~up to Weyl conjugation):
\[ \mu_{HT}:\G_m \to \GL_2,\qquad \mu_{HT}(t) = \bpm t^a & 0 \\ 0 & t^b \epm. \]
We will briefly review Hodge-Tate cocharacters, as they will play a major role in what follows.

\begin{defn}
For a reductive group $G$, and a representation $\rho:\Gamma_{\Q_p} \to G(\ol\Q_p)$ we say that $\rho$ is \textbf{Hodge-Tate} if for every representation $G \to \GL(V)$ the composition $\Gamma_{\Q_p} \to G(\ol\Q_p) \to \GL(V)$ is Hodge-Tate.
\end{defn}

\begin{lem}
If $\rho:\Gamma_{\Q_p} \to G(\ol\Q_p)$ is Hodge-Tate, then there exists a cocharacter $\mu_{HT} \in X_\bullet(T)$ called the \textbf{Hodge-Tate cocharacter} such that for any representation $r:G \to \GL(V)$ of $G$, 
\[ V_{\C_p} = \bigoplus_{i\in\Z}V^{\mu_{HT}(t) = t^i} \]
is the Hodge-Tate decomposition of $r\circ\rho$.
\end{lem}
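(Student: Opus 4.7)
The plan is to proceed via Tannakian duality. Recall that giving a cocharacter $\mu:\G_m \to G$, up to $G$-conjugation, is equivalent to giving a $\Z$-grading of the standard fiber functor on $\Rep(G)$ that is compatible with tensor products and duals. So the task reduces to producing such a grading out of the Hodge-Tate data of $\rho$.

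First I would construct the candidate tensor-graded fiber functor. For each algebraic representation $r:G \to \GL(V)$, the composite $r\circ\rho:\Gamma_{\Q_p}\to\GL(V)(\ol\Q_p)$ is Hodge-Tate by hypothesis, hence $V_{\C_p} := V\otimes_{\ol\Q_p}\C_p$ admits a canonical Hodge-Tate decomposition $V_{\C_p} = \bigoplus_{i\in\Z}V^i$. I would check that the assignment $V \mapsto (V_{\C_p},\{V^i\})$ defines a $\Q_p$-linear exact tensor functor from $\Rep_{\ol\Q_p}(G)$ to the category of $\Z$-graded $\C_p$-vector spaces, compatibly with the ordinary fiber functor $V\mapsto V_{\C_p}$. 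Functoriality on morphisms is immediate from the naturality of the Hodge-Tate decomposition (any $G$-equivariant map is in particular $\Gamma_{\Q_p}$-equivariant). The tensor compatibility $(V\otimes W)^n = \bigoplus_{i+j=n}V^i\otimes W^j$ and duality compatibility are exactly the statement that Hodge-Tate decompositions commute with tensor products and duals, which is standard in $p$-adic Hodge theory.

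Next I would apply Tannakian duality. A $\Z$-grading on the fiber functor $\omega_{\C_p}:\Rep_{\ol\Q_p}(G)\to\Vc_{\C_p}$ is the same as a homomorphism $\G_{m,\C_p} \to \underline{\Aut}^{\otimes}(\omega_{\C_p}) = G_{\C_p}$. This produces a cocharacter $\mu:\G_{m,\C_p}\to G_{\C_p}$, a priori only well-defined up to $G(\C_p)$-conjugation. Since $G$ is split reductive over $W(k)$, every cocharacter of $G_{\C_p}$ is $G(\C_p)$-conjugate to one that factors through the split torus $T_{\C_p}$, and any such cocharacter arises from a unique element of $X_\bullet(T)$ up to the Weyl group action. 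This gives $\mu_{HT} \in X_\bullet(T)/W$, and by construction for any $r:G\to\GL(V)$ the grading $V_{\C_p} = \bigoplus V^{\mu_{HT}(t)=t^i}$ recovers the Hodge-Tate decomposition of $r\circ\rho$.

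The main technical input is the tensor-compatibility of Hodge-Tate decompositions for a family of representations arising from a single $G$-valued representation; this is where the ``Hodge-Tate'' hypothesis on $\rho$ (meaning for every $G$-representation) is essential, so that we never leave the Hodge-Tate world when taking tensor products and subquotients. The remaining subtlety is the well-definedness of $\mu_{HT}$ as an element of $X_\bullet(T)$: it is only canonical modulo the Weyl group action on the cocharacter lattice, which is the standard ambiguity and matches the phrasing of the lemma.
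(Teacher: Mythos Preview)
Your proposal is correct and follows essentially the same approach as the paper: both invoke Tannakian formalism, observing that Hodge--Tate decomposition defines a $\otimes$-functor from $\Rep(G)$ to $\Z$-graded $\C_p$-vector spaces (equivalently $\Rep_{\C_p}(\G_m)$) compatible with the fiber functor, which then yields the cocharacter $\mu_{HT}:\G_m\to G$. Your write-up simply spells out in more detail the tensor and duality compatibilities that the paper leaves implicit.
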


\begin{proof}
The proof is by Tannakian formalism \cite[II]{dm}.
First, note that $\Rep_{\ol\Q_p}(G) \cong \Rep_{\C_p}(G)$, and $\Rep_{\C_p}(\G_m)$ is isomorphic to the category of $\Z$-graded $\C_p$-vecor spaces.
``Hodge-Tate decomposition'' induces a $\otimes$-functor, which commutes with fiber functors:
\[ 
\begin{tikzcd}[column sep=0]
\Rep_{\C_p}(G) \ar[rr] \ar[dr] && \Rep_{\C_p}(\G_m) \ar[dl] \\
& \Vc_{\C_p}
\end{tikzcd}
\]
Then Tannakian formalism produces a homomorphism
\[ \mu_{HT}:\G_m \to G \]
which satisfies the conclusion of the lemma.
\end{proof}

\begin{rmk}
    $\mu_{HT}$ is only well defined up to conjugation.  If we fix a split maximal torus $T$ of $G$, then $\mu_{HT}$ is a well defined element of $X_\bullet(T)/W_G$, where $W_G$ denote the Weyl group of $(G,T)$.
\end{rmk}

\begin{rmk}
    Similarly we say $\rho$ is \textbf{de Rham} if for every representation $G \to \GL(V)$ the composition $\Gamma_{\Q_p}\xrightarrow{\rho}G(\ol\Q_p)\to GL(V)$ is de Rham.
    Notice that if $\rho$ is de Rham, then it is automatically Hodge-Tate.
\end{rmk}

\subsection{Weights in weight space}
Returning to the case $G = \GL_2$, consider the nearly ordinary setup.
Namely that (perhaps after conjugation by an element of $\wh G$) $\rho: \Gamma_{\Q_p} \to B(\ol\Q_p)$.
If $\rho$ is Hodge-Tate then the result will have the form
\[ \bpm \kappa^a\chi & * \\ 0 & \kappa^b \epm \]
for some integers $a,b$ (the Hodge-Tate weights) and some finite order character $\chi$.
Since the deformations we consider have fixed determinant, one of $a,b$ determines the the other.
Or, both can be determined from the weight of the quotient $\kappa^a\chi/\kappa^b$.

Suppose we are in the setup of \cite[\S7]{cm}, namely we have a quadratic imaginary number field $F/\Q$ with $p$ split, and a representation $\rho: \Gamma_F \to \GL_2(\ol\Q_p)$ which is nearly ordinary at both places $v,\ol v$ dividing $p$.
Let 
\[ \O_{F,p}^\times = (\O_F\otimes \Z_p)^\times \cong \prod_{v\mid p}\O_{F_v}^\times \]
(in the present case $\O_{F,p}^\times\cong \Z_p^\times \times \Z_p^\times$).
Consider the functor from the category of rigid analytic spaces to abelian groups which sends
\[ X \mapsto \Hom(\O_{F,p}^\times,\O_X(X)^\times). \]

\begin{prop}
The functor $X \mapsto \Hom(\O_{F,p}^\times,\O_X(X)^\times)$ is representable. We call the representing rigid space \textbf{weight space}, and denote it as $\W_F$ or just $\W$ when $F$ is clear.
\end{prop}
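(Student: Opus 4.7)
The plan is to explicitly construct the representing rigid space by decomposing $\O_{F,p}^\times$ into tractable topological pieces. Throughout I read $\Hom$ as meaning continuous homomorphisms, since $\O_{F,p}^\times$ carries its natural $p$-adic topology. Writing $\O_{F,p}^\times \cong \prod_{v\mid p}\O_{F_v}^\times$, a Teichm\"uller splitting of $\O_{F_v}^\times \twoheadrightarrow k_v^\times$ together with the $p$-adic logarithm on a small neighborhood of $1$ yields a topological isomorphism $\O_{F,p}^\times \cong \Delta \times \Z_p^d$, where $\Delta$ is a finite abelian group and $d = \sum_{v\mid p}[F_v:\Q_p] = [F:\Q]$. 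Since continuous $\Hom$ commutes with products in the source, the functor factors as the product of $\Hom(\Delta,\O_X(X)^\times)$ and $d$ copies of $\Hom_{\mathrm{cts}}(\Z_p,\O_X(X)^\times)$, reducing the problem to representing these two factor functors.

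The finite factor is represented by the Cartier dual $\wh\Delta$, concretely a finite disjoint union of rigid points after base change to a field containing the $(\#\Delta)$-th roots of unity. The essential content is representing $R \mapsto \Hom_{\mathrm{cts}}(\Z_p, R^\times)$ on affinoid $E$-algebras. A continuous character $\chi$ is determined by $u := \chi(1) \in R^\times$, and continuity on $\Z_p$ forces $\chi(p^n) = u^{p^n} \to 1$, which is equivalent to $u-1$ being topologically nilpotent in $R$. Conversely, any such $u$ defines a continuous character by $p$-adic continuity of $n \mapsto u^n$, first on $\Z$ and then extended to $\Z_p$. The assignment $\chi \mapsto \chi(1) - 1$ is functorial in $R$ and identifies $\Hom_{\mathrm{cts}}(\Z_p, -)$ with the open unit disk $\D$, viewed as the admissible open subspace $\{|T|<1\}$ of the closed unit disk, or equivalently the rigid generic fiber of $\Spf \O\lb T\rb$.

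Assembling these pieces, the functor is represented by $\W_F := \wh\Delta \times \D^d$, a finite disjoint union of open $d$-dimensional polydiscs. The main point requiring care---and the only real obstacle---is correctly identifying continuous $\Z_p$-characters with the \emph{open} unit disk rather than the closed one; this is forced by the requirement that $\chi$ restrict to a character trivial modulo any prescribed power of the uniformizer on the closed subgroups $p^n \Z_p$ for $n \gg 0$. Everything else is formal from the decomposition of $\O_{F,p}^\times$ and the compatibility of $\Hom$ with products.
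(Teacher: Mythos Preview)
Your argument is correct and is precisely the standard construction. The paper itself does not give a proof: it simply writes ``See \cite[\S2]{buzzard}'' and moves on. What you have written is essentially the content of that reference, spelled out---decompose $\O_{F,p}^\times$ as a finite group times $\Z_p^{[F:\Q]}$, represent the finite part by its Cartier dual, and identify continuous $\Z_p$-characters with the open unit disk via $\chi \mapsto \chi(1)-1$. The paper later relies on exactly the description you obtain, remarking (again citing Buzzard) that $\W$ ``is isomorphic to a finite union of open $[F:\Q]$-balls,'' so your explicit proof dovetails with how weight space is used downstream.
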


\begin{proof}
See \cite[\S2]{buzzard}
\end{proof}

Now assume that $\rho_v$ and $\rho_{\ol v}$ have the same Hodge-Tate weights.
Then if
\[ \rho_v = \bpm \chi_{v,1} & * \\ 0 & \chi_{v,2} \epm \qquad \rho_{\ol v} = \bpm \chi_{\ol v,1} & * \\ 0 & \chi_{\ol v,2} \epm, \]
and \[ \chi_v = \chi_{v,1}/\chi_{v,2}, \qquad \chi_{\ol v} = \chi_{\ol v,1}/\chi_{\ol v,2}, \]
then $\chi_v$ either has the same Hodge-Tate weight as $\chi_{\ol v}$ or $\chi_{\ol v}^{-1}$.
Assume for simplicity that $\chi_v$ and $\chi_{\ol v}$ have the same Hodge-Tate weight (the other case is similar).
Then $\chi_v$ and $\chi_{\ol v}$ assemble to produce a $\ol\Q_p$-point in $\W$ in the following way:
since each character $\phi$ of $\Gamma_{\Q_p}$ factors through the abelianization $\Gamma_{\Q_p}^{\ab}$, the restriction of $\phi$ to inertia gives a map out of $I_{\Q_p}^{\ab} \cong \Z_p^\times$, via class field theory.
For example, $\kappa^n$ gives the map $x \mapsto x^n$ (under a suitable normalization of class field theory).
If $a$ is the Hodge-Tate weight of both $\chi_v$ and $\chi_{\ol v}$ then they induce a map
\[ \O_{F,p}^\times \to \ol\Q_p^\times, \qquad x \mapsto x^a\ol x^a\varepsilon(x) = N(x)^a\varepsilon(x), \]
where $x\mapsto x$ and $x\mapsto \ol x$ denote the embeddings $F\to \ol\Q_p$ extended to $\O_{F,p}^\times$, $N:\O_{F,p}^\times \to \Z_p^\times$ is the induced norm map, and $\varepsilon$ is a finite order character.
\begin{defn}
We say a weight $\chi:\O_{F,p}^\times \to \ol\Q_p^\times$ is
\begin{itemize}
    \item \textbf{algebraic} if it is of the form $\chi(x) = x^a\ol x^b$ for integers $a$ and $b$,
    \item \textbf{locally algebraic} if it is the product of an algebraic weight and a finite order character,
    \item \textbf{parallel} if it is algebraic and $a=b$, and
    \item \textbf{locally parallel} if it is the product of a parallel weight and a finite order character.
\end{itemize}
\end{defn}

Let $\W^{\pr}$ denote the space of locally parallel weights, and $\W_0$ the space of parallel weights.

\subsection{A 1-parameter family in weight space}
The deformation theory in \cite[\S7]{cm} produces a universal deformation 
\[ \rho: \Gamma_F \to \GL_2(\O\lb X\rb), \] 
which is nearly ordinary at $v,\ol v$.
Here $\O = W(k)$ is the ring of Witt vectors for the finite field $k$ appearing in the definition of the residual representation.
After suitable conjugation by $\wh G$, the restrictions $\rho_v,\rho_{\ol v}$ for $v,\ol v\mid p$ have image contained in the Borel $B(\O\lb X\rb)$.
Then with the definitions above, $\chi_v$ and $\chi_{\ol v}$ assemble to 
\[ \chi_v\times \chi_{\ol v}: \O_{F,p}^\times \to \O\lb X\rb^\times \hookrightarrow \O_{\D^1}(\D^1)^\times, \]
where $\D^n$ is the rigid open unit $n$-ball.
By definition of $\W$, this produces a map of rigid spaces
\[ \gamma:\D^1 \to \W. \]
We get a similar map $\gamma'$ by using $\chi_v$ and $\chi_{\ol v}^{-1}$.
The claim of \cite[\S7]{cm} is that $\gamma^{-1}(\W^{\pr})$ and $\gamma'^{-1}(\W^{\pr})$ are finite, i.e.~$\rho$ has finitely many parallel weight specializations.
There are a couple subtleties which aren't addressed in \cite{cm}, one of which is explained in \cite{loeffler}, and which we will also explain.
See \cite{loeffler} for more details.

\subsection{The topology of weight space}

In \cite[\S3]{loeffler} it is shown that $\W$ can be described as the generic fiber of the formal scheme $\Spf(\Z_p\lb\O_{F,p}^\times\rb)$, and using \cite[7.1.9]{de-jong} that there is a natural bijection between the $\ol\Q_p$-points of $W := \Spec(\Z_p\lb\O_{F,p}^\times\rb)$ and $\W$.
This allows us to think about the set of points of $\W$ under either a rigid topology or a Zariski topology.
$\W$ is a 2-dimensional space, in fact it is isomorphic to a finite union of open 2-balls (see \cite[Lemma 2]{buzzard}).
Recall that $\W_0$ consists of weights of the form
\[ x \mapsto (x\ol x)^n, \]
in other words weights which factor through the norm map
\[ N:\O_{F,p}^\times \to \Z_p^\times. \]

\begin{lem}\label{gl2-closed-lemma}
    Both the Zariski and rigid closures of $W_0$ in $W(\ol\Q_p)$ are 1-dimensional.
\end{lem}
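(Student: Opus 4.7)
The plan is to sandwich the closures of $W_0$ between a $1$-dimensional ambient subvariety (for the upper bound) and a rigid-analytically dense family inside that subvariety (for the lower bound).

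For the upper bound, any parallel weight $x\mapsto (x\ol x)^n$ factors through the norm map $N:\O_{F,p}^\times \to \Z_p^\times$, $x\mapsto x\ol x$. Pullback along $N$ induces a closed immersion
\[ W^N := \Spec(\Z_p\lb \Z_p^\times\rb) \hookrightarrow W, \]
and $W_0 \subset W^N(\ol\Q_p)$. Using $\Z_p^\times \cong \mu_{p-1}\times(1+p\Z_p)$ one gets $\Z_p\lb \Z_p^\times\rb \cong \Z_p[\mu_{p-1}]\lb T\rb$, so the rigid generic fiber of $W^N$ is a disjoint union of $p-1$ open unit $1$-discs, in particular purely $1$-dimensional (this is just the $\Q$-case of \cite[Lemma 2]{buzzard}). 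Both the Zariski and rigid closures of $W_0$ in $W(\ol\Q_p)$ are therefore contained in $W^N(\ol\Q_p)$ and have dimension at most $1$.

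For the lower bound, parametrize $W_0 = \{\chi_n : n\in\Z\}$ with $\chi_n(x) = (x\ol x)^n$; this is an infinite set. For the Zariski closure, an infinite subset of the $1$-dimensional Noetherian scheme $\Spec(\Z_p\lb \Z_p^\times\rb \otimes \Q_p)$ cannot be closed of dimension $0$, so the closure must contain the generic point of at least one $1$-dimensional irreducible component of $W^N$, forcing dimension $\geq 1$. For the rigid closure, fix a residue class $n_0 \bmod (p-1)$: the $\chi_n$ with $n\equiv n_0$ land in a single component of $W^N$, and under the standard identification of that component with the open disc $\D^1 \cong 1+p\Z_p$ via $s \mapsto (x\mapsto x^s)$ on $1+p\Z_p$ (with a fixed character on $\mu_{p-1}$), these $\chi_n$ correspond to the arithmetic progression $\{n \in \Z : n\equiv n_0 \pmod{p-1}\} \subset \Z_p$, which is $p$-adically dense. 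Hence the rigid closure contains the entire component, which has dimension $1$.

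The main technical nuisance is just the bookkeeping for the $p-1$ connected components of $W^N$ indexed by characters of $\mu_{p-1}$; once one checks that each component is hit infinitely often by parallel weights, the density of any arithmetic progression in $\Z_p$ supplies the rigid accumulation and the fact that a $0$-dimensional closed subscheme is finite supplies the Zariski accumulation. I do not anticipate a genuine obstacle beyond this.
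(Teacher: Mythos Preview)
Your proof is correct and follows essentially the same approach as the paper: both use that parallel weights factor through the norm map $N$, giving a closed $1$-dimensional subspace of $W$ containing $W_0$, and then argue density of the integer powers inside that subspace. The paper is slightly more cautious in writing the target as $\Z_p\lb U\rb$ with $U = \operatorname{im}(N)$ (which here equals $\Z_p^\times$ since $p$ splits and $N(a,b)=ab$ is surjective), whereas you jump straight to $\Z_p\lb\Z_p^\times\rb$; conversely, your treatment of density is more explicit, spelling out the component-by-component argument and the $p$-adic density of arithmetic progressions where the paper simply asserts that integer powers approximate arbitrary characters.
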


\begin{proof}
    Let $U$ denote the image of the norm map $N:\O_{F,p}^\times \to \Z_p^\times$.
    Then $U$ is finite index in $\Z_p^\times$, thus 1-dimensional (e.g.~as a $p$-adic Lie group).
    Thus, the norm induces a surjective homomorphism
    \[ \Z_p\lb\O_{F,p}^\times\rb \twoheadrightarrow \Z_p\lb U\rb, \]
    and the inclusion of a 1-dimensional (Zariski) closed subspace on generic fibers
    \[ \Spec(\Z_p\lb U\rb)(\ol\Q_p) \hookrightarrow W(\ol\Q_p). \]
    Arbitrary characters of $U$ can be approximated by characters of the form $x\mapsto x^n$ with $n\in\Z$, so $\W_0$ is dense for both topolgies in the image of $\Spec(\Z_p\lb U\rb)(\ol \Q_p)$ in $W(\ol\Q_p)$.
\end{proof}

\begin{prop}\label{gl2-closed}
In the Zariski topology, the closure of $\W^{\pr}$ is a 2-dimensional subspace of $W(\ol\Q_p)$.
In the rigid analytic topology, the closure of $\W^{\pr}$ is a 1-dimensional subspace of $\W$.
\end{prop}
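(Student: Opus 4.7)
The plan is to analyze the two topologies separately, starting from the decomposition
\[ \W^{\pr} = \bigcup_{\varepsilon \in E} \W_0 \cdot \varepsilon, \]
where $E$ denotes the countably infinite set of finite order characters of $\O_{F,p}^\times$. By \Cref{gl2-closed-lemma}, each translate $\W_0 \cdot \varepsilon$ has closure in either topology equal to a $1$-dimensional closed analytic subvariety $V_\varepsilon \subset W$, which in the coordinates $(z,w) = (\chi_v(1+p), \chi_{\bar v}(1+p))$ on a component is cut out by $z = \zeta w$ for some $\zeta \in \mu_{p^\infty}$ depending on $\varepsilon$.

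For the Zariski direction, I would show that $\W^{\pr}$ is Zariski-dense in $W$, which gives $2$-dimensional Zariski closure. Taking the trivial parallel weight ($n=0$) shows $E \subset \W^{\pr}$, so it suffices to show that $E$ itself is Zariski-dense. Using $\Z_p\lb \O_{F,p}^\times \rb \cong \Z_p[\mu] \otimes \Z_p\lb T_1, T_2\rb$ with $\mu$ the torsion subgroup, each component of $W$ is an open bidisc in $(T_1, T_2)$, and the elements of $E$ on a component correspond to pairs $(\zeta_1 - 1, \zeta_2 - 1)$ with $\zeta_i \in \mu_{p^\infty}$. Since these accumulate at the origin of the disc, a one-variable-at-a-time Weierstrass preparation argument — fix $T_2 = \zeta_2 - 1$, note that the resulting power series in $T_1$ has infinitely many accumulating zeros, hence vanishes identically, then iterate in the other variable — shows that any nonzero $f \in \Z_p\lb T_1, T_2 \rb$ fails to vanish on at least one such pair. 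Hence $E$ is Zariski-dense, so the Zariski closure of $\W^{\pr}$ is all of $W$.

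For the rigid direction, the closure is $\bigcup_{\varepsilon \in E} V_\varepsilon$, a countable union of $1$-dimensional closed analytic subvarieties, understood as a $1$-dimensional ``subspace'' in the sense that every irreducible component has dimension $1$. The main obstacle is making precise what ``rigid analytic closure'' of a countable union of subvarieties should mean: the smallest closed analytic subset containing $\W^{\pr}$ would be all of $W$, by the same density argument as above. Following the convention in \cite{loeffler}, the correct notion is the union of all irreducible analytic subvarieties $V \subset \W$ in which $\W^{\pr}$ is rigid-dense, i.e., those for which no nonzero analytic function on $V$ vanishes on $\W^{\pr} \cap V$. Each $V_\varepsilon$ qualifies, since $\W^{\pr} \cap V_\varepsilon \supset \W_0 \cdot \varepsilon$ is rigid-dense in $V_\varepsilon$ by \Cref{gl2-closed-lemma}. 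Conversely, no $2$-dimensional irreducible analytic subvariety of $\W$ qualifies, because a single nonzero analytic function on a $2$-dimensional affinoid piece has zero locus equal to a \emph{finite} union of $1$-dimensional irreducible components, so it cannot vanish on the infinitely many distinct curves $V_\varepsilon$ which pass through that affinoid; therefore such a $2$-dimensional $V$ admits nonzero analytic functions vanishing on $\W^{\pr} \cap V$, ruling $V$ out of the rigid closure.
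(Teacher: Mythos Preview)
Your Zariski argument is fine and a bit more explicit than the paper's, which simply notes that the closure contains infinitely many disjoint $1$-dimensional translates $\overline{\W_\varepsilon}$ and therefore cannot be $1$-dimensional.

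The rigid argument, however, has a genuine gap. Your claim that ``the smallest closed analytic subset containing $\W^{\pr}$ would be all of $W$, by the same density argument as above'' is false, and this is exactly the point where the rigid and Zariski pictures diverge. The Zariski density argument uses functions in $\Z_p\lb T_1,T_2\rb$; rigid analytic functions on the open bidisc are a much larger class, and in particular include the $p$-adic logarithm. The paper's proof exploits this directly: take $u=(1+p,(1+p)^{-1})\in\O_{F,p}^\times$, a generator of the norm-one part modulo torsion, and consider the single global rigid analytic function $\chi\mapsto \log_p(\chi(u))$. Every locally parallel weight $\chi=N(\cdot)^n\varepsilon$ satisfies $\chi(u)=\varepsilon(u)\in\mu_\infty$, so $\log_p(\chi(u))=0$. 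Thus $\W^{\pr}$ sits inside the zero locus of a single nonzero analytic function, which is a genuine $1$-dimensional closed rigid analytic subspace. No redefinition of ``rigid closure'' is needed.

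Your workaround also has a flaw on its own terms: the assertion that ``infinitely many distinct curves $V_\varepsilon$ pass through that affinoid'' is not true. In coordinates, $V_\varepsilon$ is cut out by $(1+T_1)/(1+T_2)=\zeta$ for some $p$-power root of unity $\zeta$, and as the order of $\zeta$ grows, $|\zeta-1|_p\to 1$, so only finitely many $V_\varepsilon$ meet any fixed affinoid $|T_i|\le r<1$. So the finiteness-of-components argument does not rule out $2$-dimensional $V$ the way you want; indeed, the zero locus of $\log_p(\chi(u))$ is exactly a $1$-dimensional closed analytic set containing all the $V_\varepsilon$ at once.
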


\begin{proof}
\textit{(See also \cite[Prop.~4.2]{loeffler})}
For any finite character $\varepsilon:\O_{F,p}^\times \to \ol\Q_p^\times$ (of which there are infinitely many), define $\W_\varepsilon$ as the subspace of $\W^{\pr}$ consiting of weights of the form
\[ x\mapsto (x\ol x)^n\varepsilon(x). \]
Then $\W^{\pr}$ is the disjoint union of the $\W_\varepsilon$ for all $\varepsilon$, where $\W_0$ corresponds to the trivial character.
$\W_\varepsilon$ is a disjoint translate of $\W_0$, thus has closure isomorphic to (but disjoint from) the closure of $\W_0$.
Thus the Zariski closure of $\W^{\pr}$ contains an infinite collection of disjoint 1-dimensional subspaces, and therefore cannot be 1-dimensional.

Consider the element
\[ u = (1+p,(1+p)^{-1}) \in \Z_p^\times\times\Z_p^\times \cong \O_{F,p}^\times. \]
Then $N(u) = 1$, and $u$ is a $\Z_p$-generator for the norm 1 elements of $\O_{F,p}^\times/$torsion.
Then the map $\chi \mapsto \log_p(\chi(u))$ is a rigid analytic function on $\W$, which cuts out a 1-dimensional closed subscheme.
But any weight $\chi \in \W^{\pr}$ is in this subsceme, since any such weight can be written as $\chi(x) = N(x)^n\varepsilon(x)$, and satisfies
\[ \log_p(\chi(u)) = n\log_p(N(u)) + \log(\varepsilon(u)) = n\log_p(1) + 0 = 0, \]
since $\varepsilon$ takes values in the roots of unity, which are 0's of the $p$-adic logarithm.
\end{proof}

\begin{rmk}
    The claim in \cite{cm} is that $\gamma^{-1}(\W^{\pr})$ is 0-dimensional \textit{in the Zariski topology} (i.e.~\textit{finite}).
    Their strategy only allows one to deduce that $\gamma^{-1}(\W^{\pr})$ 0-dimensional in the \textit{rigid topology} (which does not imply finite).
    But this is still good enough to imply that $\gamma^{-1}(\W^{\pr})$ is at least sparse and discrete.
\end{rmk}

\subsection{Enforcing finiteness}
If one desires finiteness, one possible solution is to put some sort of restriction on the allowed finite order characters.
The most natural way to do this is to constrain the ramification, which could be done in a number of ways.
For example, one suggestion of \cite{loeffler} is to only consider specializations of $\rho$ which are crystalline at both $v,\ol v$, or become so after any fixed finite extension of $\Q_p$.
Another solution would be to only consider $K$-points, where $K/\Q_p$ is an extension which doesn't contain all $p$-power roots of unity, so as to limit the possible ramification of characters.

\subsection{Passage to infinitesimal weights}
Next, I will explain the use of ``infinitesimal weights'' in \cite{cm} to show that $\gamma^{-1}(\W^{\pr})$ is 0-dimensional .

\begin{defn}
For a cocycle $\varphi \in H^1(\Gamma_{F_v},\b^0)$, the \textbf{infinitesimal weight} of $\varphi$ is given by applying to $\varphi$ the composition of maps
\[
\begin{tikzcd}
H^1(\Gamma_{\Q_p},\b^0) \ar[r] \ar[dr,"\beta",dashed] & H^1(\Gamma_{\Q_p},\b^0/\n) \ar[r,"\cong"] & \Hom(\Gamma_{\Q_p}^{\ab},\t^0) \ar[d,"\text{res + CFT}"] \\
& k & \Hom(\Z_p^\times,k) \ar[l,"ev_{1+p}"']
\end{tikzcd}
\]
where the first map is induced by $\b^0 \to \b^0/\n$, the second follows from the fact that $\b^0/\n \cong \t^0$ is abelian with trivial action, the third restricts to inertia and uses class field theory, and the fourth evaluates at $1+p$.
\end{defn}

\begin{prop}\label{gl2-prop}
    One can associate infinitesimal weights to $\rho$ at $v$ and $\ol v$, such that either 
    \begin{enumerate}
        \item the infinitesimal weights of $\rho$ are parallel (equal at $v$ and $\ol v$), or
        \item $\gamma^{-1}(\W^{\pr})$ is 0-dimensional (in the rigid topology), and $\gamma^{-1}(\W_\varepsilon)$ is finite for each $\varepsilon$.
    \end{enumerate} 
\end{prop}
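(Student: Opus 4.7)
My approach is to extract infinitesimal weights from $\rho$ and then perform a first-order analysis of $\gamma$ at $X = 0$. After conjugating by a suitable element of $\wh G(\O\lb X\rb)$, the local restrictions $\rho_v$ and $\rho_{\ol v}$ take values in $B(\O\lb X\rb)$. Reducing modulo $(\varpi, X^2)$ gives lifts of $\ol\rho_v$ and $\ol\rho_{\ol v}$ to $B(k[\epsilon])$; under the identification $\Lift_{\ol\rho_v}(k[\epsilon]) \cong Z^1(\Gamma_{F_v}, \g)$, these correspond to cocycles $\varphi_v, \varphi_{\ol v} \in Z^1(\cdot, \b^0)$ (the $\b$-valued range reflecting that the lifts stay in $B$; restriction to $\b^0$ comes from the fixed-determinant condition). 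The infinitesimal weights are then $w_v := \beta(\varphi_v)$ and $w_{\ol v} := \beta(\varphi_{\ol v})$, elements of $k$. If they coincide, conclusion (1) holds.

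Assume $w_v \neq w_{\ol v}$ from here on, and introduce the rigid analytic function $f(X) := \log_p \gamma(X)(u)$ on $\D^1$, where $u = (1+p, (1+p)^{-1})$. By the proof of \Cref{gl2-closed}, the rigid closure $\ol{\W^{\pr}}$ is cut out by the condition $\log_p \chi(u) = 0$, so $\gamma^{-1}(\W^{\pr}) \subset \gamma^{-1}(\ol{\W^{\pr}}) = \{f = 0\}$. Writing $\gamma(X)(u) = \chi_v(\rec(1+p)) \cdot \chi_{\ol v}(\rec(1+p))^{-1}$ and using that Teichm\"uller representatives of $k^\times$-valued characters lie in the kernel of $\log_p$, a direct computation with the first-order $X$-expansions of $\chi_v$ and $\chi_{\ol v}$ yields
\[
f'(0) \equiv w_v - w_{\ol v} \pmod{\varpi},
\]
which is a unit in $\O$. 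In particular $f \not\equiv 0$, and $\{f = 0\}$ is a proper closed rigid analytic subvariety of the one-dimensional space $\D^1$, hence 0-dimensional. This establishes the first statement of case (2).

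For the finiteness of $\gamma^{-1}(\W_\varepsilon)$, observe that $N(u) = 1$, so any locally parallel weight $\chi = N^n \varepsilon$ satisfies $\chi(u) = \varepsilon(u)$; consequently
\[
\gamma^{-1}(\W_\varepsilon) \,\subset\, \{X \in \D^1 : \gamma(X)(u) - \varepsilon(u) = 0\}.
\]
The function $\gamma(X)(u) - \varepsilon(u)$ lies in $\O\lb X\rb$. If its constant term $\omega_0 - \varepsilon(u)$ (with $\omega_0 = \gamma(0)(u)$) is nonzero, the function is a nonzero element of $\O\lb X\rb$; if it vanishes, the same first-order calculation as above shows that the $X$-linear coefficient is a unit in $\O$, so again the function is nonzero. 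Weierstrass preparation in $\O\lb X\rb$ then guarantees only finitely many zeros in $\D^1$, whence $\gamma^{-1}(\W_\varepsilon)$ is finite.

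The principal technical hurdle is the identity $f'(0) \equiv w_v - w_{\ol v} \pmod{\varpi}$: making this precise requires tracking the matrix entries of $\rho_v$ through $B \twoheadrightarrow T \twoheadrightarrow T/Z(G) = \G_m$ and through local class field theory, and matching the result with the intrinsic definition of $\beta$ (including the normalization by $1+p$). A secondary subtlety, flagged in the discussion preceding the proposition, is whether the ratio characters at $v$ and $\ol v$ have the same or opposite residual Hodge--Tate weights; in the latter case one replaces $\gamma$ by $\gamma'$ and $u$ by $(1+p, 1+p)$, producing an analogous dichotomy with $w_v + w_{\ol v}$ in place of $w_v - w_{\ol v}$.
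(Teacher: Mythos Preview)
Your argument is correct and essentially parallel to the paper's, but you run the implication in the opposite direction. The paper argues: if (2) fails then $g(X):=\chi_v(1+p)/\chi_{\ol v}(1+p)$ must be a constant root of unity (using an auxiliary lemma that a power series in $\O\lb X\rb$ taking only root-of-unity values is constant), and then reads off (1) from the first-order expansion of $g$. You instead start from the failure of (1), compute directly that the $X$-linear coefficient of $g$ reduces to $w_v-w_{\ol v}\ne 0$ modulo $\varpi$, hence is a unit, and conclude (2) via Weierstrass preparation and the rigid function $\log_p g$. Both routes pivot on the identical computation $g'(0)/g(0)\equiv \beta(\varphi_v)-\beta(\varphi_{\ol v})\pmod\varpi$; your direction is marginally cleaner because it bypasses the root-of-unity lemma entirely. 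One small point: $\varepsilon(u)$ need not lie in $\O$, so your Weierstrass argument for $\gamma(X)(u)-\varepsilon(u)$ should take place over a finite extension $\O'=\O[\varepsilon(u)]$; this is harmless since the linear coefficient $g'(0)$ remains a unit there.
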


\begin{proof}
    Let $\chi_v,\chi_{\ol v}:\Z_p^\times \to \O\lb X\rb^\times$ be defined as above, by taking the quotient of the diagonal characters of $\rho_v$ and $\rho_{\ol v}$ respectively.
    Let $f_v(X) = \chi_v(1+p)$ and $f_{\ol v}(X) = \chi_{\ol v}(1+p)$.
    The weight of $\chi_v$ at a specialization $\alpha \in \ol \Q_p$ with $|\alpha|_p<1$ of $X$ can be extracted as
    \[ \frac{\log_p(f_v(\alpha))}{\log_p(1+p)}, \]
    and similarly for $\ol v$.
    If the weights of $\chi_v$ and $\chi_{\ol v}$ are equal at $\alpha$, then we can conclude that
    \[ \log_p\left(\frac{f_v(\alpha)}{f_{\ol v}(\alpha)}\right) = 0, \]
    Since $f_v(\alpha),f_{\ol v}(\alpha)\in \O^\times$, $f_v(\alpha)/f_{\ol v}(\alpha)$ must be a root of unity.
    in other words, if 
    \[ g(X) := f_v(X)/f_{\ol v}(X) \in \O\lb X\rb^\times, \] 
    then $g(\alpha)$ is a root of unity.
    If $\chi_v$ and $\chi_{\ol v}$ have equal weights (meaning that the weights are equal at all specializations $\alpha)$, then $g$ only takes values in the roots of unity.
    I claim that this implies that $g$ is constant.
    
\begin{lem}
    Let $\O$ be a complete discrete valuation ring with uniformizer $\varpi$.
    If $g \in \O\lb X\rb$ and $g(\alpha)$ is a root of unity for any $\alpha \in \ol {\O[\varpi^{-1}]}$ with $|\alpha|<1$, then $g$ is constant.
\end{lem}
\begin{proof}[Proof of Lemma]
    Fix any finite extension $M/\O[\varpi^{-1}]$.
    Then $M$ contains a finite number of roots of unity.
    For $\zeta \in \mu(M)$, $g(X) - \zeta$ has finitely many zeros in the maximal ideal of $\O_M$ (by compactness and the isolation of zeros of power series), unless $g(X) = \zeta$ on all specializations in $M$.
    Since every element of the maximal ideal of $\O_M$ must be such a zero and $(\varpi)$ is an infinite set, we must have $g(X) = \zeta$ on $M$-points for some $\zeta \in \mu(M)$.
    Limiting over all finite extensions $M$, we see that $g$ must be constant.
\end{proof}

Continuing the proof of the theorem, we see that if $\chi_v$ and $\chi_{\ol v}$ have equal weights, then $f_v(X) = \zeta f_{\ol v}(X)$ for some root of unity $\zeta$.
Otherwise, $g(X) = \zeta$ can only have finitely many zeros for \textit{every} root of unity $\zeta$, which implies that $\gamma^{-1}(\W_\varepsilon)$ is finite for each $\varepsilon$, i.e.~(2) holds.

Suppose that (2) does not hold.
Then we may assume that $f_v(X) = \zeta f_{\ol v}(X)$ for some root of unity $\zeta$.
Let $\varpi \in \O$ denote a uniformizer, and write
\[ f_v(x) \equiv \sum_{n=0}^\infty a_n^vX^n \pmod \varpi, \]
and similarly for $\ol v$.
Note that $\O\lb X\rb/(\varpi,X^2) \cong k[\epsilon]$ by $X \mapsto \epsilon$.
We have
\[ f_v(X) \equiv a_0^v+a_1^v\epsilon \mod{(\varpi, X^2)} \]
for some $a_0^v,a_1^v \in k$, and similarly for $f_{\ol v}(X)$.

Notice that the composition
\[ \rho_\epsilon: \Gamma_F \xrightarrow{\ \rho\ } \GL_2(\O\lb X\rb) \to \GL_2(k[\epsilon]) \]
defines a class in $\Def_{\ol\rho}^{\P}(k[\epsilon]) \cong H^1_{\P}(\Gamma_{F,\Sigma},\g^0)$.
Restricting to $\Gamma_{F_v}$ or $\Gamma_{F_{\ol v}}$ gives cocycles
\[ \varphi_v \in H^1(\Gamma_{F_v},\b^0),\qquad \varphi_{\ol v} \in H^1(\Gamma_{F_{\ol v}},\b^0). \]
Tracing through the definitions of the isomorphism $\Def_{\ol\rho}^{\P}(k[\epsilon]) \cong H^1_{\P}(\Gamma_{F,\Sigma},\g^0)$ and the infinitesimal weight map $\beta$ shows that $\beta(\varphi_v) = a_1^v/a_0^v$ and $\beta(\varphi_{\ol v})=a_1^{\ol v}/a_0^{\ol v}$.
Notice that since $f_v(X)$ is a constant multiple of $f_{\ol v}(X)$, $a_1^v/a_0^v = a_1^{\ol v}/a_0^{\ol v}$, thus (1) holds.
\end{proof}

\begin{rmk}
In order to consider all specializations of $\rho$ with parallel weight, we should re-prove \Cref{gl2-prop} using $\chi_v$, $\chi_{\ol v}^{-1}$, and $\gamma'$ as well (the proof is the same).
This is the case where $\rho_v$ and $\rho_{\ol v}$ have the same weights, but in opposite diagonal matrix entries.
Then the infinitesimal weights will be negatives of each other.
Combining both results into one:
\end{rmk}

\begin{cor}
    Associating inifinitesimal weights to $\rho$ as in \Cref{gl2-prop}, either
    \begin{enumerate}
        \item the infinitesimal weights of $\rho$ are either equal or negatives of each other, or
        \item $\gamma^{-1}(W^{\pr})\cup \gamma'^{-1}(W^{\pr})$ is 0-dimensional (in the rigid topology), and $\gamma^{-1}(W_\varepsilon)\cup \gamma'^{-1}(W_\varepsilon)$ is finite for each $\varepsilon$.
    \end{enumerate}
\end{cor}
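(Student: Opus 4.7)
The approach is to deduce the corollary directly from two applications of Proposition~\ref{gl2-prop}: one for $\gamma$, which is exactly what the proposition as stated handles, and a parallel one for $\gamma'$. Since $\gamma'$ is built from the pair $(\chi_v, \chi_{\ol v}^{-1})$ instead of $(\chi_v, \chi_{\ol v})$, I expect the entire proof of Proposition~\ref{gl2-prop} to go through verbatim for $\gamma'$ once every occurrence of $f_{\ol v}(X)$ is replaced by $f_{\ol v}(X)^{-1} \in \O\lb X\rb^\times$. In particular, the auxiliary function becomes $g(X) = f_v(X)f_{\ol v}(X)$; the same power-series lemma forces $g$ to be constant in the ``all weights equal'' case, while otherwise $g(X) = \zeta$ has finitely many solutions for each root of unity $\zeta$, yielding the 0-dimensionality of $\gamma'^{-1}(\W^{\pr})$ and finiteness of each $\gamma'^{-1}(\W_\varepsilon)$.

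The one piece of substantive bookkeeping is to track what inversion does to the infinitesimal weight at $\ol v$. Working modulo $(\varpi, X^2)$ in $\O\lb X\rb$ and writing $f_{\ol v}(X) \equiv a_0^{\ol v} + a_1^{\ol v}\epsilon$, a direct computation gives $f_{\ol v}(X)^{-1} \equiv (a_0^{\ol v})^{-1} - a_1^{\ol v}(a_0^{\ol v})^{-2}\epsilon$, so the ratio $a_1/a_0$ flips sign under inversion. Retracing the identification $\beta(\varphi_{\ol v}) = a_1^{\ol v}/a_0^{\ol v}$ from the end of the proof of Proposition~\ref{gl2-prop}, the ``equal weights'' conclusion in the $\gamma'$-version becomes $\beta(\varphi_v) = -\beta(\varphi_{\ol v})$ rather than $\beta(\varphi_v) = \beta(\varphi_{\ol v})$.

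With both dichotomies in hand, I would finish by contrapositive: if neither equality nor negation holds between $\beta(\varphi_v)$ and $\beta(\varphi_{\ol v})$, then case (1) of Proposition~\ref{gl2-prop} fails for each of $\gamma$ and $\gamma'$, so case (2) applies to each; taking unions of 0-dimensional (respectively finite) sets preserves 0-dimensionality (respectively finiteness), and one obtains case (2) of the corollary.

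I do not expect any real obstacle beyond the sign-tracking above. The only potential pitfall is confirming that the Weierstrass-preparation / zero-counting argument used in the proof of Proposition~\ref{gl2-prop} is indifferent to replacing a quotient by a product of units in $\O\lb X\rb^\times$, which is immediate since $\O\lb X\rb^\times$ is a group and the argument only uses that $g$ lies in this group.
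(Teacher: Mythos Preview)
Your proposal is correct and follows essentially the same approach as the paper: the paper's argument (contained in the remark immediately preceding the corollary) is simply to re-run \Cref{gl2-prop} with $\chi_{\ol v}^{-1}$ in place of $\chi_{\ol v}$, observe that the infinitesimal weights then come out as negatives of each other, and combine the two dichotomies. Your sign-tracking computation for $f_{\ol v}^{-1} \pmod{(\varpi,X^2)}$ and the contrapositive packaging just make explicit what the paper leaves to the reader.
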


\begin{rmk}\label{gl2-concern}
    To arrive at the conclusion of Theorem 7.1 in \cite{cm}, we wish end up in case (2) \textit{of the corollary} rather than the proposition, and this leads to another subtlety that doesn't seem to be addressed in \cite{cm}.
    To conclude the proof of Theorem 7.1 of \cite{cm}, the idea is that if you are in case (1) of \Cref{gl2-prop}, then add an additional prime of ramification to ensure you are no longer in case (1).
    The argument allows one to avoid any subspace in the infinitesimal weight space $k\oplus k$, but in light of the corollary, one really wishes to avoid both the diagonal subspace and the anti-diagonal subspace, which together span all of infinitesimal weight space.
    Unfortunately, the argument in \cite{cm} doesn't rule out the possibility that every attempt to move out of the diagonal subspace places you into the anti-diagonal subspace, or vice versa.
\end{rmk}

\subsection{Geometric lifts} We really don't care about specializations of $\rho$ with parallel weight as much as we care about specializations which are automorphic, which must also be \textit{geometric}, in the sense of the Fontaine-Mazure conjecture.
We can use this extra structure to address the concerns raised in \Cref{gl2-concern}, by making one additional minor assumption on $\ol\rho$, namely that the $\ol\rho_v$ and $\ol\rho_{\ol v}$ are non-split.
For then $\rho_v$ and $\rho_{\ol v}$ will also be non-split, and all but finitely many specializations of $\rho_v$ and $\rho_{\ol v}$ will be non-split.
If a non-split specialization of $\rho_v$ is de Rham (e.g.~if $\rho$ is geometric), then it induces a non-zero class in the group $H^1_g(\Gamma_{F_v},\Q_p(\chi_v))$ defined by Block and Kato in \cite{bk}.
The dimensions of $H^1_g$ can be calculated using basic $p$-adic Hodge theory, for example in  \S1 of \cite{nek}.
Proposition 1.24 of \cite{nek} gives the formula for the dimension of $H^1_g$.
From this we conclude

\begin{prop}\label{p-bloch-kato}
    If $H^1_g(\Gamma_{F_v},\Q_p(\chi_v)) \neq 0$, then the Hodge-Tate weight of $\chi_v$ is a non-negative integer.
\end{prop}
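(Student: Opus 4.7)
The plan is to combine two standard facts about Bloch--Kato Selmer groups: that $H^1_g$ vanishes whenever its coefficient representation is not de Rham, and that its dimension can be computed explicitly in $p$-adic Hodge-theoretic terms when the coefficient is de Rham.

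First, I would reduce to the case that $\chi_v$ is a de Rham character. By definition, a nonzero class in $H^1_g(\Gamma_{F_v},\Q_p(\chi_v))$ is an extension $0 \to \Q_p(\chi_v) \to E \to \Q_p \to 0$ that becomes split after tensoring with $B_{dR}$, i.e., an $E$ that is de Rham as a $2$-dimensional representation. Since de Rham representations are closed under subobjects ($D_{dR}$ is left-exact and the outer two quotients have maximal $D_{dR}$-dimension), $\Q_p(\chi_v)$ must itself be de Rham, and so $\chi_v$ has an integer Hodge--Tate weight $n$.

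Next, I would apply the dimension formula of \cite[Prop.~1.24]{nek} to $V = \Q_p(\chi_v)$: for $V$ de Rham it expresses $\dim_{\Q_p} H^1_g(\Gamma_{F_v},V)$ as the sum of $\dim_{\Q_p} H^0(\Gamma_{F_v},V)$, $[F_v:\Q_p]\dim_{F_v}(D_{dR}(V)/\mathrm{Fil}^0)$, and $\dim_{\Q_p} H^0(\Gamma_{F_v},V^*(1))$. With the paper's normalization ($\kappa$ has HT weight $+1$), $D_{dR}(V)/\mathrm{Fil}^0$ vanishes exactly when $n \leq 0$; $H^0(\Gamma_{F_v},V)$ is nonzero only if $\chi_v = 1$ (forcing $n = 0$); and $H^0(\Gamma_{F_v},V^*(1))$ is nonzero only if $\chi_v = \kappa|_{\Gamma_{F_v}}$ (forcing $n = 1$).

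If $n < 0$, then all three contributions to the formula vanish, giving $H^1_g = 0$ and contradicting the hypothesis; hence $n \geq 0$. The argument is essentially bookkeeping, and the only points requiring care are the Hodge--Tate weight sign convention and the exact form of the Bloch--Kato dimension formula; no genuinely hard step appears.
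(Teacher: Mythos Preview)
Your approach matches the paper's: both simply invoke the Bloch--Kato dimension formula of \cite[Prop.~1.24]{nek}, and the paper supplies no further detail than that citation, so your write-up is in fact more explicit than the original.

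One minor caveat: in your first step you write that a class in $H^1_g$ corresponds to an extension $E$ that ``becomes split after tensoring with $B_{dR}$, i.e., an $E$ that is de Rham,'' and then conclude that $\Q_p(\chi_v)$ is de Rham as a subobject. The implication ``split over $B_{dR}$ $\Rightarrow$ $E$ de Rham'' already presupposes that the subobject $\Q_p(\chi_v)$ is de Rham (in general splitting over $B_{dR}$ only forces $\dim D_{dR}(E)=\dim D_{dR}(\Q_p(\chi_v))+1$), so this step is circular as a \emph{reduction} to the de Rham case. In the paper's context, however, $\chi_v$ is a subquotient of the de Rham representation $\rho_v$ and hence de Rham from the outset, so no reduction is needed and your application of the dimension formula goes through unchanged.
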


Since $\rho_v$ or $\rho_{\ol v}$ will only be split at finitely many specializations, we can ignore these and consider the specializations of $\rho$ where $\rho_v$ and $\rho_{\ol v}$ are both non-split.
The proposition implies that in this case the weights of $\chi_v$ and $\chi_{\ol v}$ (rather than just $\chi_{\ol v}^{\pm 1}$) must agree on any geometric specialization of $\rho$.
In particular, we no longer need to worry about the anti-diagonal infinitesimal weight space, and \Cref{gl2-prop} together with the arguments of \cite{cm} suffice to conclude:

\begin{thm}
    (Compare with \cite[Thm.~7.1]{cm})
    Let $k/\F_p$ be a finite field, $F/\Q$ a quadratic imaginary extension in which $p$ splits.
    Let $\ol\rho:\Gamma_F \to \GL_2(k)$ denote a continuous absolutely irreducible Galois representation satisfying:
    \begin{enumerate}
        \item The image of $\ol\rho$ contains $\SL_2(k)$, and $p\geq 5$,
        \item If $v\mid p$, then $\ol\rho$ is nearly ordinary at $v$ taking the shape
        \[ \rho|_{I_v} = \bpm \psi & * \\ 0 & 1\epm \]
        with $\psi \neq 1,\kappa^{\pm 1}$ and $*\neq 0$.
        \item If $v\nmid p$ and $\ol\rho$ is ramified at $v$, then $H^2(\Gamma_{F_v},\g^0) = 0$.
    \end{enumerate}
    Then there exists a Galois representation $\rho:\Gamma_F \to \GL_2(W(k)\lb X\rb)$ lifting $\ol\rho$ such that
    \begin{enumerate}
        \item $\ol\rho$ is unramified outside some finite set of primes $\Sigma$, and if $v\mid p$ then $\rho|_{\Gamma_{F_v}}$ is nearly ordinary, and
        \item the set of specializations of $\rho$ which are geometric with parallel Hodge-Tate weights is 0-dimensional in the rigid topology.
    \end{enumerate}
\end{thm}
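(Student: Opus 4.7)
The plan is to combine the Ramakrishna-style construction of \Cref{large-image} with the infinitesimal-weight analysis of \Cref{gl2-prop} and the Bloch--Kato constraint of \Cref{p-bloch-kato}, using the latter to sidestep the difficulty articulated in \Cref{gl2-concern}.

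First, I would verify that the hypotheses (1)--(3) of the theorem imply those of \Cref{large-image}: the containment $\SL_2(k)\subset\image\ol\rho$ supplies the image condition; the assumptions $\psi\neq 1$ and $\psi\neq\kappa^{\pm 1}$ yield (REG) and (REG*) for the nearly ordinary local condition at each $v\mid p$; and the vanishing $H^2(\Gamma_{F_v},\g^0)=0$ at ramified primes away from $p$ provides liftable local deformation problems with tangent spaces of the required dimension, by local duality. Since $\tfrac{[F:\Q]}{2}\dim\t^0=1$, \Cref{large-image} produces a lift $\rho:\Gamma_F\to\GL_2(W(k)\lb X\rb)$ that is nearly ordinary at both primes above $p$ and unramified outside a finite set $\Sigma$, yielding conclusion (1) of the theorem.

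Next, after $\wh G$-conjugating $\rho|_{\Gamma_{F_v}}$ into $B(W(k)\lb X\rb)$ at each $v\mid p$, extract the ratio of diagonal characters $\chi_v$ (and similarly $\chi_{\ol v}$), and assemble them into rigid-analytic maps $\gamma,\gamma':\D^1\to\W$. By \Cref{gl2-prop} and its variant using $\chi_{\ol v}^{-1}$, either $\gamma^{-1}(\W^{\pr})\cup\gamma'^{-1}(\W^{\pr})$ is already 0-dimensional in the rigid topology, or the infinitesimal weights of $\rho$ at $v$ and $\ol v$ are equal (``diagonal'') or negatives of each other (``anti-diagonal''). The non-splitness assumption in hypothesis (2) is what breaks the symmetry: non-splitness persists to $\rho_v$ and $\rho_{\ol v}$ on all but finitely many specializations, and on any such non-split geometric specialization \Cref{p-bloch-kato} forces the Hodge--Tate weights of $\chi_v$ and $\chi_{\ol v}$ to be non-negative integers. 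Hence they cannot be nonzero opposites, so the anti-diagonal case does not contribute any geometric parallel-weight specialization, and only the diagonal case of \Cref{gl2-prop} needs to be ruled out.

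The hard step, and the heart of the argument, is arranging that the infinitesimal weights of $\rho$ at $v$ and $\ol v$ are not parallel. If they are, the mod-$(\varpi,X^2)$ reduction of $\rho$ gives a nonzero Selmer class $\psi\in H^1_{\P}(\Gamma_{F,\Sigma},\g^0)$ whose infinitesimal weights at $v$ and $\ol v$ agree. Using the remark following \Cref{ax-ram}, I would find a Ramakrishna prime $w$ whose local condition excludes $\psi$ from the tangent space; by the numerology of \Cref{ax-ram} the deformation ring retains its $\O\lb X\rb$ shape, while the space of cocycles with parallel infinitesimal weight shrinks. The main obstacle, which is precisely the gap in \cite{cm} identified in \Cref{gl2-concern}, is that killing a diagonal-weight cocycle could in principle leave the new universal deformation in the anti-diagonal weight subspace; this is exactly what the Bloch--Kato step above renders harmless, since the anti-diagonal locus contributes no geometric parallel-weight points. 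Iterating finitely many times drops us into case (2) of \Cref{gl2-prop}, and conclusion (2) of the theorem follows.
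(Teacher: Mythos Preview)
Your overall strategy matches the paper's: apply \Cref{large-image} to build $\rho$, use \Cref{gl2-prop} to reduce to the infinitesimal-weight dichotomy, invoke \Cref{p-bloch-kato} together with non-splitness to discard the anti-diagonal branch, and then add auxiliary primes to escape the diagonal case. This is exactly what the paper sketches before stating the theorem (``\Cref{gl2-prop} together with the arguments of \cite{cm} suffice to conclude'').

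There is one genuine gap in your ``hard step.'' You appeal to the remark after \Cref{ax-ram} to choose a Ramakrishna prime $w$ that ejects the parallel-weight class $\psi$ from the new tangent space, and then assert that ``the space of cocycles with parallel infinitesimal weight shrinks,'' so that finitely many iterations suffice. But the remark only guarantees $\psi\notin H^1_{\P\cup L_w}$ and that dual Selmer still vanishes; it says nothing about the infinitesimal weight of the \emph{new} generator $\tilde\psi$ of the (still $1$-dimensional) Selmer group. There is no a priori reason your iteration terminates: each step replaces one parallel-weight class by another class which could again be parallel.

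What is actually needed---and what the paper supplies in \Cref{sparse} (the ``arguments of \cite{cm}'' made precise)---is a two-part argument: first, the surjectivity of the restriction map $H^1(\Gamma_{F,\Sigma\cup T},\g^0)\to k\oplus k$ onto the full infinitesimal weight space, so that some cocycle (not a priori Selmer) has non-parallel weight; second, a careful choice of a \emph{single} prime $y$ so that the resulting $H^1_{\P'}$ is forced, by a dimension count in the diagram comparing $H^1_{\P}$, $H^1_{\P'}$, and $\ker\Phi$, to contain a class $\tilde\psi$ whose weight is not in the diagonal. Your proposal should either reproduce this argument or point explicitly to \Cref{sparse}; as written, the termination of your iteration is unjustified.
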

\section{Hodge symmetry and parallel weight}\label{symmetry}

\subsection{Introduction} 
This section gives automorphic and motivic analogues of ``parallel Hodge-Tate weights'' in the $\GL_2$ case and extends these ideas to more general groups $G$.
Then we can study weights, weight space, parallel weight space, etc.~in this generality.

\subsection{Vanishing of cuspidal cohomology}
Irreducible Galois representations $\Gamma_F \to \GL_2(\ol\Q_p)$ for $F/\Q$ quadratic imaginary are conjecturally connected to cuspidal automorphic forms on $\GL_2(\A_F)$.
If such an automorphic form is ``cohomological,'' then it appears in a cohomology group of the form
\[ H^1_P(\Gamma,S_{a,b}) \]
where $\Gamma$ is a congruence subgroup of $\SL_2(\O_F)$ and $S_{a,b}$ is defined as
\[ S_{a,b} = \Sym^a(\O_F^2) \otimes \ol{\Sym^b(\O_F^2)}, \]
where the action on the second factor is twisted by complex conjugation.
The $P$ refers to ``parabolic'' (or ``cuspidal'') cohomology.
The motivation for ``parallel weight'' is the following result.

\begin{thm}
    $H_P^1(\Gamma,S_{a,b}) = 0$ unless $a = b$.
\end{thm}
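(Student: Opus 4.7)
The plan is to pass from group cohomology of $\Gamma$ to $(\g_\infty,K_\infty)$-cohomology of automorphic representations, then use a matching-of-infinitesimal-characters (Wigner's lemma) argument together with unitarity at the archimedean place to force $a=b$.

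First I would express the parabolic cohomology via automorphic forms. By Borel's comparison and the theorem of Borel--Wallach \cite[Ch.~VII]{bw}, for a neat congruence subgroup $\Gamma$ corresponding to a compact open $K_f\subset \GL_2(\A_F^\infty)$, one has
\[ H^1_P(\Gamma,S_{a,b})\otimes_\Z \C \;\cong\; \bigoplus_\pi m(\pi)\,H^1(\g_\infty,K_\infty;\pi_\infty\otimes S_{a,b,\C})\otimes (\pi_f)^{K_f}, \]
where $\pi=\pi_\infty\otimes\pi_f$ runs over cuspidal automorphic representations of $\GL_2(\A_F)$. So it suffices to show that the $(\g_\infty,K_\infty)$-cohomology on the right vanishes for every cuspidal $\pi_\infty$ unless $a=b$.

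Next I would apply Wigner's lemma: for any $(\g_\infty,K_\infty)$-module $\pi_\infty$ and any finite-dimensional representation $V$ of $\g_\infty$, the relative Lie algebra cohomology $H^*(\g_\infty,K_\infty;\pi_\infty\otimes V)$ vanishes whenever $\pi_\infty$ and $V^*$ have distinct infinitesimal characters. So I must compare infinitesimal characters. Since $F$ is imaginary quadratic, $F\otimes_\Q\R\cong\C$ and $\GL_2(F\otimes_\Q\R)=\GL_2(\C)$; the complexification of the real Lie algebra factors as $\gl_2(\C)\oplus\gl_2(\C)$, with the two factors swapped by the action of complex conjugation on $F$. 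The representation $S_{a,b,\C}$ has highest weight $(a,0)$ on the first factor and $(b,0)$ on the second, so by the Harish-Chandra isomorphism its infinitesimal character is represented (modulo $W=S_2\times S_2$) by the pair $\bigl((a+1,0),\,(b+1,0)\bigr)$ shifted by $\rho$.

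Finally I would impose unitarity. Any cuspidal $\pi_\infty$ is unitary, and the admissible dual of $\GL_2(\C)$ is exhausted by (induced) principal series from characters of the form $z\mapsto z^p\bar z^q$ on the diagonal; unitarity forces the Langlands parameters on the two $\gl_2$-factors to be interchanged by complex conjugation, equivalently the infinitesimal character $(\lambda_1,\lambda_2;\mu_1,\mu_2)$ must satisfy $\{\mu_1,\mu_2\}=\{-\bar\lambda_1,-\bar\lambda_2\}$ as multisets. Matching this with the infinitesimal character of $S_{a,b,\C}^*$ computed above, and remembering that $a,b\in\Z_{\ge 0}$, a direct check on the two possible Weyl orbits shows the only solutions have $a=b$. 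The main obstacle is getting the bookkeeping for the infinitesimal character of $S_{a,b}$ under the $\Z$-structure and the swap-action of complex conjugation exactly right; once that is set up the unitarity constraint gives the conclusion immediately. (One can also argue without Wigner's lemma by directly invoking the classification of cohomological representations of $\GL_2(\C)$ in \cite[Ch.~VI]{bw}, which lists exactly those $\pi_\infty$ contributing to $H^*(\g_\infty,K_\infty;\pi_\infty\otimes S_{a,b,\C})$ and forces parallel weight at the outset.)
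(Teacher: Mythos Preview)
The paper does not give its own proof of this statement; it simply cites \cite{taylor} and \cite{harder}. Immediately afterward, however, the paper explains the general higher-rank vanishing via the Borel--Wallach criterion \cite[II.6.12, VII.6.7]{bw}: decompose the cohomology of $\Gamma$ in terms of unitary representations, and then observe that for a unitary $(\g,\k)$-module $H$ one has $\Ext^*_{\g,\k}(F,H)=0$ unless the highest weight $\Lambda$ of $F$ (shifted by $\rho$) satisfies $\theta\Lambda=\Lambda$. The paper then computes that for a complex group $\theta$ swaps the two factors of $\g_\C\cong\g\oplus\g$ and dualizes, so $\theta\Lambda=\Lambda$ becomes $\lambda_1=-w_0\lambda_2$, which for $\SL_2$ is exactly $a=b$.

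Your route is the same argument unpacked: the decomposition step is identical, Wigner's lemma is the infinitesimal-character refinement of the Casimir matching used in Borel--Wallach, and your unitarity constraint $\{\mu_1,\mu_2\}=\{-\bar\lambda_1,-\bar\lambda_2\}$ is precisely what one gets from $\pi\cong\bar\pi^{\,*}$, which is the representation-theoretic content of $\theta\Lambda=\Lambda$. Once the infinitesimal character is forced to be real (by matching with $S_{a,b}^*$), this reduces to $|\lambda|=|\mu|$ on the $\sl_2$ factors and hence $a+1=b+1$. So your proposal is correct and is essentially the argument the paper is pointing to; the only cosmetic difference is that the paper (following \cite{bw}) packages the unitarity input as the single condition $\theta\Lambda=\Lambda$, whereas you derive that condition by hand from $\pi\cong\bar\pi^{\,*}$. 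One small caution: the Borel--Wallach theorem the paper quotes is stated for cocompact $\Gamma$, so to cover congruence subgroups of $\SL_2(\O_F)$ you do need, as you indicate, the cuspidal decomposition (Borel's theorem) rather than the full $L^2$-decomposition --- your first step handles this correctly.
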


For more on this, see \cite{taylor}, which refers the reader to \cite{harder} for details.

\subsection{Vanishing of cohomology in higher rank}
For more general $G$, the book \cite{bw} proves similar vanishing results, at least in the case of a cocompact arithmetic subgroups $\Gamma$.

\begin{theorem}{\cite[VII 6.7]{bw}}
    For $G$ connected, semi-simple, finite center, and no compact factor, $\Gamma$ a cocompact discrete subgroup, and $F$ and irreducible finite dimensional $G$-module with highest weight $\Lambda - \rho$.
    If $\theta\Lambda \neq \Lambda$, then $H^*(\Gamma,F) = 0$.
\end{theorem}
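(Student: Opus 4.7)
My plan is to reduce the computation of $H^*(\Gamma,F)$ to a question about $(\g,K)$-cohomology of unitary representations via Matsushima's formula, and then apply Wigner's lemma together with the constraints that unitarity places on infinitesimal characters.

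Since $\Gamma \subset G$ is cocompact, the right regular representation of $G$ on $L^2(\Gamma\backslash G)$ decomposes discretely as $\bigoplus_\pi m(\pi,\Gamma)\,H_\pi$ with finite multiplicities, where $\pi$ ranges over irreducible unitary representations of $G$. Fixing a maximal compact subgroup $K \subset G$ corresponding to the Cartan involution $\theta$ and denoting its Lie algebra by $\k$, Matsushima's formula identifies
\[
H^*(\Gamma,F) \;\cong\; H^*(\g,K;\, C^\infty(\Gamma\backslash G)\otimes F) \;\cong\; \bigoplus_\pi m(\pi,\Gamma)\, H^*(\g,K;\, H_{\pi,K}\otimes F).
\]
It therefore suffices to show that for every irreducible unitary $\pi$, the summand $H^*(\g,K;\, H_{\pi,K}\otimes F)$ vanishes.

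Next I would invoke Wigner's lemma: $H^*(\g,K;\, H_{\pi,K}\otimes F) = 0$ unless the infinitesimal character of $\pi$ equals that of $F^\vee$. Using the Harish-Chandra isomorphism, the infinitesimal character of $F$ (with highest weight $\Lambda-\rho$) is $\Lambda$ in $\rho$-shifted coordinates, so the infinitesimal character of $F^\vee$ is in the Weyl orbit of $-w_0\Lambda$. The key input from unitarity is that the infinitesimal character of a unitary $(\g,K)$-module is forced to be \emph{Hermitian}: when transported to a $\theta$-stable Cartan, it is stable (up to the Weyl group) under $-\theta$. Combining Wigner with the Hermitian condition, a nonvanishing summand would force $\Lambda$ and $\theta\Lambda$ to lie in a common Weyl orbit. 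Since $\Lambda$ is dominant and is the unique dominant element of its Weyl orbit, this would force $\theta\Lambda = \Lambda$, contradicting the hypothesis.

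The main obstacle will be setting up the Hermitian-infinitesimal-character argument cleanly: one must carefully identify how $\theta$ acts on characters of the Harish-Chandra center (transporting between compact and split Cartans, and accounting for the Weyl group), and then check that the constraint forced by unitarity really does amount to $\theta$-stability of $\Lambda$ up to the Weyl group. Once that bookkeeping is in place, the vanishing is immediate from Wigner together with the hypothesis $\theta\Lambda \neq \Lambda$. I would expect the argument to follow the template used in Borel--Wallach's Chapter I (via their cohomological analysis of $(\g,K)$-modules with infinitesimal character) applied degree by degree, yielding the stated vanishing of $H^*$ in every degree rather than merely in some specific range.
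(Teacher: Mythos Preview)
Your proposal is correct and matches the paper's reduction: Matsushima's formula plus a vanishing result for the $(\g,K)$-cohomology of each unitary summand, which the paper simply cites as \cite[II~6.12]{bw}. The only difference is in how that per-summand vanishing is argued: in Borel--Wallach it is established via Kuga's formula for the Laplacian on the relative Lie algebra complex (using only the Casimir eigenvalue together with the positivity furnished by unitarity), whereas you sketch a route through Wigner's lemma and the $\theta$-symmetry of unitary infinitesimal characters --- both approaches are valid and yield the same conclusion.
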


In this theorem $\rho$ is the half sum of the positive roots, and $\theta$ is a fixed Cartan involution.
This theorem is deduced from the following vanishing result for $(\g,\k)$-cohomology, and the decompositon of the cohomology of $\Gamma$ in terms of unitary representations.

\begin{theorem}{\cite[II 6.12]{bw}}
    Let $F$ be the irreducible finite dimensional $\g_\C$-representation with highest weight $\Lambda-\rho$.
    Let $(\pi,H)$ be a unitary $(\g,\k)$-module with Casimir element operating by a scalar operator.
    If $\theta\Lambda \neq \Lambda$, then $\Ext^*_{\g,\k}(F,H) = 0$.
\end{theorem}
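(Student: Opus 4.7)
The plan is to reduce the vanishing of $\Ext^*_{\g,\k}(F,H)$ to an infinitesimal-character matching, and then to show that unitarity together with the hypothesis $\theta\Lambda\neq\Lambda$ obstructs this matching.

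First, I would compute $\Ext^*_{\g,\k}(F,H)$ via the standard Koszul model $\Hom_\k(\wedge^\bullet(\g/\k), F^*\otimes H)$. The Casimir $\Omega\in Z(U(\g))$ acts on $F^*\otimes H$, and hence on this complex, commuting with the differential. By hypothesis $\Omega$ acts by scalars on both $F$ and $H$, and a standard application of Wigner's (or Dixmier's) lemma shows that if $\Ext^*_{\g,\k}(F,H)\neq 0$ then the infinitesimal characters of $F^*$ and of $H$ must coincide. Since $F$ has Harish--Chandra parameter $\Lambda$ (its highest weight is $\Lambda-\rho$), matching amounts to requiring that the Harish--Chandra parameter $\nu$ of $H$ lie in the Weyl orbit $W\cdot\Lambda$.

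Second, I would invoke unitarity. The unitarity of $(\pi,H)$ means the involution $X\mapsto -\tau X$ on $\g$, where $\tau$ is complex conjugation with respect to the compact form $\k\oplus i\mathfrak{p}$, intertwines $\pi$ with its Hermitian dual. Through Harish--Chandra's isomorphism this induces on parameters the involution $\nu\mapsto -\theta\overline{\nu}$, under which the infinitesimal character of $H$ must be invariant modulo $W$. Because $\Lambda$ is real-valued on the real form of the Cartan (being the highest weight of a finite-dimensional representation), this condition collapses to $\theta$-stability of $W\cdot\Lambda$, i.e.\ $\theta\Lambda\in W\cdot\Lambda$. After fixing a $\theta$-stable Cartan $\h=\t\oplus\mathfrak{a}$ and a $\theta$-stable positive system, $\Lambda$ is the unique dominant element of its orbit and $\theta\Lambda$ is again dominant, so $\theta\Lambda\in W\cdot\Lambda$ forces $\theta\Lambda=\Lambda$. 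Contrapositively, under $\theta\Lambda\neq\Lambda$ no unitary $(\g,\k)$-module $H$ with scalar Casimir action can share an infinitesimal character with $F^*$, whence $\Ext^*_{\g,\k}(F,H)=0$ in every degree.

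The main obstacle is the second step: translating the unitarity of $H$ into the precise statement $W\cdot\nu = W\cdot(-\theta\overline{\nu})$ on the Harish--Chandra parameter requires careful bookkeeping of the real form, the compact form, the Cartan involution $\theta$, and the complex conjugation on $\h^*$. Step one is a routine Wigner-lemma argument, and the concluding root-system fact is a short check once a $\theta$-stable positive system is in hand.
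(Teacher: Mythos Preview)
The paper does not prove this theorem; it merely quotes it from Borel--Wallach as background for the discussion of parallel weights, and supplies no argument. There is therefore no proof in the paper to compare your proposal against.

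On the merits of your sketch: the overall strategy---force an infinitesimal-character match via a Wigner-type lemma, then use unitarity to show the parameter must be $\theta$-stable modulo the Weyl group---is the right shape and is essentially how Borel--Wallach proceed. One point to watch: the hypothesis only says that the \emph{Casimir} acts by a scalar on $H$, not that $H$ admits an infinitesimal character, so your Step~1 as written (placing a Harish--Chandra parameter $\nu$ of $H$ into $W\cdot\Lambda$) presupposes more structure on $H$ than is given. You would need either to reduce to the case where $H$ has an infinitesimal character (e.g.\ by a spectral decomposition of $H$ under $Z(U(\g))$, checking that the summands remain unitary), or to run the argument at the level of Casimir eigenvalues and positivity directly, which is closer to the Borel--Wallach treatment via Kuga's formula. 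Your identification of Step~2 as the delicate bookkeeping step is accurate.
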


\subsection{$\theta\Lambda = \Lambda$ when $\g$ is complex}
For $\g$ the Lie algebra of a \textit{complex} group $G$ (which is what we are looking at since the fields we work over are CM), we have
\[ \g_\C = \g \otimes_\R\C \cong \g \oplus \g, \]
where the action on second factor is twisted by conjugation.
A highest weight $\Lambda$ for $\g_\C$ can then be thought of as a pair of highest weights $\Lambda = (\lambda_1,\lambda_2)$ for each factor of $\g$.

\begin{prop}
$\theta$ ``swaps and dualizes $\Lambda = (\lambda_1,\lambda_2)$'', i.e.~
\[ \theta(\lambda_1,\lambda_2) = (-w_0\lambda_2,-w_0\lambda_1). \]
\end{prop}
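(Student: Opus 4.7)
My plan is to realize the decomposition $\g_\C \cong \g \oplus \g$ concretely via eigenspaces of the complex structure on $\g$, compute how $\theta$ (extended complex-linearly to $\g_\C$) acts in this picture, and then convert the resulting action on weights into one on highest weights---the $-w_0$ arising because $\theta$ does not preserve the chosen Borel. Specifically, let $J\colon \g\to\g$ denote multiplication by $i$ viewed as a real-linear operator; extending complex-linearly to $J\otimes 1$ on $\g_\C = \g\otimes_\R\C$, its $\pm i$-eigenspaces give $\g_\C = \g^{(+)}\oplus \g^{(-)}$. The formulas $\phi_\pm(X) = X\otimes 1 \mp JX\otimes i$ define a complex-linear isomorphism $\phi_+\colon \g\xrightarrow{\sim}\g^{(+)}$ and a complex-antilinear isomorphism $\phi_-\colon \g\xrightarrow{\sim}\g^{(-)}$, realizing the paper's identification $\g_\C\cong \g\oplus \g$ with the second factor ``twisted by conjugation'' (really $\bar\g$). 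Fix a Cartan $\t_0\subset \k$ of the compact real form, so $\h = \t_0 + i\t_0$ is a Cartan of $\g$; on its real subspace $i\t_0$ (where roots take real values) $\theta$ acts as $-1$.

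Since $\theta$ is $+1$ on $\k$ and $-1$ on $i\k$, it anticommutes with $J$, so the complex-linear extension of $\theta$ to $\g_\C$ interchanges $\g^{(+)}$ and $\g^{(-)}$. A direct computation with the formulas for $\phi_\pm$ shows that, under the identification $\g_\C\cong \g\oplus \bar\g$, $\theta$ becomes the map $(X,Y)\mapsto (\theta Y, \theta X)$. Restricting to the real Cartan $i\t_0\oplus i\t_0\subset \h_\C$ and dualizing yields the action $\theta^*(\mu_1,\mu_2) = (-\mu_2, -\mu_1)$ on the real weight lattice.

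To pass from this action on weights to the claimed action on highest weights, consider the $\theta$-pullback of an irreducible $V_{\lambda_1}\boxtimes V_{\lambda_2}$: it remains irreducible, and by the previous paragraph its weight set is $\{(-\mu_2,-\mu_1) : \mu_i \text{ a weight of } V_{\lambda_i}\}$. The highest weight of the twist with respect to the fixed Borel is the coordinate-wise maximum in dominance order, obtained by making each $-\mu_i$ as large as possible, i.e., taking $\mu_i$ to be the lowest weight $w_0\lambda_i$ of $V_{\lambda_i}$. This produces the highest weight $(-w_0\lambda_2,-w_0\lambda_1)$, as claimed. The main subtlety---and the source of the factor $-w_0$---is that $\theta$ does \emph{not} preserve the chosen Borel of $\g_\C$: because $\theta(e_\alpha)$ lies in the $-\alpha$-root space, $\theta$ swaps the Borel with its opposite, so the naive action $(-\mu_2,-\mu_1)$ produces antidominant weights that must be brought back to the dominant chamber via $w_0$.
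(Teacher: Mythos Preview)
Your argument is correct and reaches the same conclusion as the paper, but by a different route. The paper's proof is short and concrete: it chooses a faithful linear realization $\g \hookrightarrow \gl_n(\R)$, invokes Knapp's result that the Cartan involution can be arranged to be negative-conjugate-transpose $X\otimes z \mapsto (-{}^tX)\otimes \bar z$, and then reads off $\theta(X,Y) = (-{}^tY,-{}^tX)$ directly in matrix terms; the translation to highest weights is just the standard fact that the dual of $V_\lambda$ has highest weight $-w_0\lambda$. Your approach is more intrinsic: you never pick a matrix embedding, instead using the anticommutation of $\theta$ with the complex structure $J$ (a consequence of the Cartan decomposition $\g = \k \oplus i\k$) to see abstractly that $\theta$ swaps the $\pm i$-eigenspaces of $J\otimes 1$, and then computing the weight action directly on the real Cartan $i\t_0$. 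The paper's argument is shorter because it outsources the structural content to Knapp; yours is self-contained and makes the mechanism behind the $-w_0$ (namely that $\theta$ carries the chosen Borel to its opposite) more transparent.
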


\begin{proof}
Since we are assuming $G_{/\C}$ is a reductive algebraic group, it is linear.
Let $\g$ denote the Lie algebra, and choose an embedding $\g \hookrightarrow \gl_n(\R)$, inducing an embedding $\g_\C \hookrightarrow \gl_n(\C)$.
Then by \cite[Prop.~6.28]{knapp} $\theta$ can be chosen as negative-conjugate-transpose on $\g_\C$:
\[ \theta(X\otimes z) = (-\,^tX)\otimes \ol z. \]
In terms of the isomorphism $\g_\C \cong \g\oplus \g$, this says
\[ \theta(X,Y) = (-\,^tY,-\,^tX). \]
Thus on highest weight representations, $\theta$ swaps and dualizes, and in general the dual of a highest weight $\lambda$ representation is a highest weight $-w_0\lambda$ representation.
\end{proof}

Thus the condition that $\theta\Lambda = \Lambda$ amounts to $\lambda_1 = -w_0\lambda_2$.
In the case of a self dual weight (so any weight if $-1 \in W_G$), this says that $\lambda_1 = \lambda_2$, which could obviously be described as ``parallel weight.''


\subsection{Hodge symmetry}
The motivic analogue of what we have been referring to as ``parallel weight'' is essentially Hodge symmetry.
We will give a quick explanation, and refer the reader to \cite[\S2]{pat-ks} for more details.
Let $\mathcal M_{F,E}$ denote a Tannakian category of pure motives over $F$ with coefficients in $E$.
Fixing embeddings $F \hookrightarrow \ol E$ and $\ol E \hookrightarrow \C$ induces a Betti fiber functor $\mathcal M_{F,E} \to \Vc_E$ making $\mathcal M_{F,E}$ into a neutral Tannakian category.
Let $\mathcal G$ denote the Tannakian group, so that the fiber functor induces an isomorphism $\mathcal M_{F,E} \cong \Rep(\mathcal G)$.
Then for each $\tau: F \hookrightarrow \ol E$, Hodge decomposition gives a functor
\[ \mathcal M_{F,E} \to \Rep(\G_m), \]
which induces by Tannakian formalism a $\tau$-labeled Hodge cocharacter $\mu_\tau:\G_m \to \mathcal G$, well defined up to conjugacy.
The weight grading on $\mathcal M_{F,E}$ also induces a (central) cocharacter $\omega$ on $\mathcal G$, and Hodge symmetry implies that for any choice of complex conjugation $c$, 
\[ [\mu_\tau] = \omega - [\mu_{c\tau}], \]
where the brackets denote Weyl conjugacy classes (additive notation is used for consistency with the next section).
Then the comparison isomorphisms of $p$-adic Hodge theory imply the analogue of this statement for Hodge-Tate cocharacters of Galois representations arising in the \'etale cohomology of motives.
Suppose $F$ is a CM field with maximal totally real subextension $F^+$, $v\mid p$ is a place of $F^+$ which splits $v = w\ol w$ in $F$, and $\rho:\Gamma_F \to G(\ol\Q_p)$ is a Galois representation belonging to a compatible system coming from the \'etale cohomology of a motive in $\mathcal M_{F,E}$.
Then each embedding $\tau:F \hookrightarrow \ol\Q_p$ has a complex conjugate embedding $\ol\tau:F\hookrightarrow \ol\Q_p$ (swapping $w$ and $\ol w$), and we have
\[ [\mu_{HT,\tau}^w] = \omega - [\mu_{HT,\ol\tau}^{\ol w}] \]
for some central weight cocharacter $\omega$.
For the definition of $\tau$-labelled Hodge-Tate cocharacters, see \S\ref{weights}.
\section{Weights for general $G$}\label{weights}

\subsection{Introduction} In this section we imitate the $\GL_2$ example of \Cref{gl2-weights} to investigate weights, weight space, parallel weights, and infinitesimal weights when $G$ is a general reductive group, and $F$ is a CM number field satisfying certain hypotheses.

\subsection{Parallel weight for general $G$, and quadratic imaginary $F$}
Suppose that $G$ is of semisimple rank $d$, and that $F/\Q$ is quadratic imaginary with $p$ split.
Consider a representation
\[ \rho: \Gamma_F \to G(\ol\Q_p) \]
which is nearly ordinary at $v,\ol v \mid p$ for a fixed Borel $B$ containing a split maximal torus $T$ and having unipotent radical $N$.
Let $\mu_{HT}^v$ and $\mu_{HT}^{\ol v}$ denote choices of Hodge-Tate cocharacters for $\rho_v:=\rho|_{\Gamma_{F_v}}$ and $\rho_{\ol v}:=\rho|_{\Gamma_{F_{\ol v}}}$, where $p = v\ol v$.

\begin{defn}
The representation $\rho$ has \textbf{parallel Hodge-Tate weight} if there exists a central cocharacter $\omega$ such that
\[ [\mu_{HT}^v] = \omega - [\mu_{HT}^{\ol v}]. \]
The brackets denote Weyl conjugacy classes, and $\omega$ is called the \textit{weight cocharacter}.
\end{defn}

\begin{rmk}
    In general, $\mu_{HT}^v$ and $\mu_{HT}^{\ol v}$ are only determined up to Weyl conjugacy.
    However, just as in the $\GL_2$ case, if we require $\rho_v$ and $\rho_{\ol v}$ to be \textit{non-split} and \textit{geometric}, then we can use $p$-adic Hodge theory to canonically choose $\mu_{HT}$ and $\mu_{HT}^{\ol v}$.
\end{rmk}

\subsection{Non-split representations}\label{ss-non-split}

Fix a pinning of $G$ with Borel subgroup $B$, and suppose that for $v\mid p$ that $\rho_v$ is nearly ordinary with respect to $B$.
The Lie algebra $\b$ of $B$ admits a $B$-stable filtration by root height:
\[ F^r\b = \bigoplus_{ht(\alpha)\geq r}\g_\alpha. \]
For $r\geq 1$, let $N_r$ denote the closed normal subgroup of $B$ with Lie algebra $F^r\b$ (thus $N := N_1$ is the unipotent radical of $B$).
Fixing a spliting $B = N\rtimes T$, we can write
\[ \Gamma_{F_v} \xrightarrow{\rho_v} B(\ol\Q_p) \twoheadrightarrow (B/N_2)(\ol\Q_p) \cong (N/N_2\rtimes T)(\ol\Q_p) \]
as the semi-direct product of $\exp(\phi)$ with $\Gamma_{F_v} \to B/N \cong T$ for some $\phi:\Gamma_{F_v}\to F^1\b/F^2\b$.
Further, $\phi$ defines a class
\[ [\phi] \in H^1(\Gamma_{F_v},F^1\b/F^2\b) \cong H^1(\Gamma_{F_v},\bigoplus_{\alpha \in \Delta}\g_\alpha) \cong \bigoplus_{\alpha \in \Delta} H^1(\Gamma_{F_v},\ol\Q_p(\alpha\circ\rho_v)). \]
For each $\alpha \in \Delta$, let $[\phi]_\alpha$ denote the projection of $[\phi]$ onto $H^1(\Gamma_{F_v},\ol\Q_p(\alpha\circ \rho_v))$.

\begin{defn}\label{d-non-split}
    Say that $\rho_v$ is \textbf{non-split} if $[\phi]_\alpha$ is non-zero for every $\alpha \in \Delta$. 
\end{defn}

By \Cref{p-bloch-kato}, we have the following.

\begin{prop}\label{geom-prop}
    Suppose that $\rho$ is nearly ordinary, non-split, and de Rham at $v\mid p$, taking values in a choice of Borel $B$.
    Then for each simple root $\alpha \in \Delta$, $\alpha\circ \rho_v$ has non-negative Hodge-Tate weight.
\end{prop}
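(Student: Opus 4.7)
The goal is, for each simple root $\alpha\in\Delta$, to produce a non-zero de Rham extension class in $H^1_g(\Gamma_{F_v},\ol\Q_p(\alpha\circ\rho_v))$ and to conclude via \Cref{p-bloch-kato} that the Hodge--Tate weight of $\alpha\circ\rho_v$ is a non-negative integer. The non-splitness hypothesis will supply a non-zero class in $H^1$, while the de Rham hypothesis on $\rho_v$ will force that class to lie in $H^1_g$.

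First, for each $\alpha\in\Delta$, I would construct an algebraic homomorphism of algebraic groups
\[
B \twoheadrightarrow B/N_2 \cong (N/N_2)\rtimes T \twoheadrightarrow U_\alpha\rtimes T \xrightarrow{\,(\mathrm{id},\alpha)\,} U_\alpha\rtimes\G_m \hookrightarrow \GL_2,
\]
where the middle projection kills every simple root subgroup other than $U_\alpha$ (well-defined since the distinct simple roots give distinct $T$-weight spaces of $N/N_2$), and the final inclusion is $(u,t)\mapsto\bpm t & u\\ 0 & 1\epm$. Composing with $\rho_v$ yields a representation
\[
\tilde\rho_\alpha:\Gamma_{F_v}\to\GL_2(\ol\Q_p),\qquad g\mapsto\bpm\alpha\circ\rho_v(g) & X_\alpha(g) \\ 0 & 1\epm,
\]
whose off-diagonal entry $X_\alpha$ is a cocycle on $\Gamma_{F_v}$ with values in $\ol\Q_p(\alpha\circ\rho_v)$ whose class in $H^1$ is precisely $[\phi]_\alpha$; the cocycle identity for $X_\alpha$ is exactly what makes $\tilde\rho_\alpha$ a homomorphism.

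Next I would argue that $\tilde\rho_\alpha$ is de Rham. Since $\rho_v$ is de Rham, its composition with any algebraic representation of $G$ is de Rham; and every finite-dimensional algebraic representation of $B$ sits as a subquotient of the restriction to $B$ of some algebraic representation of $G$ (via $\Ind_B^G$ and Frobenius reciprocity). Hence $\tilde\rho_\alpha$ arises as a subquotient of a de Rham $G$-composition, and the de Rham property is stable under subquotients, so $\tilde\rho_\alpha$ is de Rham. The non-splitness hypothesis $[\phi]_\alpha\neq 0$ then asserts that $\tilde\rho_\alpha$ is a non-split de Rham extension, so $[\phi]_\alpha$ gives a non-zero element of $H^1_g(\Gamma_{F_v},\ol\Q_p(\alpha\circ\rho_v))$. \Cref{p-bloch-kato} then delivers the desired non-negativity of the Hodge--Tate weight.

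The main obstacle is the de-Rham-preservation step: one must be comfortable that composing $\rho_v$ with a (non-semisimple) algebraic representation of the Borel $B$ produces a de Rham Galois representation. The $\Ind_B^G$ argument sketched above is standard but fiddly to write out carefully; a concrete alternative, if desired, is to fix a faithful algebraic embedding $G\hookrightarrow\GL_N$, refine its standard flag by a $B$-stable filtration so that a suitable two-step graded piece realizes $\tilde\rho_\alpha$, and invoke stability of de Rham under subquotients at the level of $\GL_N$-representations directly. Everything else in the proof is an unwinding of definitions.
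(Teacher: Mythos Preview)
Your proposal is correct and follows exactly the approach the paper intends: the paper's entire ``proof'' is the single sentence ``By \Cref{p-bloch-kato}, we have the following,'' and you have filled in precisely the details that sentence suppresses---namely, that non-splitness produces a nonzero class $[\phi]_\alpha$ and that de Rham-ness of $\rho_v$ forces this class into $H^1_g$. Your care with the de Rham preservation step (every algebraic $B$-representation is a subquotient of the restriction of a $G$-representation, hence $\tilde\rho_\alpha$ inherits de Rham from $\rho_v$) is the one genuinely nontrivial point, and both justifications you offer are valid; the Tannakian fact that a closed subgroup inclusion $B\hookrightarrow G$ makes $\Rep(B)$ generated under subquotients by restrictions from $\Rep(G)$ is the cleanest way to phrase it.
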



Let $\Delta = \{\alpha_1,\ldots,\alpha_d\}$ denote the simple roots corresponding to the choice of $B$ and $T$.
For $i = 1,\ldots, d$, let $\chi_{v,i}:\Gamma_{\Q_p} \to \ol\Q_p^\times$ be the character $\chi_{v,i} = \alpha_i \circ \rho_v$, and define $\chi_{\ol v,i}$ similarly.

\begin{cor}
    Assume that $\rho$ has parallel weight, and satisfies the hypotheses of \Cref{geom-prop} at each of $v$ and $\ol v$ for Borel subgroups $B$ and $B'$ respectively.
    Let $w \in W_G$ such that $wB' = B^{op}$.
    Then $\chi_{v,i}$ and $-w\chi_{\ol v,i}$ have the same Hodge-Tate weight for each $i = 1,\ldots, d$.
\end{cor}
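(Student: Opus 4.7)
The plan is to recast the corollary as a single equality of Hodge-Tate cocharacters in $X_\bullet(T)$ for a common maximal torus $T \subset B \cap B'$, namely
\[
\mu_{HT}^v \;=\; \omega - w\mu_{HT}^{\ol v},
\]
from which the weight identity follows by pairing both sides with each simple root $\alpha_i$. Via the nearly ordinary structure, $\rho_v$ and $\rho_{\ol v}$ project through $B \twoheadrightarrow T$ and $B' \twoheadrightarrow T$ to characters $\rho_v^T, \rho_{\ol v}^T$, and each character $\lambda \circ \rho_v^T$ for $\lambda \in X^*(T)$ has scalar Hodge-Tate weight $\langle \lambda, \mu_{HT}^v\rangle$ for a canonical representative $\mu_{HT}^v \in X_\bullet(T)$ of its Weyl conjugacy class. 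In particular $\chi_{v,i}$ has Hodge-Tate weight $\langle \alpha_i, \mu_{HT}^v\rangle$, and similarly for $\ol v$.

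The first substantive step is to invoke \Cref{geom-prop}: non-splitness together with de Rham-ness forces $\langle \alpha_i, \mu_{HT}^v\rangle \geq 0$ for every simple root $\alpha_i \in \Delta(B)$, which is precisely the $B$-dominance of $\mu_{HT}^v$; likewise $\mu_{HT}^{\ol v}$ is $B'$-dominant. I would then upgrade the Weyl-orbit identity $[\mu_{HT}^v] = \omega - [\mu_{HT}^{\ol v}]$ to the displayed cocharacter equality as follows. Since $\omega$ is Weyl-invariant (being central), the $B$-dominant representative of the right side equals $\omega - \mu^*$, where $\mu^*$ is the $B$-antidominant (equivalently, $B^{op}$-dominant) element of the Weyl orbit of $\mu_{HT}^{\ol v}$. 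Since $\mu_{HT}^{\ol v}$ sits in the $B'$-positive chamber, the unique Weyl element carrying the $B'$-positive chamber to the $B^{op}$-positive chamber is precisely the given $w$, so $\mu^* = w\mu_{HT}^{\ol v}$; matching $B$-dominant representatives on both sides gives the desired equality.

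Finally, pairing with $\alpha_i$ and using $\langle \alpha_i, \omega\rangle = 0$, the Hodge-Tate weight of $\chi_{v,i}$ reduces to $-\langle \alpha_i, w\mu_{HT}^{\ol v}\rangle$, which is the Hodge-Tate weight of $-w\chi_{\ol v,i}$ once one interprets $-w$ as acting on $\chi_{\ol v,i}$ through the natural $W_G$-action on $X^*(T)$. The main subtlety I anticipate is the dominance step: one needs the non-split hypothesis to be strong enough that $\mu_{HT}^{\ol v}$ has no nontrivial Weyl stabilizer outside the center, so that the promoting element $w$ is unambiguous. If some pairing $\langle \alpha_i, \mu_{HT}^{\ol v}\rangle$ vanishes then the identification of $w$ is a priori ambiguous, and one must argue such degenerate directions are genuinely central (hence Weyl-invariant) so that the weight identity becomes tautological along them.
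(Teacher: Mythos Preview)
Your approach is essentially the same as the paper's: use \Cref{geom-prop} to pin down $B$-dominant (resp.\ $B'$-dominant) representatives $\mu_{HT}^v$, $\mu_{HT}^{\ol v}$, observe that $\omega - w\mu_{HT}^{\ol v}$ is also $B$-dominant, and then upgrade the Weyl-orbit identity to an equality of cocharacters before pairing with each $\alpha_i$. The paper simplifies to $B=B'$, $w=w_0$, but the argument is identical.

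Your closing worry about regularity and Weyl stabilizers is unnecessary, and you should drop it. The element $w$ is fixed by the hypothesis $wB'=B^{op}$ (the Weyl group acts simply transitively on Borels through $T$), so there is no ambiguity in $w$ itself. And the step promoting $[\mu_{HT}^v]=\omega-[\mu_{HT}^{\ol v}]$ to $\mu_{HT}^v=\omega-w\mu_{HT}^{\ol v}$ only uses that the \emph{closed} dominant Weyl chamber is a fundamental domain for the $W_G$-action: two dominant cocharacters in the same orbit are equal, full stop, regardless of whether any $\langle\alpha_i,\mu_{HT}^{\ol v}\rangle$ vanishes. So no extra hypothesis on stabilizers is needed, and the ``degenerate directions'' discussion can be deleted.
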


\begin{rmk}
    $W_G$ acts on $\chi_{\ol v,i}$ by $(w\cdot \chi_{v,i})(\sigma) = \alpha_i(w\cdot \pi(\rho_v(\sigma)))$, where $\pi:B \to T$ is the projection map.
\end{rmk}

\begin{rmk}
    For simplicity, we will always assume $B = B'$, so that $w = w_0$.
\end{rmk}

\begin{proof}[Proof of Corollary]
    The hypotheses and \Cref{geom-prop} imply that if we choose $\mu_{HT}^v$ and $\mu_{HT}^{\ol v}$ such that
    \[ \langle\mu_{HT}^v,\alpha_i\rangle = \HT(\chi_{v,i}) \]
    and similarly for $\ol v$, then $\mu_{HT}^v$ and $\mu_{HT}^{\ol v}$ both lie in the dominant Weyl chamber.
    Since $\rho$ has parallel weight, $[\mu_{HT}^v] = \omega - [\mu_{HT}^{\ol v}]$ for a central character $\omega$.
    Notice that
    \[ \langle\omega-w_0\mu_{HT}^{\ol v},\alpha_i\rangle = \langle-w_0\mu_{HT}^{\ol v},\alpha_i\rangle \geq 0, \]
    by regularity and since $-w_0\mu_{HT}^{\ol v}$ also lies in the dominant Weyl chamber.
    But since $\mu_{HT}^v$ is conjugate to $\omega - w_0\mu_{HT}^{\ol v}$, and both lie in the dominant Weyl chamber,
    \[ \mu_{HT}^v = \omega - w_0\mu_{HT}^{\ol v}. \]
    Thus,
    \[ \HT(\chi_{v,i}) = \langle\mu_{HT}^v,\alpha_i\rangle = \langle\omega - w_0\mu_{HT}^{\ol v},\alpha_i\rangle = \HT(-w_0\chi_{\ol v,i}). \]
\end{proof}

\begin{rmk}
    If $-1 \in W_G$, then this says $\chi_{v,i}$ and  $\chi_{\ol v,i}$ have the same Hodge-Tate weight for each $i = 1,\ldots, d$, an exact generalization of parallel weight in the $\GL_2$ case.
\end{rmk}

\subsection{Passage to infinitesimal weight for general groups}

Continue to suppose that $F/\Q$ is quadratic imaginary with $p$ split.
Let $G$ be a split reductive group.
Let $d = \rank G^{\der}$.
Suppose that
\[ \rho: \Gamma_F \to G(\O\lb X_1,\ldots,X_d\rb) \]
is a universal deformation, as constructed in \Cref{large-image}.
Consider
\[ \rho_v,\rho_{\ol v}:\Gamma_{\Q_p} \to G(\O\lb X_1,\ldots,X_d\rb). \]
By the nearly ordinary assumption, we can assume (after conjugation by some element of $\wh G$) that the images are in $B$, where $B$ is a fixed Borel of $G$ containing a split maximal torus $T$ and having unipotent radical $N$.
Let $\Delta = \{\alpha_1,\ldots,\alpha_d\}$ be the simple roots of $(G,B,T)$.

Let $\chi_{v,i}:\Z_p^\times \to \O\lb X_1,\ldots, X_d\rb^\times$ be defined by $\chi_{v,i} = \alpha_i \circ \rho_v$, and define $\chi_{\ol v,i}$ similarly.

For each $i = 1,\ldots,d$
\[ \chi_{v,i}\times (-w_0\chi_{\ol v,i}) : \O_{F,p}^\times \to \O\lb X_1,\ldots, X_d\rb^\times \hookrightarrow \O_{\D^d}(\D^d)^\times \] gives a map 
\[ \gamma_i:\D^d \to \W. \]

\begin{defn}
For a cocycle $\varphi \in H^1(\Gamma_{F_v},\b^0)$ the \textbf{infinitesimal weight} of $\varphi$ is given by applying to $\varphi$ the composition of maps
\[
\begin{tikzcd}
H^1(\Gamma_{\Q_p},\b^0) \ar[r] \ar[dr,"\beta",dashed] & H^1(\Gamma_{\Q_p},\b^0/\n) \ar[r,"\cong"] & \Hom(\Gamma_{\Q_p}^{\ab},\t^0) \ar[d,"\text{res + CFT}"] \\
& k^d & \Hom(\Z_p^\times,k^d) \ar[l,"ev_{1+p}"']
\end{tikzcd}
\]
Say that $\varphi_v \in H^1(\Gamma_{F_v},\b^0)$ and $\varphi_{\ol v} \in H^1(\Gamma_{F_{\ol v}},\b^0)$ have \textbf{parallel infiniteseimal weight} if $\beta(\varphi_v) = -w_0\beta(\varphi_{\ol v})$, where the action of the Weyl group on $k^d$ is induced by the action on $\t$.
\end{defn}

\begin{prop}
    One can associate a collection of infinitesimal weights to $\rho$ at $v$ and $\ol v$ such that either
    \begin{enumerate}
        \item the infinitesimal weights of $\rho$ are parallel, or 
        \item There is an $1\leq i \leq d$ such that $\gamma_i^{-1}(\W_\varepsilon)$ has codimension $\geq 1$ for each $\varepsilon$, and $\gamma_i^{-1}(W^{\pr})$ has codimension $\geq 1$ in the rigid topology.
    \end{enumerate}
\end{prop}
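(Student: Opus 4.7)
The plan is to imitate the proof of \Cref{gl2-prop} working one simple root at a time. Fix $u = (1+p,(1+p)^{-1}) \in \Z_p^\times \times \Z_p^\times \cong \O_{F,p}^\times$; this element generates the norm-one elements of $\O_{F,p}^\times$ modulo torsion. For each simple root $\alpha_i$ set $f_{v,i}(X) = \chi_{v,i}(1+p)$, $f_{w_0\ol v,i}(X) = (-w_0\chi_{\ol v,i})(1+p)$, and
\[ g_i(X) := f_{v,i}(X)/f_{w_0\ol v,i}(X) \in \O\lb X_1,\ldots,X_d\rb^\times. \]
As in the proof of \Cref{gl2-closed}, $\gamma_i^{-1}(\W_\varepsilon)$ is the zero locus of $g_i - \varepsilon(u)$, while $\gamma_i^{-1}(\W^{\pr})$ is the zero locus of the rigid analytic function $\log_p\circ g_i$.

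Next, I would assume case (2) fails for every $i$ and derive case (1). Failure of (2) at $i$ means either some $\gamma_i^{-1}(\W_\varepsilon)$ has codimension $0$, forcing $g_i - \varepsilon(u) \equiv 0$ by rigid analyticity, or $\gamma_i^{-1}(\W^{\pr})$ has codimension $0$, forcing $\log_p\circ g_i \equiv 0$, so that $g_i$ assumes only root-of-unity values on $\D^d$. In the latter case one needs a multi-variable analogue of the power series lemma in \Cref{gl2-prop}, which follows immediately from the one-variable version by slicing $\D^d$ along coordinate lines (on each slice the restriction of $g_i$ must be a constant root of unity, and separate constancy in each variable forces global constancy). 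Either way, there exists a root of unity $\zeta_i \in \O^\times$ with $f_{v,i}(X) = \zeta_i f_{w_0\ol v,i}(X)$ for every $i$.

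Finally, I would pass to tangent directions to read off the infinitesimal weights. For each $j = 1,\ldots,d$ the substitution $X_j \mapsto \epsilon$, $X_k \mapsto 0$ ($k\neq j$) produces $\rho^j \in \Def_{\ol\rho}^{\P}(k[\epsilon])$, and the restrictions to $\Gamma_{F_v}$ and $\Gamma_{F_{\ol v}}$ give cocycles $\varphi_v^j \in H^1(\Gamma_{F_v},\b^0)$ and $\varphi_{\ol v}^j \in H^1(\Gamma_{F_{\ol v}},\b^0)$. Tracking through the definition of $\beta$ exactly as in \Cref{gl2-prop}, the $i$-th coordinate of $\beta(\varphi_v^j)$ (with respect to the basis dual to the simple roots) is the $X_j$-linear coefficient of $f_{v,i}$ divided by its constant term, reduced mod $\varpi$; the analogous statement holds for the cocycle extracted from $-w_0\chi_{\ol v,i}$ using $f_{w_0\ol v,i}$. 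The identity $f_{v,i} = \zeta_i f_{w_0\ol v,i}$ then immediately yields $\beta(\varphi_v^j) = -w_0\beta(\varphi_{\ol v}^j)$ for every $j$, which is exactly the parallel condition of case (1).

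The principal difficulty is the Weyl-group bookkeeping in the last step: one must verify that the $d$ componentwise identities produced by the separate rigid analytic arguments indexed by $\alpha_i$ assemble into the intrinsic identity $\beta(\varphi_v) = -w_0\beta(\varphi_{\ol v})$ in $\t^0(k)$, which amounts to checking how $-w_0$ interacts with the identification $(\t^0)^\ast \cong k^d$ coming from $(\alpha_1,\ldots,\alpha_d)$. The multi-variable extension of the power series lemma is routine.
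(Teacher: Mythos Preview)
Your proof is correct and follows essentially the same approach as the paper: both dichotomize on whether each $g_i = f_{v,i}/f_{\ol v,i}$ is a constant root of unity, deduce (2) if some $g_i$ is not (your use of $\log_p\circ g_i$ to handle $\W^{\pr}$ separately is in fact more explicit than the paper, which only writes out the $\W_\varepsilon$ case), and otherwise reduce modulo $(\varpi,X_1,\ldots,X_j^2,\ldots,X_d)$ for each $j$ to extract the infinitesimal weights and conclude (1). Two small remarks: $\gamma_i^{-1}(\W_\varepsilon)$ and $\gamma_i^{-1}(\W^{\pr})$ are only \emph{contained in} the indicated zero loci (that is the direction you actually use), and the Weyl bookkeeping you flag is harmless---since $-w_0$ permutes the simple roots, the $d$ scalar identities $f_{v,i}=\zeta_i f_{w_0\ol v,i}$ are exactly the coordinate form of $\beta(\varphi_v)=-w_0\beta(\varphi_{\ol v})$ under the basis $(\alpha_1,\ldots,\alpha_d)$ of $(\t^0)^*$.
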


\begin{proof}
Let
\[ f_{v,i} := \chi_{v,i}(1+p),\qquad f_{\ol v,i} := -w_0\chi_{\ol v,i}(1+p). \]
As in the $\GL_2$ case, if $\rho_v$ and $\rho_{\ol v}$ have the same weight for some specialization $\alpha$ (of $X_1,\ldots,X_d$), then
\[ \frac{f_{v,i}(\alpha)}{f_{\ol v,i}(\alpha)} = \zeta \]
for some root of unity $\zeta$.
If $f_{v,i}/f_{\ol v,i} \neq \zeta$ as series, then for each specialization of $X_1,\ldots, X_{d-1}$, there are finitely many specializations of $X_d$ which are roots of $f_{v,i}/f_{\ol v,i} - \zeta$, so $\gamma_i^{-1}(\W_\varepsilon)$ is $\leq d-1$ dimensional for each finite character $\varepsilon$.
Thus (2) holds.

If for every $1\leq i\leq d$ we have
\[ f_{v,i} = \zeta_if_{\ol v,i} \]
for some roots of unity $\zeta_i$, then we can form the tuples $(f_{v,1},\ldots,f_{v,d})$ and $(f_{\ol v,1},\ldots,f_{\ol v,d})$ and reduce down to $k[\epsilon]$ in various ways using the quotient maps
\[ \O\lb X_1,\ldots,X_d\rb \twoheadrightarrow \O\lb X_1,\ldots,X_d\rb/(\pi,X_1,\ldots,X_{j-1},X_j^2,X_{j+1},\ldots,X_d) \]
for $j = 1,\ldots,d$.
We have
\[ f_{v,i} \equiv a_{0,i,j}^v + a_{1,i,j}^v\epsilon \mod{(\pi,X_1,\ldots,X_j^2,\ldots,X_d)} \]
for each $j$, and similarly for $\ol v$.
Since $f_{v,i}$ and $f_{\ol v,i}$ differ by a scalar, $a_{1,i,j}^v/a_{0,i,j}^v = a_{1,i,j}^{\ol v}/a_{0,i,j}^{\ol v}$ for each $i$ and $j$.
As with the $\GL_2$ case, $(a_{1,1,j}^v/a_{0,1,j}^v,\ldots,a_{1,d,j}^v/a_{0,d,j}^v)$ equals the image of 
\[ \rho_v \mod{(\pi,X_1,\ldots,X_j^2,\ldots,X_d)} \]
under the composition:
\[ 
\begin{tikzcd} 
\Def_{\ol\rho_v}^{\ord}(k[\epsilon]) \ar[r,"\cong"] & H^1(\Gamma_{F_v},\b^0) \ar[r] & H^1(\Gamma_{F_v},\b^0/\n) \ar[d,"\cong"] \\
k^d & \Hom(\Z_p^\times,k^d) \ar[l,"ev_{1+p}"'] & \Hom(\Gamma_{F_v}^{\ab},\t^0) \ar[l,"\text{res + CFT}"']
\end{tikzcd}
\]
The image of $\rho_{\ol v}$ is equal to $-w_0(a_{1,1,j}^{\ol v}/a_{0,1,j}^{\ol v},\ldots,a_{1,d,j}^{\ol v}/a_{0,d,j}^{\ol v})$ because of the use of $-w_0$ in the definition of $f_{\ol v,i}$.
Thus the infinitesimal weights of $\rho_v$ and $\rho_{\ol v}$ are parallel.
\end{proof}

\subsection{Higher dimensional weight space}

In preparation for more general number fields, namely CM number fields, we will say another word about weight space.
Just as before, we define
\[ \O_{F,p}^\times = \prod_{v\mid p}\O_{F_v}^\times, \]
and $\W$ to be the rigid space such that for each rigid space $X$,
\[\Hom_{\rig}(X,\W) = \Hom(\O_{F,p}^\times,\O_X(X)).\]
A $\ol\Q_p$-point of weight space is once again given by a character
\[ \chi:\O_{F,p}^\times \to \ol\Q_p^\times. \]
As before, $\W$ is a finite union of open $[F:\Q]$-balls.

\begin{defn}
    A weight $\chi:\O_{F,p}^\times \to \ol\Q_p^\times$ is called \textbf{locally algebraic} if it has the form
\[ \chi = \varepsilon\cdot\prod_{\sigma:F\to\ol\Q_p}\sigma^{n_\sigma} \]
for some integers $n_\sigma$, and finite order character $\varepsilon$.
Here, the product is over all embeddings $\sigma:F\to\ol\Q_p$.
$\chi$ is \textbf{algebraic} if $\varepsilon$ is trivial.
\end{defn}

Suppose that $F/\Q$ is a Galois CM extension with maximal totally real subextension $F^+$
Suppose also that each prime $v\mid p$ of $F^+$ splits in $F$, $v = w\ol w$.
In this case, each embedding $\sigma:F\to \ol\Q_p$ which factors through $F_w$ will have a ``conjugate'' embedding $\ol\sigma:F \to \ol\Q_p$ which factors through $F_{\ol w}$, defined by $\ol\sigma = \sigma\circ c$, where $c \in \Gal(F/\Q)$ denotes complex conjugation.

\begin{defn}
    We say a locally algebraic weight $\chi$ is \textbf{locally parallel} if $n_\sigma = n_{\ol\sigma}$ for each $\sigma:F\to\ol\Q_p$ and \textbf{parallel} if $\varepsilon$ is trivial.
\end{defn}
As before, we let $\W^{\pr}$ denote the subspace of locally parallel weights, $\W_0$ the space of parallel weights, and $\W_\varepsilon$ the translate of weights in $\W_0$ by the finite character $\varepsilon$.

\begin{prop}
    Let $F/\Q$ be CM, with $F^+ = F\cap F^c$.
    Let $m = [F^+:\Q]$.
    Then the closure of $\W_\varepsilon$ in either the Zariski or rigid topology is $m$-dimensional.
    The closure of $\W^{\pr}$ in the Zariski topology is $>m$ dimensional, but the closure of $\W^{\pr}$ in the rigid topology is $m$ dimensional.
\end{prop}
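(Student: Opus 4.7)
The plan is to generalize the arguments of \Cref{gl2-closed-lemma} and \Cref{gl2-closed} by identifying a parallel weight with a character factoring through the norm $N\colon \O_{F,p}^\times \to \O_{F^+,p}^\times$; explicitly, an algebraic parallel weight has the form $x\mapsto\prod_{\tau} \tau(N_{F/F^+}(x))^{n_\tau}$ with $\tau$ running over the $m$ embeddings $F^+\hookrightarrow\ol\Q_p$. The image $U := N(\O_{F,p}^\times)$ is finite-index in $\O_{F^+,p}^\times$ and hence of $\Z_p$-rank $m$, while the kernel $U_0:=\ker N$ has $\Z_p$-rank $2m-m=m$. For the closure of $\W_\varepsilon$: since translation by the finite-order character $\varepsilon$ is an automorphism of both $W:=\Spec(\Z_p\lb\O_{F,p}^\times\rb)$ and $\W$, it suffices to treat $\W_0$, and as in \Cref{gl2-closed-lemma} the surjection $\Z_p\lb\O_{F,p}^\times\rb\twoheadrightarrow\Z_p\lb U\rb$ produces a closed immersion $\Spec(\Z_p\lb U\rb)\hookrightarrow W$ whose generic fiber has dimension $m$, and the algebraic parallel weights are dense in this fiber in both the Zariski and the rigid topology by $p$-adic approximation of an arbitrary character of $U$ by $\Z$-linear combinations of the $\tau$.

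For the Zariski part of the claim on $\W^{\pr}$: as $\varepsilon$ varies through representatives for finite-order characters of $\O_{F,p}^\times$ modulo those that are themselves parallel, the restrictions $\varepsilon|_{U_0}$ run through infinitely many distinct finite-order characters of $U_0$, and for two distinct such classes the closures $\overline{\W_\varepsilon}$ and $\overline{\W_{\varepsilon'}}$ are disjoint, since they are cut out by the incompatible conditions $\chi|_{U_0}=\varepsilon|_{U_0}$ and $\chi|_{U_0}=\varepsilon'|_{U_0}$. Because $W$ is Noetherian, a closed subset of pure dimension $m$ has only finitely many irreducible components of dimension $m$, and so cannot contain infinitely many pairwise disjoint closed subsets of dimension $m$; therefore the Zariski closure of $\W^{\pr}$ has dimension strictly greater than $m$.

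For the rigid closure of $\W^{\pr}$, I choose $u_1,\ldots,u_m\in U_0$ whose images in $U_0/\mathrm{torsion}$ form a $\Z_p$-basis. The rigid analytic functions $g_i(\chi):=\log_p(\chi(u_i))$ vanish on all of $\W^{\pr}$ because any such $\chi$ satisfies $\chi(u_i)=\varepsilon(u_i)\in\mu(\ol\Q_p)$ and roots of unity are zeros of $\log_p$. By the $\Z_p$-independence of the $u_i$ they cut out a closed analytic subspace of pure codimension $m$, i.e.\ of dimension $m$; combined with the lower bound coming from the first paragraph, the rigid closure of $\W^{\pr}$ is exactly $m$-dimensional. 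The step requiring the most care is verifying that the $g_i$ indeed cut codimension exactly $m$: this follows from viewing each connected component of $\W$ locally as a polydisk via the $p$-adic logarithm on $\O_{F,p}^\times/\mathrm{torsion}$, under which $g_1,\ldots,g_m$ become $m$ of the $[F:\Q]=2m$ coordinate functions and hence form an independent system.
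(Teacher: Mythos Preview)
Your proof is correct and follows essentially the same route as the paper: the paper also reduces the claim about $\W_\varepsilon$ to $\W_0$ by translation and handles $\W_0$ via the norm map to $F^+$ (exactly as in \Cref{gl2-closed-lemma}), then argues that infinitely many disjoint $m$-dimensional $\overline{\W_\varepsilon}$'s force the Zariski closure of $\W^{\pr}$ above dimension $m$, and finally uses $p$-adic logarithms of norm-one elements to bound the rigid closure. Your version is more detailed than the paper's sketch---in particular your Noetherian argument and your verification that the $g_i$ genuinely cut codimension $m$ (via the polydisk description) fill in steps the paper leaves implicit.
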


\begin{proof}
    The proof that the closure of $\W_0$ in either topology is $m$-dimensional follows the proof of \Cref{gl2-closed-lemma}, using the norm map from $F$ to $F^+$.
    The result for $\W_\varepsilon$ for any finite character $\varepsilon$ follows by translation.
    
    Infinitely many $\varepsilon$'s implies infinitely many disjoint $\W_\varepsilon$'s in $\W^{\pr}$, which implies $\W^{\pr}$ has Zariski closure of dimension strictly larger than $m$.
    As in the proof of \Cref{gl2-closed}, $p$-adic logarithms cut out a closed rigid analytic $m$-dimensional subspace of $\W$ containing $\W^{\pr}$.
\end{proof}

\subsection{$\tau$-labeled Hodge-Tate cocharacters}
    Suppose for this section that $F/\Q_p$ is a finite extension, and let $I = \{\tau:F\hookrightarrow \ol\Q_p\}$.
    Consider a Hodge-Tate representation $\rho:\Gamma_F \to \GL(V)$ acting on a finite dimensional $\ol\Q_p$-vector space $V$.
    Then $D_{HT}(V) := (V\otimes_{\Q_p}B_{HT})^{\Gamma_F}$ is naturally a graded $F\otimes_{\Q_p}\ol\Q_p$-module.
    In the case $F = \Q_p$ that we considered in previous sections, $D_{HT}(V)$ is just a graded $\ol\Q_p$-vector space, and the nonzero graded pieces encode the Hodge-Tate weights.
    For a general $F$, $D_{HT}(V)$ is a graded module over
    \[ F\otimes_{\Q_p}\ol\Q_p \cong \prod_{\tau \in I}\ol\Q_p, \]
    so $D_{HT}(V)$ decomposes into a collection of graded $\ol\Q_p$-vector spaces, indexed by $I$.
    We can then define $\tau$-labelled Hodge-Tate weights using this collection of $\ol\Q_p$-vector spaces.
    
    For a general reductive group $G$,a representation $\rho:\Gamma_F \to G(\ol\Q_p)$, and $\tau:F\hookrightarrow\ol\Q_p$, we obtain the \textbf{$\tau$-labeled Hodge-Tate cocharacter} $\mu_{HT,\tau}$ using Tannakian formalism as before.

\subsection{Generalizing to CM number fields}

Suppose that $F/\Q$ is a Galois CM extension (the Galois assumption is not necessary, but provides uniformity across primes above $p$, which simplifies the book keeping), $F^+ = F\cap F^c$ $\zeta_p \not\in F$, and each place $v\mid p$ of $F^+$ is split in $F$ as $v = w\ol w$.
Let $m = [F^+:\Q]$, and $f = [F_w:\Q_p]$ for any place $w$ of $F$ dividing $p$.
Retain the assumptions and notation for $G$ from previous sections.

\begin{defn}
    Consider a representation $\rho:\Gamma_F \to G(\ol\Q_p)$.
    For every $v\mid p$, $v = w\ol w$, and every $\tau:F_w \hookrightarrow \ol\Q_p$ there is a conjugate embedding $\ol\tau:F_{\ol w} \hookrightarrow\ol\Q_p$.
    Say that $\rho$ has \textbf{parallel Hodge-Tate weights} if there is a central weight cocharacter $\omega$ of $G$ such that for every $v,w,\tau$,
    \[ [\mu_{HT,\tau}^w] = \omega - [\mu_{HT,\ol\tau}^{\ol w}]. \]
\end{defn}

Let $\Delta(G,B,T) = \{\alpha_1,\ldots,\alpha_d\}$ be the simple roots with respect to $B$ (for which each $\rho_w$ is nearly ordinary).
For $w\mid p$, we define characters $\chi_{w,i} = \alpha_i\circ \rho_w$ as before.
The earlier results of this section generalize to give the following.

\begin{prop}\label{full-geom-prop}
    Suppose that $\rho$ is nearly ordinary, non-split, and de Rham at each $w\mid p$, and assume that $\rho$ has parallel Hodge-Tate weights.
    Then $\chi_{w,i}$ and $-w_0\chi_{\ol w,i}$ have the same $\tau$-labeled Hodge-Tate weights for each $i$, $w\mid p$ and $\tau:F_w \hookrightarrow\ol\Q_p$.
\end{prop}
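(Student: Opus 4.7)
The plan is to imitate the proof of the corollary at the end of \Cref{ss-non-split}, but work $\tau$-by-$\tau$, replacing each $\mu_{HT}^w$ with the $\tau$-labeled cocharacter $\mu_{HT,\tau}^w$.

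The first step is to establish a $\tau$-labeled analog of \Cref{geom-prop}: for each $w\mid p$, each simple root $\alpha_i$, and each embedding $\tau:F_w\hookrightarrow\ol\Q_p$, the $\tau$-labeled Hodge-Tate weight of $\chi_{w,i}=\alpha_i\circ\rho_w$ is non-negative. The non-split hypothesis produces a non-zero class $[\phi]_{\alpha_i}\in H^1(\Gamma_{F_w},\ol\Q_p(\chi_{w,i}))$ coming from the $(B/N_2)$-quotient of $\rho_w$ described in \Cref{ss-non-split}; since $\rho_w$ is de Rham, this class in fact lies in $H^1_g(\Gamma_{F_w},\ol\Q_p(\chi_{w,i}))$. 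The Bloch-Kato dimension formula for $H^1_g$ of a character is controlled componentwise via the decomposition of $D_{dR}$ over $F_w\otimes_{\Q_p}\ol\Q_p\cong\prod_\tau\ol\Q_p$, and non-vanishing of this $H^1_g$ will force each $\tau$-labeled weight of $\chi_{w,i}$ to be non-negative.

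With this in hand, both $\mu_{HT,\tau}^w$ and $\mu_{HT,\ol\tau}^{\ol w}$ can be chosen in the dominant Weyl chamber with respect to the common Borel $B$ (recall we fix $B=B'$). The parallel-weight hypothesis says $[\mu_{HT,\tau}^w]=\omega-[\mu_{HT,\ol\tau}^{\ol w}]$ for a central cocharacter $\omega$. Since $-w_0$ preserves the dominant Weyl chamber and $\omega$ is central, $\omega-w_0\mu_{HT,\ol\tau}^{\ol w}$ is dominant and Weyl-conjugate to $\mu_{HT,\tau}^w$, hence equal to it. Pairing with $\alpha_i$ then produces the desired equality of $\tau$-labeled Hodge-Tate weights of $\chi_{w,i}$ and $-w_0\chi_{\ol w,i}$.

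The main obstacle is the first step: one must upgrade the single non-vanishing class $[\phi]_{\alpha_i}$ into non-negativity at \emph{every} individual $\tau$, rather than only a global statement about the multiset of weights. The way to handle this is to use the $\tau$-labeled refinement of $D_{dR}(\ol\Q_p(\chi_{w,i}))$ and verify that a non-trivial de Rham extension of the trivial character by $\ol\Q_p(\chi_{w,i})$ must force $\Fil^0$ of the $\tau$-component of $D_{dR}$ to be non-zero for each $\tau$, i.e., that $n_\tau\geq 0$ for each $\tau$-labeled weight $n_\tau$ of $\chi_{w,i}$. This is essentially the componentwise extension of \Cref{p-bloch-kato} from $F_v=\Q_p$ to $F_w/\Q_p$ arbitrary, and should be a direct application of Proposition 1.24 of \cite{nek}.
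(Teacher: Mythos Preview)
Your overall plan---reduce to a $\tau$-labeled version of \Cref{geom-prop}, take the representative of $[\mu_{HT,\tau}^w]$ determined by the Borel quotient (so that $\langle\mu_{HT,\tau}^w,\alpha_i\rangle$ equals the $\tau$-labeled Hodge--Tate weight of $\chi_{w,i}$), argue it is dominant, and then use that two Weyl-conjugate dominant cocharacters coincide---is exactly what the paper intends; it offers no detail beyond ``the earlier results generalize.'' So the approach matches.

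However, the step you flag as the main obstacle is a genuine gap, and your proposed resolution is incorrect. It is \emph{not} true that a non-zero class in $H^1_g(\Gamma_{F_w},\ol\Q_p(\chi))$ forces every $\tau$-labeled Hodge--Tate weight of $\chi$ to be non-negative. The Bloch--Kato dimension formula (as in \cite[Prop.~1.24]{nek}) gives
\[
\dim H^1_f\bigl(\Gamma_{F_w},\ol\Q_p(\chi)\bigr) \;=\; h^0 + \dim_{\ol\Q_p}\bigl(D_{\mathrm{dR}}(\chi)/\operatorname{Fil}^0\bigr) \;=\; h^0 + \#\{\tau : n_\tau > 0\},
\]
a \emph{sum} over the embeddings $\tau$, not a product of $\tau$-by-$\tau$ constraints. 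Concretely, over an unramified quadratic $F_w/\Q_p$ a de Rham character $\chi$ with $\tau$-weights $(1,-1)$ already has $H^1_f\subseteq H^1_g$ one-dimensional; the resulting non-split de Rham upper-triangular $\GL_2$-representation has $\HT_{\tau_2}(\chi_w)<0$, so the Borel-determined representative of $\mu_{HT,\tau_2}^w$ fails to be dominant. Thus the ``componentwise extension of \Cref{p-bloch-kato}'' you hope to read off from \cite{nek} simply does not hold when $f=[F_w:\Q_p]>1$. Either a different mechanism is needed to pin down the Weyl chamber $\tau$-by-$\tau$, or one should restrict to $p$ totally split in $F$ (so $f=1$), where your outline goes through verbatim.
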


\subsection{Passage to infinitesimal weights for CM fields} The universal deformations constructed in \Cref{rammethod} have the form
\[ \rho: \Gamma_F \to G(\O\lb X_1,\ldots,X_{md}\rb). \]
For $w\mid p$, we define characters $\chi_{w,i} = \alpha_i\circ \rho_w$ as before.
Fixing $i$ and ranging over $w$ determines maps $\gamma_i:\D^{md} \to \W$ induced by
\[ \prod_{\substack{v\mid p \\ v = w\ol w}}\chi_{w,i}\times(-w_0\chi_{\ol w,i}):\O_{F,p}^\times \to \O\lb X_1,\ldots,X_{md}\rb^\times \hookrightarrow \O_{\D^{md}}(\D^{md})^\times. \]

\begin{defn}
For a cocycle $\varphi \in H^1(\Gamma_{F_v},\b^0)$ the \textbf{infinitesimal weight} of $\varphi$ is given by applying to $\varphi$ the composition of maps
\[
\begin{tikzcd}
H^1(\Gamma_{F_v},\b^0) \ar[r] \ar[dr,"\beta",dashed] & H^1(\Gamma_{F_v},\b^0/\n) \ar[r,"\cong"] & \Hom(\Gamma_{F_v}^{\ab},\t^0) \ar[d,"\text{res + CFT}"] \\
& k^{fd} & \Hom(\O_{F_v}^\times,k^d) \ar[l,"ev_{\alpha_1,\ldots,\alpha_f}"']
\end{tikzcd}
\]
where $\alpha_1,\ldots,\alpha_f$ denotes a $\Z_p$-basis for $\O_{F_v}^\times/$torsion.
If $v\mid p$, $v = w\ol w$, $\varphi_w \in H^1(\Gamma_{F_w},\b^0)$, and $\varphi_{\ol w} \in H^1(\Gamma_{F_{\ol w}},\b^0)$, then say that $\varphi_w$ and $\varphi_{\ol w}$ have \textbf{parallel infinitesimal weight} if $\beta(\varphi_w) = -w_0\beta(\varphi_{\ol w})$.
\end{defn}

\begin{prop}\label{full-passage}
    One can associate a collection of infinitesimal weights to $\rho$ at each $w\mid p$ such that either
    \begin{enumerate}
        \item the infinitesimal weights of $\rho_w$ and $\rho_{\ol w}$ are parallel for each $w \mid p$, or 
        \item There is an $1\leq i \leq d$ such that $\gamma_i^{-1}(\W_\varepsilon)$ has codimension $\geq 1$ for each $\varepsilon$, and $\gamma_i^{-1}(W^{\pr})$ has codimension $\geq 1$ in the rigid topology.
    \end{enumerate}
\end{prop}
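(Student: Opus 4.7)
The plan is to follow the strategy of the preceding proposition for $F/\Q$ quadratic imaginary, generalizing to account for the multiple primes $v\mid p$ of $F^+$ and the multiple embeddings of each completion. First I would fix notation: for each pair $v = w\ol w$ of primes of $F$ above $p$, fix a $\Z_p$-basis $u_1,\ldots,u_f$ of $\O_{F_w}^\times$ modulo torsion, and let $\ol u_1,\ldots,\ol u_f$ denote its image under complex conjugation, a $\Z_p$-basis of $\O_{F_{\ol w}}^\times$ modulo torsion. For each simple root $\alpha_i$ and each index $k$, define power series
\[
f_{w,i,k} := \chi_{w,i}(u_k),\qquad f_{\ol w,i,k} := (-w_0\chi_{\ol w,i})(\ol u_k) \in \O\lb X_1,\ldots,X_{md}\rb^\times.
\]
Unwinding the definitions of $\W^{\pr}$ and $\gamma_i$, a specialization $\alpha \in \D^{md}(\ol\Q_p)$ lies in $\gamma_i^{-1}(\W^{\pr})$ precisely when for every pair $(w,\ol w)$ and every $k$, the ratio $f_{w,i,k}(\alpha)/f_{\ol w,i,k}(\alpha)$ is a root of unity; it lies in $\gamma_i^{-1}(\W_\varepsilon)$ precisely when that ratio equals a specific root of unity determined by $\varepsilon$ and the basis.

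Next I would split into two cases. Suppose first that for some triple $(w,i,k)$ the element $g_{w,i,k} := f_{w,i,k}/f_{\ol w,i,k}$ is not a constant root of unity. The multivariable version of the lemma from the proof of \Cref{gl2-prop} (obtained by specializing all but one variable at a time) shows that $g_{w,i,k}$ then takes some non-root-of-unity value. For any fixed $\varepsilon$, the locus $\gamma_i^{-1}(\W_\varepsilon)$ is contained in the zero set of $g_{w,i,k} - \zeta$ for the relevant $\zeta$, a proper rigid analytic subvariety of codimension $\geq 1$ in $\D^{md}$. For the full preimage $\gamma_i^{-1}(\W^{\pr})$, I would apply the $p$-adic logarithm argument of \Cref{gl2-closed}: $\log_p(g_{w,i,k})$ is a nonzero rigid analytic function on $\D^{md}$ whose zero locus contains $\gamma_i^{-1}(\W^{\pr})$, so the latter has codimension $\geq 1$ in the rigid topology. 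This gives conclusion (2).

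In the remaining case, for every $(w,i,k)$ there is a constant root of unity $\zeta_{w,i,k}$ with $f_{w,i,k} = \zeta_{w,i,k}\, f_{\ol w,i,k}$. For each $1\leq j \leq md$, expand
\[
f_{w,i,k} \equiv a_{0,i,k,j}^w + a_{1,i,k,j}^w\epsilon \pmod{(\varpi,X_1,\ldots,X_{j-1},X_j^2,X_{j+1},\ldots,X_{md})},
\]
and similarly for $f_{\ol w,i,k}$. Since multiplication by the scalar $\zeta_{w,i,k}$ preserves the ratio $a_{1,i,k,j}/a_{0,i,k,j}$, these ratios agree for $w$ and $\ol w$ across all indices. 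Reducing $\rho$ modulo the same ideals yields classes in $\Def_{\ol\rho}^\P(k[\epsilon]) \cong H^1_\P(\Gamma_{F,\Sigma},\g^0)$ whose restrictions to $\Gamma_{F_w}$ and $\Gamma_{F_{\ol w}}$ are cocycles $\varphi_w$ and $\varphi_{\ol w}$ in $H^1(\cdot,\b^0)$. Tracing through the composition defining $\beta$, the tuple of ratios $(a_{1,i,k,j}^w/a_{0,i,k,j}^w)_{i,k}$ recovers $\beta(\varphi_w)$ (and similarly for $\ol w$), and the $-w_0$ built into the definition of $f_{\ol w,i,k}$ precisely matches the $-w_0$ in the definition of parallel infinitesimal weight, so $\beta(\varphi_w) = -w_0\beta(\varphi_{\ol w})$, giving (1).

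The hard part is the indexing in the second case: one must verify carefully that the composition $\beta$, when applied to the reduction of $\rho$ modulo $(\varpi,X_1,\ldots,X_j^2,\ldots,X_{md})$, really produces the tuple of logarithmic derivatives $a_{1,i,k,j}^w/a_{0,i,k,j}^w$ after evaluating at $u_1,\ldots,u_f$, and that the Weyl element $-w_0$ inserted in the definition of $f_{\ol w,i,k}$ transports to the $-w_0$ appearing in the definition of parallel infinitesimal weight. Once this compatibility is made precise, the proof runs exactly as in the quadratic imaginary case, with more indices to track but no essentially new phenomena.
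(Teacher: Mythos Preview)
Your proposal is correct and follows essentially the same approach as the paper: split on whether each ratio $f_{w,i,k}/f_{\ol w,i,k}$ is a constant root of unity, invoke a codimension argument in case it is not, and otherwise reduce modulo the ideals $(\varpi,X_1,\ldots,X_j^2,\ldots,X_{md})$ to extract the infinitesimal weights as logarithmic derivatives. You are in fact slightly more explicit than the paper in one spot---you handle $\gamma_i^{-1}(\W^{\pr})$ directly via the rigid-analytic function $\log_p(g_{w,i,k})$, whereas the paper only spells out the $\W_\varepsilon$ case and then asserts that (2) holds.
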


\begin{proof}
Let $\alpha_1,\ldots,\alpha_f$ denote a basis for $\O_{F_v}^\times/$torsion.
Define for each $i=1,\ldots,d$ and $j=1,\ldots,f$
\[ f_{w,i,j} := \chi_{w,i}(\alpha_j),\qquad\text{and}\qquad f_{\ol w,i,j} := -w_0\chi_{\ol w,i}(\alpha_j).\]
For each $i$ and $j$ and each root of unity $\zeta$, consider the series
\[ g_{w,i,j,\zeta} := \frac{f_{w,i,j}}{f_{\ol w,i,j}} - \zeta \in \ol\O\lb X_1,\ldots,X_{md}\rb. \]
If for some $i,j,w$, $f_{w,i,j}/f_{\ol w,i,j}$ is not a constant root of unity, then for each $\zeta$, $g_{w,i,j,\zeta}$ is 0 on a codimension $\geq 1$ set of specializations.
We deduce that $\gamma_i^{-1}(W_\varepsilon)$ is codimension $\geq 1$ for every finite character $\varepsilon$, and (2) holds.

Otherwise, for every $w,i,j$ we have $f_{w,i,j} = \zeta_{w,i,j}f_{\ol w,i,j}$ for some root of unity $\zeta_{w,i,j}$.
We have maps $\O\lb X_1,\ldots,X_{md}\rb \twoheadrightarrow k[\epsilon]$ defined for $1\leq n \leq md$ by the reductions
\[ \O\lb X_1,\ldots,X_{md}\rb \twoheadrightarrow \O\lb X_1,\ldots,X_{md}\rb/(\pi,X_1,\ldots,X_n^2,\ldots,X_{md}) \cong k[\epsilon], \]
and for each $i,j,n,w$ we write
\[ f_{w,i,j} \equiv a_{0,i,j,n}^w + a_{1,i,j,n}^w\epsilon \mod{(\pi,X_1,\ldots,X_n^2,\ldots,X_{md})}. \]
Then since $f_{w,i,j}$ and $f_{\ol w,i,j}$ differ only by a constant, we have
\[ \frac{a_{1,i,j,n}^{w}}{a_{0,i,j,n}^{w}} = \frac{a_{1,i,j,n}^{\ol w}}{a_{0,i,j,n}^{\ol w}}. \]
Further, the tuple
\[ \left(\frac{a_{1,i,j,n}^{w}}{a_{0,i,j,n}^{w}}\right)_{\substack{i=1,\ldots,d \\ j=1,\ldots,f}} \]
is the image of $\rho_w \mod{(\pi,X_1,\ldots,X_n^2,\ldots,X_{md})}$ under the composition
\[ 
\begin{tikzcd} 
\Def_{\ol\rho_v}^{\ord}(k[\epsilon]) \ar[r,"\cong"] & H^1(\Gamma_{F_v},\b^0) \ar[r] & H^1(\Gamma_{F_v},\b^0/\n) \ar[d,"\cong"] \\
k^{fd} & \Hom(\O_{F_v}^\times,k^d) \ar[l,"ev_{\alpha_1,\ldots,\alpha_f}"'] & \Hom(\Gamma_{F_v}^{\ab},\t^0) \ar[l,"\text{res + CFT}"']
\end{tikzcd}
\]
and
\[ -w_0\left(\frac{a_{1,i,j,n}^{\ol w}}{a_{0,i,j,n}^{\ol w}}\right)_{\substack{i=1,\ldots,d \\ j=1,\ldots,f}} \]
is the image of $\rho_{\ol w}$.
Thus $\rho$ has parallel infinitesimal weights at each pair $w,\ol w\mid p$.
\end{proof}
\section{Sparsity of automorphic points}\label{sparse}

\subsection{Introduction} 

Suppose that $\ol\rho$ satisfies the hypotheses of \Cref{large-image} and that $\rho$ is the universal deformation produced by \Cref{large-image}.
To ease notation, we write $\Sigma$ for the set of places denoted $\Sigma \cup Q$ in the conclusion of \Cref{large-image}.
We have an infinitesimal weight map:
\begin{equation}\label{weight-map}
\begin{tikzcd}
\Def_{\ol\rho}^{\P}(k[\epsilon]) \ar[r,"\cong"] & H^1_{\P}(\Gamma_{F,\Sigma},\g^0) \ar[r,"\res"] & \prod_{w\mid p}H^1(\Gamma_{F_w},\b^0) \ar[r] & \prod_{w\mid p}k^{fd} \cong k^{[F:\Q]d}
\end{tikzcd}
\end{equation}

If we assume that $\ol\rho$ is non-split at each $w\mid p$, then $\rho$ will be non-split for all specializations off of a codimension $\geq 1$ subspace.
If any non-split specialization is geometric with parallel weight, \Cref{full-geom-prop} applies.
Then by \Cref{full-passage}, either
\begin{enumerate}
    \item the set of specializations of $\rho$ which are geometric with parallel weight has codimension $\geq 1$, or
    \item The image of (\ref{weight-map}) lands in the \textbf{parallel weight subspace} of $\prod_{w\mid p}k^{fd}$ where the coordinates for $w$ and $\ol w$ differ by $-w_0$.
\end{enumerate}

Suppose that (2) holds.
Then the image of (\ref{weight-map}) is contained in the (proper) subspace of parallel infinitesimal weights.
Following the ideas of \cite{cm}, we will show how to add an additional prime of ramification to $\Sigma$ to produce a new universal deformation satisfying the conclusions of \Cref{large-image}, but for which  (2) does not hold.
Thus (1) will hold, and we will have produced a deformation space for $\ol\rho$ with a sparsity of geometric parallel weight (and conjecturally automorphic) points.

In particular, assuming the image of (\ref{weight-map}) is contained in any proper subspace $U$, we will show how to add a prime to $\Sigma$ so that the image of (\ref{weight-map}) is not contained in $U$.
We begin with the following Galois cohomology result, which is modeled after Proposition 10 in \cite{r-def}.
Compare also to Lemma 7.7 in \cite{cm}.

\begin{prop}
    Let $\psi \in H^1_{\P}(\Gamma_{F,\Sigma},\g^0)$ be a non-zero Selmer class.
    Let $T$ denote the \v Cebotarev class determined by \Cref{ceb-lemma} for $\psi$.
    Then the restriction map
    \[ \theta:H^1(\Gamma_{F,\Sigma\cup T},\g^0) \to \bigoplus_{v\in\Sigma}H^1(\Gamma_v,\g^0)  \]
    is surjective.
\end{prop}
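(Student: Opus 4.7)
The plan is to pass to the Poitou--Tate dual, reducing the surjectivity of $\theta$ to the vanishing of an obstruction group, and then to annihilate that group using the density of $T$ together with the extended versions of axioms (5)--(6) available under the large-image hypothesis.

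First, I would factor $\theta$ as the global-to-local restriction $H^1(\Gamma_{F,\Sigma\cup T},\g^0)\to\bigoplus_{v\in\Sigma\cup T}H^1(\Gamma_{F_v},\g^0)$ composed with projection onto the $\Sigma$-factors. Extending by zero at $T$ and applying global Poitou--Tate duality (via the self-pairing $(\g^0)^*\cong\g^0$), a tuple $(c_v)_{v\in\Sigma}$ lies in the image of $\theta$ precisely when $\sum_{v\in\Sigma}\langle c_v,\xi|_{\Gamma_{F_v}}\rangle = 0$ for every $\xi \in H^1(\Gamma_{F,\Sigma\cup T},\g^0(1))$ with $\xi|_{\Gamma_{F_w}}=0$ for all $w\in T$. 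Thus $\theta$ is surjective as soon as
\[
W \;:=\; \bigl\{\xi\in H^1(\Gamma_{F,\Sigma\cup T},\g^0(1)) : \xi|_{\Gamma_{F_w}}=0 \text{ for every } w\in T\bigr\}
\]
vanishes. Since any $\xi \in W$ is unramified at every $w\in T$, in fact $\xi\in H^1(\Gamma_{F,\Sigma},\g^0(1))$. Now suppose $0\neq \xi\in W$. Axiom (4) of \Cref{ax-ram} gives $H^1(\Gal(K/F),\g^0(1))=0$, so by inflation--restriction $\xi|_{\Gamma_K}$ is a non-zero $\Gal(K/F)$-equivariant homomorphism with fixed field $K_\xi$. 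By the remark after \Cref{large-image}, axioms (5) and (6) apply to arbitrary non-zero cocycles under the large-image hypothesis, so $K_\xi$ and $KK_\psi$ are linearly disjoint over $K$, and applying (6) to $(\psi,\xi)$ produces $\sigma\in\Gamma_F$ for which $\ol\rho(\sigma)$ is regular semisimple with a unique root $\alpha$ satisfying $\alpha\circ\ol\rho(\sigma)=\ol\kappa(\sigma)$, such that $k[\psi(\Gamma_K)]$ has non-zero $\l_\alpha$-component (putting the \v Cebotarev class of $\sigma$ inside $T$) and $k[\xi(\Gamma_K)]$ has non-zero $\g_{-\alpha}$-component.

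The conclusion follows from a local eigenvalue calculation at a prime $w$ in the \v Cebotarev class of $\sigma$. Since $\alpha$ is the unique root with $\alpha\circ\ol\rho(\fr_w)=\ol\kappa(\fr_w)$, the Tate twist forces $\g_{-\alpha}$ to be the entire eigenvalue-one eigenspace of $\fr_w$ on $\g^0(1)$ (uniqueness of $\alpha$ and $\ell_w\not\equiv 1 \pmod p$ rule out the other root spaces and the torus piece). Hence $\xi|_{\Gamma_{F_w}}=0$ translates into vanishing of the $\g_{-\alpha}$-component of $\xi(\fr_w)$. Linear disjointness allows $\fr_w|_{K_\xi}$ to vary freely as $w$ ranges over the class of $\sigma$; by the cocycle identity, the attainable $\g_{-\alpha}$-components of $\xi(\fr_w)$ fill an affine translate of $(-\alpha)(\ol\rho(\sigma))\cdot[\xi(\Gamma_K)]_{-\alpha}$, which is non-zero by the choice of $\sigma$. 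This yields $w\in T$ with $\xi|_{\Gamma_{F_w}}\neq 0$, contradicting $\xi\in W$; hence $W=0$ and $\theta$ is surjective. The main obstacle is reconciling the \v Cebotarev class defining $T$ (fixed in terms of $\psi$ alone via \Cref{ceb-lemma}) with the element $\sigma$ afforded by axiom (6) for the pair $(\psi,\xi)$; this is resolved by noting that $T$ decomposes as a union over all admissible $\sigma'$ of their \v Cebotarev classes, so the particular $\sigma$ produced for $(\psi,\xi)$ falls inside $T$ automatically.
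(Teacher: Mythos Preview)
Your proposal is correct and follows essentially the same route as the paper: reduce surjectivity of $\theta$ via Poitou--Tate duality to the vanishing of $W=\{\xi\in H^1(\Gamma_{F,\Sigma\cup T},\g^0(1)):\xi|_{\Gamma_{F_w}}=0\ \forall w\in T\}$, then kill $W$ by a \v Cebotarev argument applied to the pair $(\psi,\xi)$. The only difference is packaging: the paper simply re-invokes \Cref{ceb-lemma} for $(\psi,\phi)$ to obtain a subclass $\tilde T\subseteq T$ on which $\phi|_{\Gamma_{F_v}}\not\in L_v^{\Ram,\perp}$ (hence $\neq 0$), whereas you unpack that lemma by hand via the eigenspace computation on $\g^0(1)$; your final paragraph about reconciling the class for $\psi$ alone with the one for $(\psi,\xi)$ is exactly the content of the paper's assertion $\tilde T\subseteq T$.
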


\begin{proof}
    For any set of primes $W$, define $P_W^1(M) = \prod_{v\in W}'H^1(\Gamma_{F_v},M)$, where the product is restricted with respect to the unramified cocycles (the image of the inflation map $H^1(\Gamma_{F_v}/I_{F_v},M) \to H^1(\Gamma_{F_v},M)$).
    Then we have restriction maps
    \[ \tilde\theta: H^1(\Gamma_{F,\Sigma\cup T},\g^0) \to P^1_{\Sigma\cup T}(\g^0), \]
    \[ \tilde\vartheta: H^1(\Gamma_{F,\Sigma\cup T},\g^0(1)) \to P^1_{\Sigma\cup T}(\g^0(1)). \]
    Notice that $P^1_{\Sigma\cup T}(\g^0)$ and $P^1_{\Sigma\cup T}(\g^0(1))$ are dual via local duality.
    By the results of global duality, $\image\tilde\theta$ is the annihilator of $\image\tilde\vartheta$ under the local duality pairing.
    Then the annihilator of $P_T^1(\g^0) + \image\tilde\theta$ in $P_{\Sigma\cup T}^1(\g^0(1))$ is the intersection of the annihilators of $P_T^1(\g^0)$ and $\image\tilde\theta$, which we can describe as
    \[ \{\tilde\vartheta(\phi) : \phi \in H^1(\Gamma_{F,\Sigma\cup T},\g^0(1)),\ \phi|_{\Gamma_{F_v}} = 0\ \forall v \in T\}. \]
    I claim that any $\phi$ is trivial.
    Assuming the claim for a moment, we conclude that
    \[ P_T^1(\g^0) + \image\tilde\theta = P_{\Sigma\cup T}^1(\g^0), \]
    so that (all cohomology applied to the module $\g^0$)
    \begin{align*}
        \coker\theta &= \coker\left(H^1(\Gamma_{F,\Sigma\cup T}) \xrightarrow{\tilde\theta} P^1_{\Sigma\cup T} \twoheadrightarrow \frac{P_{\Sigma\cup T}^1}{P_T^1} \cong \bigoplus_{v\in \Sigma} H^1(\Gamma_{F_v})\right) \\
        &= \frac{P^1_{\Sigma\cup T}}{P_T^1 + \image\tilde\theta} = 0,
    \end{align*}
    as desired.
    
    In remains to prove the claim.
    Suppose that there exists a non-zero $\phi \in H^1(\Gamma_{F,\Sigma\cup T},\g^0(1))$ such that $\phi|_{\Gamma_{F_v}} = 0$ for all $v \in T$.
    We can re-run \Cref{ceb-lemma} using $\psi$ and $\phi$ (see the remark after the proof of \Cref{large-image}), to produce a \v Cebotarev class $\tilde T \subseteq T$, and for any $v \in \tilde T$, $\phi|_{\Gamma_{F_v}} \not\in L_v^{\Ram,\perp}$, in particular $\phi|_{\Gamma_{F_v}} \neq 0$, contradicting the choice of $\phi$.
\end{proof}

\begin{cor}
    There is a surjection $\beta:H^1(\Gamma_{F,\Sigma\cup T},\g^0) \to \prod_{v\mid p}k^{fd}$ onto infinitesimal weight space.
\end{cor}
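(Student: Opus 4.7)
The proof combines the preceding proposition with a local surjectivity statement at each $v\mid p$. First I would establish that for each $v\mid p$ the local infinitesimal weight map $H^1(\Gamma_{F_v},\b^0)\twoheadrightarrow k^{fd}$ is surjective, by verifying surjectivity at each stage of its definition: the map $H^1(\Gamma_{F_v},\b^0)\twoheadrightarrow H^1(\Gamma_{F_v},\b^0/\n)=H^1(\Gamma_{F_v},\t^0)$ follows from the vanishing $H^2(\Gamma_{F_v},\n)=0$ proved in the nearly ordinary liftability proposition (via $(\text{REG}^*)$, the Killing form identification $\n^*\cong\g/\b$, and local duality); the identification with $\Hom(\Gamma_{F_v}^{\ab},\t^0)$ uses the triviality of the Galois action on $\t^0$; restriction to inertia combined with class field theory yields a surjection onto $\Hom(\O_{F_v}^\times,\t^0)$; and finally evaluation at the $\Z_p$-basis $\alpha_1,\ldots,\alpha_f$ of $\O_{F_v}^\times/\text{torsion}$ realizes an isomorphism with $(\t^0)^f\cong k^{fd}$ (the prime-to-$p$ torsion of $\O_{F_v}^\times$ is killed automatically since $\zeta_p\notin F_v$).

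The map $\beta$ is then defined by restriction $H^1(\Gamma_{F,\Sigma\cup T},\g^0)\to\bigoplus_{v\mid p}H^1(\Gamma_{F_v},\g^0)$ followed at each $v\mid p$ by a fixed $k$-linear extension of the composite $L_v^{\ord}\cong H^1(\Gamma_{F_v},\b^0)\twoheadrightarrow k^{fd}$ to all of $H^1(\Gamma_{F_v},\g^0)$; $(\text{REG})$ guarantees the required injection $L_v^{\ord}\hookrightarrow H^1(\Gamma_{F_v},\g^0)$, and any extension to a chosen complement will do. Given a target $(x_v)_{v\mid p}\in\prod_{v\mid p}k^{fd}$, the local surjectivity lets us pick $\varphi_v\in H^1(\Gamma_{F_v},\b^0)\cong L_v^{\ord}$ with local weight $x_v$; extending these $\varphi_v$ by zero at the remaining places of $\Sigma$ and invoking the preceding proposition, we lift to some $\xi\in H^1(\Gamma_{F,\Sigma\cup T},\g^0)$ whose restriction at each $v\mid p$ is exactly $\varphi_v$. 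Since the extended map and the original local weight map agree on $L_v^{\ord}$, we have $\beta(\xi)=(x_v)$.

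The main obstacle is really just careful bookkeeping: verifying that each of the standing hypotheses $(\text{REG})$, $(\text{REG}^*)$, and $\zeta_p\notin F_v$ does force surjectivity at the corresponding step of the local weight chain, and arranging a coherent definition of $\beta$ on all of $H^1(\Gamma_{F,\Sigma\cup T},\g^0)$ rather than merely on its nearly ordinary Selmer subgroup, where the infinitesimal weight is most naturally defined. Once those pieces are in place, the global lifting step is immediate from the surjectivity of $\theta$.
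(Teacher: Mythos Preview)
Your proposal is correct and follows essentially the same route as the paper: both arguments combine the surjectivity of $\theta$ from the preceding proposition with the local surjectivity of $H^1(\Gamma_{F_v},\b^0)\to k^{fd}$ (established via $H^2(\Gamma_{F_v},\n)=0$ from (REG*) and local duality), and both handle the passage from $L_v^{\ord}$ to all of $H^1(\Gamma_{F_v},\g^0)$ by choosing a splitting/complement. The paper simply writes $\beta$ as one long chain of surjections, whereas you organize it as ``local surjectivity plus global lifting,'' but the content is the same.
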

\begin{proof}
    The way that $H^1(\Gamma_{F,\Sigma\cup T},\g^0)$ maps to infinitesimal weight space is via the composition:
    \begin{align*}
        \beta:H^1(\Gamma_{F,\Sigma\cup T},\g^0) &\xrightarrow{\theta} \bigoplus_{v\in\Sigma}H^1(\Gamma_v,\g^0)
        \cong \bigoplus_{v\in\Sigma}\left(H^1(\Gamma_v,\g^0)/L_v\oplus L_v\right) \\
        &\twoheadrightarrow \bigoplus_{v\in\Sigma}L_v \\
        &\twoheadrightarrow \bigoplus_{v\mid p}H^1(\Gamma_{F_v},\b^0) \\
        &\xrightarrow{(*)} \bigoplus_{v\mid p}H^1(\Gamma_{F_v},\b^0/\n)
        \cong \bigoplus_{v\mid p}\Hom(\Gamma_{F_v}^{ab},\t^0) \\
        &\twoheadrightarrow \bigoplus_{v\mid p}\Hom(\O_{F_v}^\times,k^d)
        \twoheadrightarrow \bigoplus_{v\mid p}k^{fd}.
    \end{align*}
    $\theta$ is surjective by the proposition, and the isomorphism of the top line comes from choosing a splitting.
    The map $(*)$ is surjective by assumption (REG*), since part of the long exact sequence associated to 
    \[ 0 \to \n \to \b^0 \to \b^0/\n \to 0, \]
    is given by
    \[ H^1(\Gamma_{F_v},\b^0) \to H^1(\Gamma_{F_v},\b^0/\n) \to H^2(\Gamma_{F_v},\n), \]
    and by local duality,
    \[ H^2(\Gamma_{F_v},\n) \cong H^0(\Gamma_{F_v},\g/\b(1)) = 0. \]
\end{proof}

Recall that we are assuming that the image of $H^1_\P(\Gamma_{F,\Sigma},\g^0)$ under $\beta$ is contained in a proper subspace $U$.
Now choose primes $y_1,\ldots,y_k$ such that $\beta(H^1(\Gamma_{F,\Sigma\cup \{y_1,\ldots,y_k\}},\g^0)) \not\subset U$.

\begin{lem}
    We may assume $k=1$, i.e.~there exists a single prime $y \in \tilde T$ such that the image of $H^1(\Gamma_{F,\Sigma\cup\{y\}},\g^0)$ under $\beta$ is not contained in $U$.
\end{lem}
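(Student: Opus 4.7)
The plan is induction on $k$. First, observe that if $\beta\bigl(H^1(\Gamma_{F,\Sigma},\g^0)\bigr) \not\subset U$ already, the conclusion is witnessed by any $y \in \tilde T$, so I may assume $\beta\bigl(H^1(\Gamma_{F,\Sigma},\g^0)\bigr) \subset U$. Next, using the surjectivity of $\theta$ established in the preceding proposition (or, if necessary, re-running its proof with $\tilde T$ in place of $T$ — which is legitimate because $\tilde T$ is still a \v Cebotarev class designed via \Cref{ceb-lemma} to annihilate any relevant dual Selmer class), I may replace the initial primes and assume $y_1,\ldots,y_k \in \tilde T$. The base case $k = 1$ is then tautological, so suppose $k \geq 2$.

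The heart of the inductive step is the inclusion
\[ \beta\bigl(H^1(\Gamma_{F, \Sigma \cup \{y_1,\ldots,y_k\}}, \g^0)\bigr) \;\subseteq\; \beta\bigl(H^1(\Gamma_{F, \Sigma}, \g^0)\bigr) \;+\; \sum_{i=1}^{k} \beta\bigl(H^1(\Gamma_{F, \Sigma \cup \{y_i\}}, \g^0)\bigr). \]
Granting this, the first summand on the right is contained in $U$ by the reduction of the previous paragraph, so if every $\beta\bigl(H^1(\Gamma_{F, \Sigma \cup \{y_i\}}, \g^0)\bigr)$ were also contained in $U$, the whole sum would lie in $U$, contradicting the hypothesis on $y_1,\ldots,y_k$. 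Hence some single $y = y_i \in \tilde T$ satisfies the conclusion.

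To prove the inclusion, I would take $\phi \in H^1(\Gamma_{F, \Sigma \cup S}, \g^0)$ with $S = \{y_1,\ldots,y_k\}$ and seek a decomposition $\phi = \phi_0 + \sum_i \phi^{(i)}$ with $\phi_0 \in H^1(\Gamma_{F,\Sigma},\g^0)$ and each $\phi^{(i)} \in H^1(\Gamma_{F, \Sigma \cup \{y_i\}}, \g^0)$. Applying inflation-restriction to the quotient $\Gamma_{F, \Sigma \cup S} \twoheadrightarrow \Gamma_{F, \Sigma}$ — and using that $\ol\rho$ factors through $\Gamma_{F,\Sigma}$, so the kernel acts trivially on $\g^0$ — shows that $H^1(\Gamma_{F, \Sigma \cup S}, \g^0)/H^1(\Gamma_{F, \Sigma}, \g^0)$ is measured by the collection of restrictions to inertia at the primes of $S$. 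Thus the decomposition reduces to constructing, for each $i$, a class $\phi^{(i)} \in H^1(\Gamma_{F,\Sigma\cup\{y_i\}},\g^0)$ matching $\phi$ on $I_{F_{y_i}}$ and unramified at every $y_j$ with $j \neq i$.

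The main obstacle is this globalization step: it demands surjectivity of the map $H^1(\Gamma_{F,\Sigma\cup\{y_i\}},\g^0) \to H^1(I_{F_{y_i}},\g^0)^{\Frob_{y_i}}$ onto the prescribed class, subject to unramified conditions at the other $y_j$. By Poitou-Tate duality its cokernel is controlled by a dual Selmer group consisting of classes that vanish at $y_i$ (and satisfy unramified dual conditions at the remaining $y_j$). Precisely such dual Selmer classes are the ones \Cref{ceb-lemma} was invoked to exterminate, which is why $y_i$ was drawn from $\tilde T$. After possibly adjoining a controlled number of further Ramakrishna primes from $\tilde T$ — an operation that either already pushes $\beta\bigl(H^1(\Gamma_{F,\Sigma'},\g^0)\bigr)$ out of $U$ (trivially proving the lemma) or preserves the containment hypothesis for the enlarged $\Sigma'$ — the required lifts $\phi^{(i)}$ exist, the decomposition holds, and the induction closes.
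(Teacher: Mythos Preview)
Your target inclusion
\[
H^1(\Gamma_{F,\Sigma\cup\{y_1,\ldots,y_k\}},\g^0)\;=\;H^1(\Gamma_{F,\Sigma},\g^0)\;+\;\sum_{i=1}^k H^1(\Gamma_{F,\Sigma\cup\{y_i\}},\g^0)
\]
is exactly the right thing to prove, and it is what the paper establishes (in the stronger form of an isomorphism of the corresponding quotients by $H^1(\Gamma_{F,\Sigma},\g^0)$). The gap is in your justification of it.

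You try to construct $\phi^{(i)}$ by hand via Poitou--Tate and then concede that the resulting dual obstruction might not vanish, proposing to adjoin further Ramakrishna primes. That hedge is both unnecessary and dangerous: adding primes changes $\Sigma$, so you would be proving a different statement, and you give no termination argument. The key point you are missing is that the dual Selmer group for $\Sigma$ was \emph{already} killed in the setup (this is the output of \Cref{ax-ram}), hence the weaker groups $\Sha^1_{\Sigma'}(\g^0(1))$ vanish for every $\Sigma'$ among $\Sigma$, $\Sigma\cup\{y_i\}$, $\Sigma\cup\{y_1,\ldots,y_k\}$. With that in hand, Wiles's formula (together with local Euler characteristic and local duality) gives
\[
h^1(\Gamma_{F,\Sigma'},\g^0)-h^1(\Gamma_{F,\Sigma},\g^0)=\sum_{y\in\Sigma'\setminus\Sigma} h^0(\Gamma_{F_y},\g^0(1)),
\]
so both sides of the natural map
\[
\bigoplus_{i=1}^k \frac{H^1(\Gamma_{F,\Sigma\cup\{y_i\}},\g^0)}{H^1(\Gamma_{F,\Sigma},\g^0)}\;\longrightarrow\;\frac{H^1(\Gamma_{F,\Sigma\cup\{y_1,\ldots,y_k\}},\g^0)}{H^1(\Gamma_{F,\Sigma},\g^0)}
\]
have the same dimension. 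Injectivity is elementary (the $i$th summand is detected by ramification at $y_i$, and the summands do not interfere), so the map is an isomorphism and your decomposition follows immediately. There is no need to globalize $\phi^{(i)}$ by hand or to enlarge $\Sigma$.
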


\begin{proof}
    Recall that as a consequence of \Cref{ceb-lemma}, if we add the appropriate local conditions at $y_i$'s to our deformation problem, the dual Selmer group still vanishes.
    Since dual Selmer vanishes, we also have the weaker statement that
    \[ \Sha^1_{\Sigma'}(\g^0(1)) := \ker\left(H^1(\Gamma_{F,\Sigma'},\g^0(1)) \to \prod_{v\in\Sigma'}H^1(\Gamma_{F_v},\g^0(1))\right) \]
    vanishes for $\Sigma' = \Sigma$, $\Sigma\cup\{y_i\}$, $\Sigma\cup\{y_1,\ldots,y_k\},$ etc.
    Using Wiles's formula, the local Euler characteristic formula, and local duality, we deduce
    \begin{align*}
        h^1(\Gamma_{F,\Sigma'},\g^0) - h^1(\Gamma_{F,\Sigma},\g^0)
        &= \sum_{y \in \Sigma'\ssm \Sigma} \left(h^1(\Gamma_{F_y},\g^0) - h^0(\Gamma_{F_y},\g^0)\right) \\
        &= \sum_{y \in \Sigma'\ssm \Sigma} h^2(\Gamma_{F_y},\g^0) \\
        &= \sum_{y \in \Sigma'\ssm \Sigma} h^0(\Gamma_{F_y},\g^0(1)).
    \end{align*}
    Applying this to each $\Sigma' = \Sigma\cup\{y_i\}$ and to $\Sigma' = \Sigma\cup\{y_1,\ldots,y_k\}$, dimension considerations imply that the inflation maps induce an isomorphism
    \[ \bigoplus_{i=1}^k\frac{H^1(\Gamma_{F,\Sigma\cup\{y_i\}},\g^0)}{H^1(\Gamma_{F,\Sigma},\g^0)} \cong \frac{H^1(\Gamma_{F,\Sigma\cup\{y_1,\ldots,y_k\}},\g^0)}{H^1(\Gamma_{F,\Sigma},\g^0)}. \]
    So if $\phi \in H^1(\Gamma_{F,\Sigma\cup\{y_1,\ldots,y_k\}},\g^0)$ such that $\beta(\phi) \not\in U$, then there is some $i$th component of $\phi$ on the left hand side of this isomorphism whose image under $\beta$ is not in $U$.
    Let $y = y_i$.
    Then the image of $H^1(\Gamma_{F,\Sigma\cup \{y\}},\g^0)$ under $\beta$ is not contained in $U$.
\end{proof}

Fix $y \in \tilde T$ such that the image of $H^1(\Gamma_{F,\Sigma\cup \{y\}},\g^0)$ under $\beta$ is not contained in $U$, and let $\Sigma' = \Sigma\cup\{y\}$.
By the computation above,
\[ h^1(\Gamma_{F,\Sigma'},\g^0) - h^1(\Gamma_{F,\Sigma},\g^0) = h^0(\Gamma_{F_y},\g^0(1)) = 1, \]
since $\ol\rho|_{\Gamma_{F_y}}$ is of Ramakrishna type (there is a unique root acting as the cyclotomic character).
Let $\P_y$ be the local deformation condition with tangent space $L_y = L_y^{\unr} \cap L_y^{\Ram}$, and let $\P' = \P \cup \{\P_y\}$.
Then by \Cref{ceb-lemma}, $\psi \not\in H^1_{\P'}(\Gamma_{F,\Sigma'},\g^0)$.

\begin{prop}
    There exists a cocycle in $\tilde\psi \in H^1_{\P'}(\Gamma_{F,\Sigma'},\g^0)$ such that $\beta(\tilde\psi) \not\in U$.
\end{prop}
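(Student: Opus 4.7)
The plan is to transfer the non-Selmer cocycle of the preceding Lemma, which has $\beta$-image outside $U$, into a genuine $\P'$-Selmer class via Poitou--Tate global duality.

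First, I would verify that the dual Selmer group $H^1_{\P'^\perp}(\Gamma_{F,\Sigma'},\g^0(1))$ vanishes. Since $L_y = L_y^{\unr} \cap L_y^{\Ram}$ has annihilator $L_y^\perp = L_y^{\unr,\perp} + L_y^{\Ram,\perp}$, which contains $L_y^{\unr,\perp}$, the same inflation-restriction isomorphism used in the proof of \Cref{ax-ram} identifies $H^1_{\P'^\perp}(\Gamma_{F,\Sigma'},\g^0(1))$ with $H^1_{\P^\perp}(\Gamma_{F,\Sigma},\g^0(1))$, and the latter vanishes by our standing hypothesis on $\Sigma$.

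Second, Poitou--Tate then yields the surjection
\[ \lambda: H^1(\Gamma_{F,\Sigma'},\g^0) \twoheadrightarrow \bigoplus_{v \in \Sigma'} H^1(\Gamma_{F_v},\g^0)/L_v \]
with kernel $H^1_{\P'}(\Gamma_{F,\Sigma'},\g^0)$. Starting from the $\varphi \in H^1(\Gamma_{F,\Sigma'},\g^0)$ with $\beta(\varphi) \notin U$ supplied by the Lemma, I lift $\lambda(\varphi)$ back to some $\eta \in H^1(\Gamma_{F,\Sigma'},\g^0)$, so that $\tilde\psi := \varphi - \eta$ lies in the kernel $H^1_{\P'}$.

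The heart of the argument, and the main obstacle, is to ensure $\beta(\tilde\psi) = \beta(\varphi) - \beta(\eta)$ actually lies outside $U$. The freedom in $\eta$ is a coset of $H^1_{\P'}$, so $\beta(\tilde\psi)$ is only determined modulo $\beta(H^1_{\P'})$. I would split into two cases: if $\beta(H^1_{\P'}) \not\subseteq U$, any $\zeta \in H^1_{\P'}$ with $\beta(\zeta) \notin U$ furnishes the desired class by taking $\tilde\psi$ or $\tilde\psi + \zeta$. If instead $\beta(H^1_{\P'}) \subseteq U$, I would carefully track how $\beta$ depends on the splitting $H^1(\Gamma_{F_v},\g^0) \cong L_v \oplus (H^1/L_v)$ used in its definition on non-Selmer classes, and combine the surjectivity of $\lambda$ with the Lemma to show that this case forces $\beta(\varphi) \in U + \beta(H^1_{\P'}) = U$, contradicting $\beta(\varphi) \notin U$. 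The technically delicate step is this last contradiction, which is where the splitting-dependence of $\beta$ on non-Selmer classes must be handled with care.
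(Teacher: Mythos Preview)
Your setup in steps 1 and 2 is correct and matches the paper: the dual Selmer group for $\P'$ does vanish, and Poitou--Tate gives the short exact sequence
\[
0 \to H^1_{\P'}(\Gamma_{F,\Sigma'},\g^0) \to H^1(\Gamma_{F,\Sigma'},\g^0) \xrightarrow{\lambda} \bigoplus_{v\in\Sigma'} H^1(\Gamma_{F_v},\g^0)/L_v \to 0.
\]
The gap is in your step 4. In the case $\beta(H^1_{\P'})\subset U$ you propose to deduce $\beta(\varphi)\in U+\beta(H^1_{\P'})=U$, but there is no mechanism for this: writing $\varphi=\tilde\psi+\eta$ with $\tilde\psi\in H^1_{\P'}$, you have $\beta(\varphi)=\beta(\tilde\psi)+\beta(\eta)$, and nothing controls $\beta(\eta)$. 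The correction $\eta$ is a class \emph{not} satisfying the Selmer conditions at $\Sigma$, so $\beta(\eta)$ genuinely depends on the chosen splitting and can be anything. Your freedom in $\eta$ is only a coset of $H^1_{\P'}$, which by hypothesis maps into $U$, so it cannot be used to move $\beta(\tilde\psi)$ out of $U$.

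The ingredient you are missing is the role of the fixed class $\psi\in H^1_\P$ used to define the \v Cebotarev set $T$. The paper works not with all of $H^1(\Gamma_{F,\Sigma'},\g^0)$ but with the subspace
\[
\ker\Phi=\{\varphi\in H^1(\Gamma_{F,\Sigma'},\g^0):\varphi|_{\Gamma_{F_v}}\in L_v\text{ for all }v\in\Sigma\},
\]
on which $\beta$ is canonically defined (no splitting needed). A dimension count gives $\dim\ker\Phi=md+1$, and both $H^1_\P$ and $H^1_{\P'}$ are $md$-dimensional subspaces of $\ker\Phi$. They are distinct because $\psi\in H^1_\P$ but $\psi|_{\Gamma_{F_y}}\notin L_y$ forces $\psi\notin H^1_{\P'}$; hence $H^1_\P+H^1_{\P'}=\ker\Phi$. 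Since classes outside $\ker\Phi$ contribute nothing to the canonical $\beta$, the hypothesis $\beta(H^1(\Gamma_{F,\Sigma'}))\not\subset U$ gives $\beta(\ker\Phi)\not\subset U$; combined with $\beta(H^1_\P)\subset U$ this forces $\beta(H^1_{\P'})\not\subset U$. Your argument never invokes $\psi$, and without it there is no way to distinguish $H^1_{\P'}$ from $H^1_\P$ inside $\ker\Phi$.
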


\begin{proof}
    Consider the following commutative diagram whose rows are exact sequences (all cohomology is applied to the module $\g^0$):
    \[
    \begin{tikzcd}
    1 \ar[r] & H^1_{\P'} \ar[r] & H^1(\Gamma_{F,\Sigma'}) \ar[r,"\res"] \ar[rd,dashed,"\Phi"] & \bigoplus_{v\in \Sigma'}H^1(\Gamma_{F_v})/L_v \ar[r] \ar[d,two heads] & 1 \\
    1 \ar[r] & H^1_{\P} \ar[r] & H^1(\Gamma_{F,\Sigma}) \ar[r,"\res"] \ar[u,hook,"\inf"] & \bigoplus_{v\in \Sigma}H^1(\Gamma_{F_v})/L_v \ar[r] & 1
    \end{tikzcd}
    \]
    By the numerology,
    \[ \dim_kH^1_{\P'} = \dim_kH^1_{\P} = md, \]
    and the difference in dimensions between the right hand terms is 1.
    Notice that $H_{\P}^1$ injects into $H^1(\Gamma_{F,\Sigma'})$ and $H_{\P}^1 \subset \ker\Phi$.
    But $\ker\Phi$ is $(md+1)$-dimensional, so fix $\psi' \in \ker\Phi$ so that $\psi'$ and $H^1_\P$ span $\ker\Phi$.
    Notice that cocycles which are not in $\ker\Phi$ can have no effect on infinitesimal weights, and $\beta(H^1_\P) \subset U$.
    Since $\beta(H^1(\Gamma_{F,\Sigma'})) \not\subset U$, we must have $\beta(\psi') \not\in U$.
    Then for dimension reasons, $H_{\P'}^1$ is contained in the span of $H_{\P}^1$ and $\psi$ in $H^1(\Gamma_{F,\Sigma'})$.
    Since $\psi \not\in H_{\P'}^1$, we conclude that some element $\tilde\psi \in H_{\P'}^1$ has non-zero $\psi'$-component.
    Since all elements of $H^1_{\P}$ are assumed to have image in $U$, $\beta(\tilde\psi) \not\in U$.
\end{proof}

We can now assemble what we've done into our main theorem.

\begin{thm}\label{final-theorem}
    Let $k$ denote a finite field of characteristic $p$, $G/W(k)$ a split connected reductive group, $F$ a Galois CM number field satisfying $[F(\mu_p):F] = p-1$ with maximal totally real subextension $F^+$ for which every prime $v\mid p$ of $F^+$ splits in $F$, and $\Sigma$ a finite set of primes of $F$ containing the primes above $p$.
    Assume $\ol\rho:\Gamma_{F,\Sigma} \to G(k)$ is a continuous representation satisfying:
    \begin{enumerate}
        \item There is a subfield $k' \subset k$ such that 
        \[ \ol{(G^{\der})}^{\simpc}(k') \subset \ol\rho(\Gamma_F) \subset Z_G(k)\cdot G(k'). \]
        \item $p-1$ is greater than the maximum of $8\# Z_{(G^{\der})^{\simpc}}$ and
        \[
        \begin{cases}
            (h-1)\#Z_{(G^{\der})^{\simpc}} & \text{if $\#Z_{(G^{\der})^{\simpc}}$ is even; or} \\
            (2h-2)\#Z_{(G^{\der})^{\simpc}} & \text{if $\#Z_{(G^{\der})^{\simpc}}$ is odd.}
        \end{cases}
        \]
        \item For all places $v \in \Sigma$ not dividing $p\cdot\infty$, $\ol\rho|_{\Gamma_{F_v}}$ satisfies a liftable local deformation condition $\P_v$ with tangent space of dimension $h^0(\Gamma_{F_v},\g^0)$.
        \item For all places $v\mid p$, $\ol\rho|_{\Gamma_{F_v}}$ is nearly ordinary and non-split, such that $\alpha\circ\ol\rho|_{\Gamma_{F_v}}$ is not trivial or $\ol\kappa$ for each simple root $\alpha \in \Delta$.
    \end{enumerate}
    Then letting $r = \frac{[F:\Q]}2\dim\t^0$, there exists a representation
    \[ \rho : \Gamma_F \to G(W(k)\lb X_1,\ldots,X_r\rb) \] lifting $\ol\rho$ and such that $\rho$ is almost everywhere unramified, and nearly ordinary at all $v\mid p$.
    Further, the set of $\ol\Q_p$-points of $\rho$ which are geometric and have parallel Hodge-Tate weights has positive codimension in $\D^r$
\end{thm}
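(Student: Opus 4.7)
The plan is to assemble three ingredients already developed in the paper: \Cref{large-image}, the weight-theoretic dichotomy of \Cref{full-passage} (together with \Cref{full-geom-prop}), and the Galois-cohomological augmentation argument of Section \ref{sparse}. First I would apply \Cref{large-image}: its hypotheses coincide with (1)--(4) of our theorem, so it produces a lift
\[ \rho : \Gamma_F \to G(\O\lb X_1,\ldots,X_r\rb), \qquad r = \tfrac{[F:\Q]}2\dim\t^0, \]
which is automatically almost everywhere unramified and nearly ordinary at every $v\mid p$. Conclusions (1) and (2) of the theorem are then immediate, and only the codimension claim on geometric, parallel-weight $\ol\Q_p$-points remains.

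For the codimension claim, I would examine the infinitesimal weight map $\beta$ of \cref{weight-map}. Because $\ol\rho_v$ is non-split at every $v\mid p$, the non-split property persists off a codimension $\geq 1$ locus of $\D^r$, so \Cref{full-geom-prop} applies to any geometric, parallel-weight specialization lying in the complement. Then \Cref{full-passage} yields a dichotomy: either some $\gamma_i^{-1}(\W^{\pr})$ has positive rigid-analytic codimension in $\D^r$ (in which case the theorem follows at once), or else the image of $\beta$ on $H^1_\P(\Gamma_{F,\Sigma},\g^0)$ is contained in the proper ``parallel infinitesimal weight'' subspace $U \subset \prod_{w\mid p}k^{fd}$ consisting of tuples whose $w$- and $\ol w$-coordinates differ by $-w_0$.

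If the second (bad) alternative holds, I would use the cohomological results of Section \ref{sparse} to enlarge $\Sigma$ by a single auxiliary Ramakrishna-type prime $y$, drawn from the \v Cebotarev class $\tilde T$ of \Cref{ceb-lemma}, with local condition $\P_y = L_y^{\unr}\cap L_y^{\Ram}$. The proposition immediately preceding this theorem produces a Selmer class $\tilde\psi \in H^1_{\P'}(\Gamma_{F,\Sigma'},\g^0)$ with $\beta(\tilde\psi)\not\in U$, while the numerology $\dim_k H^1_{\P'} = \dim_k H^1_\P = r$ ensures that the new universal deformation ring is again a power series ring over $\O$ in $r$ variables. Replacing $\rho$ by this new universal deformation returns us to the dichotomy, now in its good alternative, which completes the proof.

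The hardest step is the bookkeeping around the augmenting prime $y$: one must verify simultaneously that $\ol\rho|_{\Gamma_{F_y}}$ is of Ramakrishna type with a \emph{unique} distinguished root (so that the formula for $L_y^{\Ram,\perp}$ used in the surjectivity argument for $\theta$ applies), that the dual Selmer group for $\P'$ remains zero so that the new universal deformation ring stays smooth of dimension exactly $r$, and that the newly-contributed Selmer class has $\beta$-image escaping $U$. All three are ensured by combining the large-image hypothesis of the theorem (and the remark after \Cref{large-image}) with the surjectivity of $\theta$ and with \Cref{ceb-lemma}(1), so the final assembly is essentially formal.
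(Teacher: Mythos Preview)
Your proposal follows the paper's own proof essentially verbatim: apply \Cref{large-image}, invoke the dichotomy of \Cref{full-passage} together with \Cref{full-geom-prop} and the non-split locus, and in the bad case use the Section~\ref{sparse} augmentation by a single Ramakrishna prime to escape the parallel subspace~$U$.

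The one point you gloss over is the claim that ``its hypotheses coincide with (1)--(4) of our theorem.'' They do not quite: hypothesis~(4) of \Cref{large-image} requires (REG) and (REG*), whereas hypothesis~(4) of the theorem instead asserts that $\ol\rho|_{\Gamma_{F_v}}$ is non-split and that $\alpha\circ\ol\rho|_{\Gamma_{F_v}}$ is neither trivial nor $\ol\kappa$ for each simple root~$\alpha$. The paper inserts a short argument here: non-splitness forces any $\Gamma_{F_v}$-invariants of $\g/\b$ (or $\g/\b(1)$) to live in the negative simple root spaces $\g_{-\alpha}$, and the hypothesis on $\alpha\circ\ol\rho|_{\Gamma_{F_v}}$ then kills those, giving (REG) and (REG*). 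You should include this check; once it is made, \Cref{large-image} applies and the rest of your outline goes through exactly as you describe.
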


\begin{proof}
    Notice that since $\ol\rho$ is non-split, the only possible invariants of a twist of $\g/\b$ live in negative simple root spaces.
    Then assuming $\alpha\circ\ol\rho
    _{\Gamma_{F_v}}$ is not trivial or cyclotomic, ensures $\g/\b$ and $\g/\b(1)$ have no invariants, i.e.~(REG) and (REG*) hold.
    Now use \Cref{large-image} to produce a universal deformation $\rho$. 
    If the image of \Cref{weight-map} lands in the space of parallel infinitesimal weights, then use the results of this section to create a new deformation problem $\P'$ with universal deformation $\rho'$ which still satisfies the conclusions of \Cref{large-image}, but for which the image of \Cref{weight-map} does not land entirely in the space of parallel infinitesimal weights.
    
    Thus we may assume that $\rho$ has non-parallel infinitesimal weights.
    By \Cref{full-passage}, \Cref{full-geom-prop}, and the fact that $\rho$ is non-split at all $w\mid p$ off a codimension $\geq 1$ subvariety, the space of specializations of $\rho$ which are geometric and have parallel Hodge-Tate weights has codimension $\geq 1$ in the rigid topology.
\end{proof}

\begin{rmk}
    To make the result more ``automorphic,'' perhaps we should have worked with $L$-groups instead of just reductive groups.
    Working in this generality shouldn't produce any new complications.
    The deformation theory results that we have used have analogues in \cite[\S9]{pat} which apply to $L$-groups, and the role of $\g$ in cohomology is filled by the Lie algebra of $G^\vee := (^LG)^\circ$.
\end{rmk}

\begin{rmk}
    The ``non-split'' condition was added to ensure that we only had to avoid a proper subspace of infinitesimal weight space, and not $\#W_G$ proper subspaces, which most likely span all of weight space.
    It is possible that we could remove the non-splitness condition using the techniques of \cite{fkp}.
    I might revisit this at a later time.
\end{rmk}
\section{An example}\label{s-example}

We can provide examples where the hypotheses of \Cref{mainthm} are satisfied by using the potential solution of the inverse Galois problem with local conditions.
The following is Proposition 3.2 in \cite{even2}, and is due to Moret-Bailly \cite{mb}.

\begin{prop}\label{p-igp}
    Let $G$ be a finite group, let $K/\Q$ be a finite extension, and $S$ a finite set of places of $K$.
    Let $E/K$ be an auxilary finite extension of number fields.
    For each finite place $v \in S$, let $H_v/K_v$ be a finite Galois extension together with a fixed inclusion $\phi_v:\Gal(H_v/K_v) \to G$ with image $D_v$.
    For each real infinite place $v \in S$, let $c_v \in G$ be an element of order dividing 2.
    There exists a number field $F/K$ and a finite Galois extension of number fields $L/F$ with the following properties:
    \begin{enumerate}
        \item There is an isomorphism $\Gal(L/F) = G$.
        \item $L/K$ is linearly disjoint from $E/K$.
        \item All places in $S$ split completely in $F$.
        \item For all finite places $w$ of $F$ above $v \in S$, the local extension $L_w/F_w$ is equal to $H_v/K_v$.
        Moreover there is a commutative diagram:
        \[
        \begin{tikzcd}
        \Gal(L_w/F_w) \ar[r] \ar[d,equals] & D_w \subset G \ar[d,equals] \\
        \Gal(H_v/K_v) \ar[r,"\phi_v"] & D_v \subset G
        \end{tikzcd}
        \]
        \item For all real places $w\mid \infty $ of $F$ above $v \in S$, complex conjugation $c_w \in G$ is conjugate to $c_v$.
    \end{enumerate}
\end{prop}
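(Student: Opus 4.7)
Since the statement is Moret-Bailly's theorem on realizing finite Galois groups with prescribed local behavior, the plan is to follow the proof in \cite{mb} (reformulated as \cite[Prop.~3.2]{even2}) rather than give an independent argument; let me sketch the overall architecture. The idea is to interpret the conclusion as the existence of a rational point, over some finite extension $F/K$, on a suitable $K$-variety. Concretely, one constructs a geometrically irreducible $K$-variety $Y$ together with a $G$-cover $\tilde Y \to Y$ whose generic fibre is a $G$-Galois extension; such a $Y$ can be obtained from a faithful linear representation of $G$ by taking the quotient $V/G$ (with $V$ a suitable open in a representation space), or from a regular realization of $G$ over a rational function field. A point $y \in Y(F)$ with irreducible fibre $\tilde Y_y$ then yields a Galois extension $L/F$ with $\Gal(L/F) = G$.

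The given data $(H_v,\phi_v)$ and $c_v$ cut out nonempty open subsets $\Omega_v \subset Y(K_v)$: at a finite place $v$, one uses a $K_v$-point of $Y$ lifting $H_v/K_v$ (which exists because $\Gal(H_v/K_v) = D_v \subset G$, so $H_v$ arises as a specialization of the generic $G$-extension of $Y$), and Krasner's lemma shows that all nearby $K_v$-points produce the same local extension together with the same embedding into $G$; at a real place, one uses the analytic topology on $Y(K_v)$ to cut out points whose complex conjugation is conjugate to the prescribed $c_v$. Moret-Bailly's approximation theorem then supplies a finite extension $F/K$ linearly disjoint from $E/K$, in which every $v \in S$ splits completely, together with an $F$-rational point $y \in Y(F)$ whose image in each $Y(F_w) = Y(K_v)$ lies in $\Omega_v$. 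Hilbert irreducibility, applied after shrinking $Y$ to avoid a thin set, ensures that $y$ may be chosen so that $\tilde Y_y$ is irreducible; the corresponding $L/F$ then satisfies all five conditions of the proposition.

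The main obstacle, and the substantive content, is of course Moret-Bailly's approximation theorem itself: the assertion that, given a geometrically irreducible $K$-variety with nonempty open sets of local points at places in $S$, one can find a finite extension of $K$ with the prescribed splitting and linear disjointness properties over which the variety has a global point approximating the chosen local points. Its proof uses the geometry of Hilbert schemes and a strong-approximation argument; since this is well documented in \cite{mb}, I would simply invoke it rather than reproduce it here. The only work particular to the present setting is the translation of the local Galois-theoretic data into open conditions on $Y(K_v)$, which as above is routine via Krasner's lemma.
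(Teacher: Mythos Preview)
Your sketch is a faithful outline of Moret-Bailly's argument, but note that the paper itself gives no proof of this proposition: it is simply quoted as Proposition~3.2 of \cite{even2}, attributed to \cite{mb}, and then used as a black box in the proof of Corollary~\ref{c-example}. So there is nothing to compare against; your proposal goes well beyond what the paper does, and what you have written is essentially the standard reduction to Moret-Bailly's approximation theorem as in \cite{mb}.
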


\begin{cor}\label{c-example}
    For any split connected reductive group $G/W(k)$ for which $k$ is a finite field of charicteristic $p \gg 0$, there exists a Galois CM field $F$, a set of primes $\Sigma$, and a Galois representation $\ol\rho:\Gamma_{F,\Sigma} \to G(k)$ satisfying the conditions of \Cref{mainthm}.
\end{cor}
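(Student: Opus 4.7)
The plan is to apply Moret-Bailly's inverse Galois result, \Cref{p-igp}, to produce a Galois extension $L/F$ whose Galois group is embedded in $G(k)$. Take the target finite group to be $\Gamma = G^{\der}(k)$, so that the composition $\ol\rho:\Gamma_F\twoheadrightarrow\Gal(L/F)\hookrightarrow G(k)$ automatically has the large image required by \Cref{mainthm}; one then needs only to arrange the local conditions and CM structure via the remaining inputs of \Cref{p-igp}.

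First I would choose as seed field a Galois CM extension $K/\Q$ arranged so that $[K(\mu_p):K]=p-1$ and every place of $K^+$ above $p$ splits in $K/K^+$; a suitable cyclotomic composite works. Second, for each place $v$ of $K$ above $p$ I would specify a local Galois extension $H_v/K_v$ whose image under the embedding $\Gal(H_v/K_v)\hookrightarrow G(k)$ lies in $B(k)$ and is nearly ordinary and non-split, with $\alpha\circ\ol\rho|_{\Gamma_{K_v}}$ neither trivial nor $\ol\kappa$ for each simple root $\alpha\in\Delta$. Concretely, one prescribes the restriction of this image to inertia so that its diagonal part is a sufficiently generic tame character into $T(k)$, while its unipotent part projects to a non-zero class in $H^1(\Gamma_{K_v},\g_\alpha)$ for every $\alpha\in\Delta$. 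At a finite set of auxiliary places not above $p$ or $\infty$, I would additionally prescribe local extensions realizing liftable deformation conditions whose tangent space has dimension $h^0(\Gamma_{K_v},\g^0)$; at unramified places where $\ol\rho(\fr_v)$ is regular semisimple the unramified condition suffices. The complex conjugation images $c_v$ at real places of $K^+$ are prescribed to match the intended CM structure.

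Applying \Cref{p-igp} produces $F/K$ and $L/F$ with the specified local behavior, and by taking the auxiliary field $E$ in that proposition to contain $K(\mu_p)$ together with the Galois closure of $K/\Q$, one ensures $[F(\mu_p):F]=p-1$ and preserves the CM structure of $K$; since every place in $S$ splits completely in $F/K$, the splitting of primes above $p$ in $F/F^+$ is inherited from $K/K^+$. To make $F/\Q$ itself Galois, one replaces $F$ by its Galois closure over $\Q$: the local properties above $S$ are preserved because these places split completely, and the CM structure persists because complex conjugation fixes $K$ and hence acts compatibly on the Galois closure.

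The main obstacle is the explicit local construction at $v\mid p$: one must exhibit an extension $H_v/K_v$ with Galois image inside $B(k)$, non-trivial projection onto each simple-root component of $H^1(\Gamma_{K_v},\g_\alpha)$, and a diagonal character avoiding the finitely many forbidden values along each simple-root slot. For $p\gg 0$ there is ample room to satisfy these constraints simultaneously, since the space of possible tame diagonal characters grows with $p$ while the forbidden set stays bounded, and the liftability plus tangent-space dimension count for the auxiliary places follows from the nearly-ordinary computations of \Cref{ss-nearly-ord-defs} together with standard Ramakrishna-type analysis.
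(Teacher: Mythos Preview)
Both you and the paper invoke Moret-Bailly (\Cref{p-igp}), but your setup introduces a real problem. You apply \Cref{p-igp} with seed field a Galois CM field $K$; the output $F/K$ is then totally imaginary, but an arbitrary finite extension of a CM field need not be CM, and your assertion that ``the CM structure of $K$ persists'' is not justified. Your reference to ``complex conjugation images $c_v$ at real places of $K^+$'' is also inconsistent with applying \Cref{p-igp} over $K$, which has no real places. The paper avoids this by taking $K=\Q$ and $S=\{p,\infty\}$: the real place of $\Q$ then splits completely in the output field $F^+$, forcing $F^+$ to be totally real, and one sets $F:=E\cdot F^+$ for a quadratic imaginary $E$ in which $p$ splits (chosen as the auxiliary field in \Cref{p-igp}, so $L$ is linearly disjoint from it); this $F$ is CM by construction.

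There is a second gap: passing to the Galois closure $\tilde F/\Q$ restricts $\ol\rho$ to $\Gamma_{\tilde F}$, and the image $\Gal(L\tilde F/\tilde F)$ may be a proper subgroup of $G^{\der}(k)$, breaking condition (1) of \Cref{mainthm}. (In fact the paper does not make $F$ Galois either; as remarked in \S\ref{weights}, that hypothesis is only for bookkeeping.) Finally, your direct construction of the local representation at $v\mid p$ inside $B(k)$ is plausible but only sketched, and one must still clear obstructions along the unipotent filtration. The paper's device is cleaner and entirely explicit: compose the principal $\SL_2$ homomorphism $r:\SL_2\to G$ with a non-split extension of $\F_p$ by $\F_p(r)$ for some $r\neq 0,1$; the image lands in $B(k)$ and contains a regular unipotent element, which yields non-splitness and the condition $\alpha\circ\ol\rho|_{\Gamma_{F_v}}=\ol\kappa^{\,r}\neq 1,\ol\kappa$ for every simple root $\alpha$ at once. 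The auxiliary places you introduce away from $p$ are unnecessary: after a further base change one arranges $\ol\rho$ to be unramified outside $p$, so condition (2) becomes vacuous.
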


\begin{proof}
    In the notation of \Cref{p-igp}, let $G = G(k)$, and $K = \Q$, $S = \{p,\infty\}$.
    We will choose to avoid a quadratic imaginary extension $E/\Q$ in which $p$ splits.
    Let $c_\infty \in G(k)$ be any order 2 element.
    To apply \Cref{p-igp} it remains to specify an extension $H_p/\Q_p$.
    
    Fix a pinning of $G$ and let $r:\SL_2 \to G$ denote the principle $\SL_2$ with respect to this pinning (see \cite{gross}).
    Let $\varrho:\Gamma_{\Q_p} \to \SL_2(k)$ be a non-split extension of $\F_p$ by $\F_p(r)$ for $r \neq 0,1$.
    Such an extension exists, since 
    \begin{itemize}
        \item $h^0(\Gamma_{\Q_p},\F_p(r)) = 0$ for $r\neq 0$,
        \item $h^2(\Gamma_{\Q_p},\F_p(r)) = h^0(\Gamma_{\Q_p},\F_p(1-r)) = 0$ for $r\neq 1$ (by local duality), and
        \item $h^1(\Gamma_{\Q_p},\F_p(r)) = h^0(\Gamma_{\Q_p},\F_p(r)) + h^2(\Gamma_{\Q_p},\F_p(r)) + 1 = 1$ for $r\neq 0,1$ (by the local Euler characteristic formula).
    \end{itemize}
    Now let $\phi_p:\Gamma_{\Q_p} \to G(k)$ be given by $r \circ \varrho$, and let $H_p$ denote the fixed field of $\ker\phi_p$, so that $\phi_p$ induces an inclusion $\phi_p:\Gal(H_p/\Q_p) \to G(k)$.
    
    Then by \Cref{p-igp}, there exists
    \begin{itemize}
        \item a (totally real) number field $F^+$ such that all primes above $p$ split,
        \item an extension $L/F^+$ such that $L/\Q$ is linearly disjoint from $E/\Q$, and
        \item an isomorphism $\Gal(L/F^+) \cong G(k)$
    \end{itemize}
    satisfying the conclusions of \Cref{p-igp}.
    Let $F = E\cdot F^+$, and let 
    \[ \ol\rho: \Gamma_F \twoheadrightarrow \Gal(LE/F) \cong \Gal(L/F^+) \cong G(k).\]
    After a possibly further base change we may assume that $\ol\rho$ is unramified away from $p$.
    Let $\Sigma$ denote the set of primes above $p$ ($p$ is totally split in $F$), and note that $\ol\rho$ factors through $\Gamma_{F,\Sigma}$.
    Since $\ol\rho$ is surjective, it satisfies (1) of \Cref{mainthm}.
    Since $\Sigma$ contains no primes $v\nmid p\cdot \infty$, (2) of \Cref{mainthm} is vacuously true.
    Finally, for each $v\mid p$, $\ol\rho|_{\Gamma_{F_v}} = \phi_p$.
    Then $\ol\rho|_{\Gamma_{F_v}}$ is ordinary for the Borel $B$ of our fixed pinning of $G$, and for any simple root $\alpha$, $\alpha\circ \ol\rho|_{\Gamma_{F_v}} = \ol\kappa^r$ (and $r\neq 0,1$).
    Further, since the image of $\varrho$ contains a non-trivial unipotent element, the image of $\ol\rho|_{\Gamma_{F_v}}$ contains a regular unipotent element, in particular $\ol\rho|_{\Gamma_{F_v}}$ is non-split.
    Thus $\ol\rho$ satisfies (3) of \Cref{mainthm}.
\end{proof}
\newpage
\bibliographystyle{alpha}
\bibliography{refs}

\begin{thebibliography}{BLGGT14}

\bibitem[All19]{allen}
Patrick~B. Allen.
\newblock On automorphic points in polarized deformation rings.
\newblock {\em Amer. J. Math.}, 141(1):119--167, 2019.

\bibitem[B\"01]{bockle}
Gebhard B\"{o}ckle.
\newblock On the density of modular points in universal deformation spaces.
\newblock {\em Amer. J. Math.}, 123(5):985--1007, 2001.

\bibitem[BG14]{bg}
Kevin Buzzard and Toby Gee.
\newblock The conjectural connections between automorphic representations and
  {G}alois representations.
\newblock In {\em Automorphic forms and {G}alois representations. {V}ol. 1},
  volume 414 of {\em London Math. Soc. Lecture Note Ser.}, pages 135--187.
  Cambridge Univ. Press, Cambridge, 2014.

\bibitem[BK90]{bk}
Spencer Bloch and Kazuya Kato.
\newblock {$L$}-functions and {T}amagawa numbers of motives.
\newblock In {\em The {G}rothendieck {F}estschrift, {V}ol. {I}}, volume~86 of
  {\em Progr. Math.}, pages 333--400. Birkh\"{a}user Boston, Boston, MA, 1990.

\bibitem[BLGGT14]{BLGGT}
Thomas Barnet-Lamb, Toby Gee, David Geraghty, and Richard Taylor.
\newblock Potential automorphy and change of weight.
\newblock {\em Ann. of Math. (2)}, 179(2):501--609, 2014.

\bibitem[Buz04]{buzzard}
Kevin Buzzard.
\newblock On {$p$}-adic families of automorphic forms.
\newblock In {\em Modular curves and abelian varieties}, volume 224 of {\em
  Progr. Math.}, pages 23--44. Birkh\"{a}user, Basel, 2004.

\bibitem[BW00]{bw}
A.~Borel and N.~Wallach.
\newblock {\em Continuous cohomology, discrete subgroups, and representations
  of reductive groups}, volume~67 of {\em Mathematical Surveys and Monographs}.
\newblock American Mathematical Society, Providence, RI, second edition, 2000.

\bibitem[Cal11]{even1}
Frank Calegari.
\newblock Even {G}alois representations and the {F}ontaine-{M}azur conjecture.
\newblock {\em Invent. Math.}, 185(1):1--16, 2011.

\bibitem[Cal12]{even2}
Frank Calegari.
\newblock Even {G}alois representations and the {F}ontaine--{M}azur conjecture.
  {II}.
\newblock {\em J. Amer. Math. Soc.}, 25(2):533--554, 2012.

\bibitem[Car93]{carter}
Roger~W. Carter.
\newblock {\em Finite groups of {L}ie type}.
\newblock Wiley Classics Library. John Wiley \& Sons, Ltd., Chichester, 1993.
\newblock Conjugacy classes and complex characters, Reprint of the 1985
  original, A Wiley-Interscience Publication.

\bibitem[CE09]{ce}
Frank Calegari and Matthew Emerton.
\newblock Bounds for multiplicities of unitary representations of cohomological
  type in spaces of cusp forms.
\newblock {\em Ann. of Math. (2)}, 170(3):1437--1446, 2009.

\bibitem[CG18]{cg}
Frank Calegari and David Geraghty.
\newblock Modularity lifting beyond the {T}aylor-{W}iles method.
\newblock {\em Invent. Math.}, 211(1):297--433, 2018.

\bibitem[CHT08]{CHT}
Laurent Clozel, Michael Harris, and Richard Taylor.
\newblock Automorphy for some {$l$}-adic lifts of automorphic mod {$l$}
  {G}alois representations.
\newblock {\em Publ. Math. Inst. Hautes \'Etudes Sci.}, (108):1--181, 2008.
\newblock With Appendix A, summarizing unpublished work of Russ Mann, and
  Appendix B by Marie-France Vign\'eras.

\bibitem[CM09]{cm}
Frank Calegari and Barry Mazur.
\newblock Nearly ordinary {G}alois deformations over arbitrary number fields.
\newblock {\em J. Inst. Math. Jussieu}, 8(1):99--177, 2009.

\bibitem[dJ95]{de-jong}
A.~J. de~Jong.
\newblock Crystalline {D}ieudonn\'{e} module theory via formal and rigid
  geometry.
\newblock {\em Inst. Hautes \'{E}tudes Sci. Publ. Math.}, (82):5--96 (1996),
  1995.

\bibitem[DMOS82]{dm}
Pierre Deligne, James~S. Milne, Arthur Ogus, and Kuang-yen Shih.
\newblock {\em Hodge cycles, motives, and {S}himura varieties}, volume 900 of
  {\em Lecture Notes in Mathematics}.
\newblock Springer-Verlag, Berlin-New York, 1982.

\bibitem[FKP19a]{fkp2}
Najmuddin {Fakhruddin}, Chandrashekhar {Khare}, and Stefan {Patrikis}.
\newblock {Lifting $G$-irreducible but $\mathrm{GL}_n$-reducible Galois
  representations}.
\newblock {\em arXiv e-prints}, page arXiv:1908.07929, Aug 2019.

\bibitem[FKP19b]{fkp}
Najmuddin {Fakhruddin}, Chandrashekhar {Khare}, and Stefan {Patrikis}.
\newblock {Relative deformation theory and lifting irreducible Galois
  representations}.
\newblock {\em arXiv e-prints}, page arXiv:1904.02374, Apr 2019.

\bibitem[FM95]{fm}
Jean-Marc Fontaine and Barry Mazur.
\newblock Geometric {G}alois representations.
\newblock In {\em Elliptic curves, modular forms, \& {F}ermat's last theorem
  ({H}ong {K}ong, 1993)}, Ser. Number Theory, I, pages 41--78. Int. Press,
  Cambridge, MA, 1995.

\bibitem[GM98]{gm}
Fernando~Q. Gouv\^{e}a and Barry Mazur.
\newblock On the density of modular representations.
\newblock In {\em Computational perspectives on number theory ({C}hicago, {IL},
  1995)}, volume~7 of {\em AMS/IP Stud. Adv. Math.}, pages 127--142. Amer.
  Math. Soc., Providence, RI, 1998.

\bibitem[Gro97]{gross}
Benedict~H. Gross.
\newblock On the motive of {$G$} and the principal homomorphism {${\rm
  SL}_2\to\widehat G$}.
\newblock {\em Asian J. Math.}, 1(1):208--213, 1997.

\bibitem[Har87]{harder}
G.~Harder.
\newblock Eisenstein cohomology of arithmetic groups. {T}he case {${\rm
  GL}_2$}.
\newblock {\em Invent. Math.}, 89(1):37--118, 1987.

\bibitem[HLTT16]{hltt}
Michael Harris, Kai-Wen Lan, Richard Taylor, and Jack Thorne.
\newblock On the rigid cohomology of certain {S}himura varieties.
\newblock {\em Res. Math. Sci.}, 3:Paper No. 37, 308, 2016.

\bibitem[Kna02]{knapp}
Anthony~W. Knapp.
\newblock {\em Lie groups beyond an introduction}, volume 140 of {\em Progress
  in Mathematics}.
\newblock Birkh\"{a}user Boston, Inc., Boston, MA, second edition, 2002.

\bibitem[KW09a]{kw1}
Chandrashekhar Khare and Jean-Pierre Wintenberger.
\newblock Serre's modularity conjecture. {I}.
\newblock {\em Invent. Math.}, 178(3):485--504, 2009.

\bibitem[KW09b]{kw2}
Chandrashekhar Khare and Jean-Pierre Wintenberger.
\newblock Serre's modularity conjecture. {II}.
\newblock {\em Invent. Math.}, 178(3):505--586, 2009.

\bibitem[Loe11]{loeffler}
David Loeffler.
\newblock Density of classical points in eigenvarieties.
\newblock {\em Math. Res. Lett.}, 18(5):983--990, 2011.

\bibitem[Maz89]{mazur}
B.~Mazur.
\newblock Deforming {G}alois representations.
\newblock In {\em Galois groups over {${\bf Q}$} ({B}erkeley, {CA}, 1987)},
  volume~16 of {\em Math. Sci. Res. Inst. Publ.}, pages 385--437. Springer, New
  York, 1989.

\bibitem[MB90]{mb}
Laurent Moret-Bailly.
\newblock Extensions de corps globaux \`a ramification et groupe de {G}alois
  donn\'{e}s.
\newblock {\em C. R. Acad. Sci. Paris S\'{e}r. I Math.}, 311(6):273--276, 1990.

\bibitem[Nek93]{nek}
Jan Nekov\'{a}\v{r}.
\newblock On {$p$}-adic height pairings.
\newblock In {\em S\'{e}minaire de {T}h\'{e}orie des {N}ombres, {P}aris,
  1990--91}, volume 108 of {\em Progr. Math.}, pages 127--202. Birkh\"{a}user
  Boston, Boston, MA, 1993.

\bibitem[Pat16]{pat}
Stefan Patrikis.
\newblock Deformations of {G}alois representations and exceptional monodromy.
\newblock {\em Invent. Math.}, 205(2):269--336, 2016.

\bibitem[Pat17a]{pat2}
Stefan Patrikis.
\newblock Deformations of {G}alois representations and exceptional monodromy,
  {II}: raising the level.
\newblock {\em Math. Ann.}, 368(3-4):1465--1491, 2017.

\bibitem[Pat17b]{pat-ks}
Stefan Patrikis.
\newblock Generalized {K}uga-{S}atake theory and good reduction properties of
  {G}alois representations.
\newblock {\em Algebra Number Theory}, 11(10):2397--2423, 2017.

\bibitem[Ram99]{r-lift}
Ravi Ramakrishna.
\newblock Lifting {G}alois representations.
\newblock {\em Invent. Math.}, 138(3):537--562, 1999.

\bibitem[Ram02]{r-def}
Ravi Ramakrishna.
\newblock Deforming {G}alois representations and the conjectures of {S}erre and
  {F}ontaine-{M}azur.
\newblock {\em Ann. of Math. (2)}, 156(1):115--154, 2002.

\bibitem[Sch15]{scholze}
Peter Scholze.
\newblock On torsion in the cohomology of locally symmetric varieties.
\newblock {\em Ann. of Math. (2)}, 182(3):945--1066, 2015.

\bibitem[Ser87]{serre}
Jean-Pierre Serre.
\newblock Sur les repr\'{e}sentations modulaires de degr\'{e} {$2$} de {${\rm
  Gal}(\overline{\bf Q}/{\bf Q})$}.
\newblock {\em Duke Math. J.}, 54(1):179--230, 1987.

\bibitem[Ser19]{vlad}
Vlad Serban.
\newblock A finiteness result for $p$-adic families of bianchi modular forms,
  2019.

\bibitem[Tay88]{taylor}
Richard~Lawrence Taylor.
\newblock {\em On congruences between modular forms}.
\newblock ProQuest LLC, Ann Arbor, MI, 1988.
\newblock Thesis (Ph.D.)--Princeton University.

\bibitem[Tay03]{taylor-icos}
Richard Taylor.
\newblock On icosahedral {A}rtin representations. {II}.
\newblock {\em Amer. J. Math.}, 125(3):549--566, 2003.

\end{thebibliography}
\end{document}